\theoremstyle{definition}
\newtheorem{theorem}{Theorem}[section]
\newtheorem{corollary}[theorem]{Corollary}
\newtheorem{lemma}[theorem]{Lemma}
\newtheorem{remark}[theorem]{Remark}
\newtheorem{proposition}[theorem]{Proposition}
\newtheorem{definition}[theorem]{Definition}
\newtheorem{example}[theorem]{Example}
\newcommand{\arrowlines}{%
\begin{tikzpicture}
  \draw[thick] (0,0) -- (-.1,-.1);
  \draw[thick] (0,0) -- (.1,-.1);
\end{tikzpicture}%
}
\newcommand{\ckpicture}{
\begin{tikzpicture}
\draw[thick] (2,2) circle (.5cm);
\node at (1.5,2) {\arrowlines};
\draw[fill=black] (1.62,2.33) circle (.08cm);
\node at (1.3,2.7) {$k$};
\end{tikzpicture}}
\newcommand{\cktildepicture}{
\begin{tikzpicture}
\draw[thick] (2,2) circle (.5cm);
\node[rotate = 180] at (1.5,2) {\arrowlines};
\draw[fill=black] (1.62,2.33) circle (.08cm);
\node at (1.3,2.7) {$k$};
\end{tikzpicture}}
\definecolor{lightblue}{RGB}{240,240,255}
\definecolor{lightred}{RGB}{255,240,240}
\newcommand{\omitt}[1]{}
\newcommand{\tonytodo}{\todo[inline,color=pink!20]}
\title{Khovanov's Heisenberg category, moments in free probability, and shifted symmetric functions}
\date{}
\author{Henry Kvinge, Anthony M. Licata\thanks{AML was supported by a Discovery Project grant from the Australian Research Council.}, and Stuart Mitchell}
\begin{document}
\maketitle
\begin{abstract}
We establish an isomorphism between the center $\Hcenter$ of the Heisenberg category defined by Khovanov in \cite{Kho14} and the algebra $\ShiftSym{}$ of shifted symmetric functions defined by Okounkov-Olshanski in \cite{OO97}.  We give a graphical description of the shifted power and Schur bases of $\ShiftSym{}$ as elements of $\Hcenter$, and describe the curl generators of $\Hcenter$ in the language of shifted symmetric functions.  This latter description makes use of the transition and co-transition measures of Kerov \cite{Ker93} and the noncommutative probability spaces of Biane \cite{B98}.
  \end{abstract}

\tableofcontents

%


\section{Introduction}
In \cite{Kho14}, Khovanov introduces a graphical calculus of oriented planar diagrams and uses it to define a linear monoidal category $\Heisencat$, which he proposes as a categorification of the Heisenberg algebra. 
We denote by $\Hcenter$ the endormophism algebra of the monoidal unit in $\Heisencat$.  The commutative algebra 
$\Hcenter$ is, by definition, the algebra of closed oriented planar diagrams modulo the relations of the Khovanov graphical calculus.  In his study of morphism spaces of $\Heisencat$, Khovanov introduces two sets of generators for $\Hcenter$: the clockwise curls $\{c_k\}_{k \geq 0}$ and the counterclockwise curls $\{\ctilde{k}\}_{k\geq 2}$.  He then establishes algebra isomorphisms
$$
	\Hcenter \cong \MB{C}[c_0,c_1,c_2, \dots] \cong \MB{C}[\ctilde{2},\ctilde{3},\ctilde{4}, \dots],
$$
and describes a recursion for expressing the clockwise and counterclockwise curls in terms of each other.
He then relates $\Heisencat$ to representation theory by defining a sequence of monoidal functors $\htobimod{k}$ from $\Heisencat$ to bimodule categories for symmetric groups.  A consequence of the existence of these functors is the existence of surjective algebra homomorphisms,
$$
	f_n^{\Heisencat} : \Hcenter \longrightarrow  Z(\MB{C}[\Sy{n}]),
$$
from $\Hcenter$ to the center of the group algebra of each symmetric group.  Based in part on this, Khovanov suggests that there should be a close connection between $\Hcenter$ and the asymptotic representation theory of symmetric groups.  Furthermore, one might hope that $\Hcenter$ in fact gives a diagrammatic description of some algebra of pre-existing combinatorial interest.  

The main goal of the current paper is to make precise the connection between $\Hcenter$ and both the asymptotic representation theory of symmetric groups and algebraic combinatorics.  We do this by establishing an isomorphism between 
$$
	\primaryiso : \Hcenter \longrightarrow \ShiftSym{},
$$
where $\ShiftSym{}$ is the {\emph{shifted symmetric functions}} of Okounkov-Olshanski \cite{OO97}.  (See Theorem \ref{thm-main-1}.)  The algebra of shifted symmetric functions $\ShiftSym{}$ is a deformation of the algebra of symmetric functions.  As is the case for $\Hcenter$, there are surjective algebra homomorphisms
$$
	f_n^{\ShiftSym{}}: \ShiftSym{} \longrightarrow Z(\MB{C}[\Sy{n}]),
$$
to the center of the group algebra of each symmetric group. The isomorphism $\primaryiso : \Hcenter \longrightarrow \ShiftSym{}$ is canonical, in that it intertwines the homomorphisms $ f_n^{\Heisencat} $ and $f_n^{\ShiftSym{}}$.

The isomorphism $\primaryiso:\Hcenter \longrightarrow \ShiftSym{}$ allows us to give a graphical description of several important bases of $\ShiftSym{}$.  For example, the shifted power sum denoted $p_\lambda^\#$ in \cite{OO97} appears in $\Hcenter$ as the closure of a permutation of cycle type $\lambda$.  The shifted Schur function $s_\lambda^*$ appears as the closure of a Young symmetrizer of type $\lambda$.  (See Theorem \ref{thm:schur}).

In the other direction, it is also reasonable to ask for a description of the image of Khovanov's curl generators $c_k$ and $\ctilde{k}$ as elements of $\ShiftSym{}$.  It turns out that the right language for such a description is that of noncommutative  probability theory.  In \cite{Ker93}, Kerov introduces, for each partition $\lambda$, a pair of finitely supported probability measures on $\MB{R}$; these probability measures are known as the \emph{ transition} and {\emph{co-transition} measures, or sometimes as growth and decay.  In work of Biane \cite{B98}, these probability measures appear as the compactly-supported measures associated to self-adjoint operators on a noncommutative probability space, and as a result they are basic objects of interest at the intersection of representation theory and noncommutative probability theory.  In particular, the \emph {moments} and \emph{Boolean cumulants} of the transition and co-transition measures may be regarded as elements of $\ShiftSym{}$.  In Theorem \ref{thm-moment-images}, we show that the isomorphism $\primaryiso$ takes Khovanov's curl generators $c_k$ and $\ctilde{k}$ to scalar multiples of the $k$th moments of Kerov's transition and co-transition measures.  In fact, the close relationship between the transition and co-transition measures themselves yields two independent descriptions of the image of the curl generator $c_k$: it is equal to a scalar multiple of both the $k$th moment of the co-transition measure and the $(k+2)$th Boolean cumulant of the transition measure.  The observation that the Boolean cumulants of the transition measure are equal to the moments of the co-transition measure seems to be new, and is closely connected to the adjointness of induction and restriction functors between representation categories of symmetric groups.  A dictionary between several of the bases of $\Hcenter$ and $\ShiftSym{}$ is given in Table \ref{i-class-table} below.  

\begin{table}
\begin{center}
{\tabulinesep=1.2mm
 \begin{tabular}{| m{2.4cm} |  m{5cm} |}
 \hline
$ \ShiftSym{}$ & diagram in $\Hcenter$   \\
 \hline \hline
  $\shiftpwr{\lambda}$ & \begin{tikzpicture}[scale = .75]

\node at (-2.5,0) {};
\node at (0,2) {};
\draw[thick] (0,0) circle (1.85 cm);
\draw[thick] (0,0) circle (1.6 cm);
\draw[thick] (0,0) circle (1 cm);

\draw[fill=white,thick] (-2,-.3) rectangle (-.5,.3);

\node at (-1,.5) {$\cdot$};
\node at (-1.15,.55) {$\cdot$};
\node at (-1.3,.6) {$\cdot$};

\node at (-1.3,0) {\small{$\lambda$}};

\node[rotate = 180] at (1,0) {\arrowlines};
\node[rotate = 180] at (1.6,0) {\arrowlines};
\node[rotate = 180] at (1.85,0) {\arrowlines};

\node at (-3.6,0) {};

\end{tikzpicture}  \\
\hline
 $\shiftschur{\lambda}$ & \begin{tikzpicture}[scale = .75]

\node at (-2.5,0) {};
\node at (0,2) {};
\draw[thick] (0,0) circle (1.85 cm);
\draw[thick] (0,0) circle (1.6 cm);
\draw[thick] (0,0) circle (1 cm);
\node at (0,-1.7) {};

\draw[fill=white,thick] (-2,-.3) rectangle (-.5,.3);

\node at (-1,.5) {$\cdot$};
\node at (-1.15,.55) {$\cdot$};
\node at (-1.3,.6) {$\cdot$};

\node at (-3,-.3) {$\dim \simplerep{\lambda}$};
\node at (-3,.3) {$1$};

\node at (-1.3,0) {\scriptsize{\text{$\idempotent{\lambda}$}}};

\draw[thick] (-3.8,0) -- (-2.3,0);

\node[rotate = 180] at (1,0) {\arrowlines};
\node[rotate = 180] at (1.6,0) {\arrowlines};
\node[rotate = 180] at (1.85,0) {\arrowlines};

\end{tikzpicture}
   \\
    \hline
 $\homogenshift{k}$ & \begin{tikzpicture}[scale = .75]

\node at (-2.5,0) {};
\node at (0,2) {};
\draw[thick] (0,0) circle (1.85 cm);
\draw[thick] (0,0) circle (1.6 cm);
\draw[thick] (0,0) circle (1 cm);

\draw[fill=white,thick] (-2,-.3) rectangle (-.5,.3);


\node at (-1,.5) {$\cdot$};
\node at (-1.15,.55) {$\cdot$};
\node at (-1.3,.6) {$\cdot$};

\node at (-1.3,0) {\scriptsize{\text{$\idempotent{(k)}$}}};

\node[rotate = 180] at (1,0) {\arrowlines};
\node[rotate = 180] at (1.6,0) {\arrowlines};
\node[rotate = 180] at (1.85,0) {\arrowlines};

\node at (-3.6,0) {};

\end{tikzpicture}   \\
 \hline
 $\elementaryshift{k}$ & \begin{tikzpicture}[scale = .75]

\node at (-2.5,0) {};
\node at (0,2) {};
\draw[thick] (0,0) circle (1.85 cm);
\draw[thick] (0,0) circle (1.6 cm);
\draw[thick] (0,0) circle (1 cm);


\draw[fill=white,thick] (-2,-.3) rectangle (-.5,.3);

\node at (-1,.5) {$\cdot$};
\node at (-1.15,.55) {$\cdot$};
\node at (-1.3,.6) {$\cdot$};

\node at (-1.3,0) {\scriptsize{\text{$\idempotent{(1^k)}$}}};

\node[rotate = 180] at (1,0) {\arrowlines};
\node[rotate = 180] at (1.6,0) {\arrowlines};
\node[rotate = 180] at (1.85,0) {\arrowlines};

\node at (-3.6,0) {};

\end{tikzpicture}  \\
 \hline
 $\moment{k}$ & \quad\quad\quad\quad\quad\cktildepicture  \\
  \hline
 $\Boolean{k+2} = \shiftpwr{1}\comoment{k}$ & \quad\quad\quad\quad\quad\ckpicture  \\
\hline
\end{tabular}}
\end{center}
\caption{\label{i-class-table} A dictionary between $\ShiftSym{}$ and diagrams in $\Hcenter$.}
\label{node-classification}
\end{table}

The existence of a relationship between $\Heisencat$ and free probability -- and indeed, much of this paper -- was anticipated by Khovanov in \cite{Kho14}.  The relationship between generators of $\Hcenter$ and the noncommutative probability spaces of \cite{B98} may be seen as a further manifestation of the ``planar structure" of free probability; the many connections between noncommutative probability and other mathematical subjects with planar structure are emphasized in the work of Guionnet, Jones and Shlyakhtenko \cite{GJS10}.  

In addition to the center of $\Heisencat$, another algebra of interest in the study of $\Heisencat$ is its trace (or zeroth Hochschild homology).  The trace of $\Heisencat$ is an infinite-dimensional noncommutative algebra, which may be defined diagrammatically as the algebra of diagrams on an annulus;  the trace acts naturally on $\Hcenter$ by gluing annular diagrams around planar ones.  In \cite{CLLS15}, the trace of $\Heisencat$ is shown to be isomorphic to the $W_{1+\infty}$ algebra of conformal field theory.  An action  of $W_{1+\infty}$ on $\ShiftSym{}$ appears to be well known in the vertex algebra community, and such an action is constructed explicitly in the work of Lascoux-Thibon \cite{LT01}.  Thus the isomorphism $\primaryiso: \Hcenter\longrightarrow \ShiftSym{}$ of Theorem \ref{thm-main-1}, together with the main result of \cite{CLLS15}, gives a purely planar realization -- via Khovanov's graphical calculus -- of Lascoux-Thibon's construction.

\vspace{-3mm}

\subsection{Acknowledgements}

The authors would like to thank Ben Elias, Alexander Ellis, Sara Billey, Eugene Gorsky, Aaron Lauda, Carson Rogers, and Alistair Savage for helpful conversations. We would also like to thank Monica Vazirani for her valuable comments after a careful reading of an earlier draft of this paper. H.K. would like to thank Mikhail Khovanov for his suggestion to look for a relationship between the Heisenberg category and the combinatorics of symmetric functions.

\section{The symmetric group and its normalized character theory} \label{sect-sym-group}


We begin by establishing notation related to partitions and Young diagrams. Let $\partitionsn{n}$ be the set of partitions of $n$ and 
\begin{equation*}
\partitionsn{} := \bigcup_{n \geq 0} \partitionsn{n}. 
\end{equation*}
For this section let $\lambda = (\lambda_1, \lambda_2, \dots, \lambda_r) \in \partitionsn{n}$ and $\mu = (\mu_1, \dots, \mu_t) \in \partitionsn{k}$ with $n \geq k$. We assume that $\lambda_1 \geq \dots \geq \lambda_r > 0$ and $\mu_1 \geq \dots \geq \mu_t > 0$. When $i > r$ (respectively $i > t$) we then understand $\lambda_i = 0$ (resp. $\mu_i = 0$).
We use the following notation throughout:
\begin{itemize}
\omitt{
\item $\partsofpartition{s}{\lambda}$ is the number of parts of $\lambda$ equal to $s$. We can then also write $\lambda = (1^{\partsofpartition{1}{\lambda}}, 2^{\partsofpartition{2}{\lambda}}, \dots)$. }
\item $n = \lambda_1 + \lambda_2 + \dots + \lambda_r =: |\lambda|$. 
\item $\lambda \cup \mu$ is the partition formed from the union of the parts of $\lambda$ and $\mu$. 
\item $\mu \subseteq \lambda$ if $\mu_i \leq \lambda_i$ for all $i \geq 1$. When this is the case, we write $\lambda / \mu$ for the associated skew diagram.
\item $\partembedding{k}{n}: \partitionsn{k} \hookrightarrow \partitionsn{n}$ is the function defined by $\partembedding{k}{n}(\mu) = \mu \cup 1^{n-k} \in \partitionsn{n}$. 
\omitt{
\item $\nounity{\lambda}$ is the partition obtained by removing all parts of size 1 from $\lambda$. Thus $\partsofpartition{1}{\nounity{\lambda}} = 0$ and $\partsofpartition{s}{\nounity{\lambda}} = \partsofpartition{s}{\lambda}$ for $s \geq 2$.
}
\end{itemize}

\begin{example}
If $\mu = (3,2,1,1,1) \in \partitionsn{8}$ then $\partembedding{8}{10}(\mu) = (3,2,1,1,1,1,1) \in \partitionsn{10}$.
\end{example} 

\omitt{
and $\nounity{\mu} = (3,2)$.
\end{example}
}

\omitt{
Given a set of indeterminants $\{x_i \}_{i \geq 0}$ we set
\begin{equation*} 
x^{\mu} := x_0^{\partsofpartition{1}{\mu}} x_1^{\partsofpartition{2}{\mu}} \dots x_{r-1}^{\partsofpartition{r}{\mu}}.
\end{equation*}
}

We freely identify $\mu \in \partitions$ with its corresponding Young diagram, which we draw using Russian notation (see Example \ref{example-interlacing-sequences}). If $\ydcell$ is a cell in the $i$th row and $j$th column of $\mu$ then the {\emph{content}} of $\ydcell$ is defined as
\begin{equation*}
\content{\ydcell} := j-i.
\end{equation*}
We say that a cell $\ydcell \notin \mu$ is $i$-addable with respect to $\mu$ if it has content $i$ and adding it to $\mu$ gives a Young diagram. We say that a cell $\ydcell \in \mu$ is $i$-removable with respect to $\mu$ if it has content $i$ and removing it from $\mu$ gives a Young diagram. We call two sequences $\interlacingx{1}, \dots, \interlacingx{d}$ and $\interlacingy{1}, \dots, \interlacingy{d-1}$ {\emph{interlacing}} when 
\begin{equation*}
\interlacingx{1} < \interlacingy{1} < \interlacingx{2} < \dots < \interlacingx{d-1} < \interlacingy{d-1} < \interlacingx{d}. 
\end{equation*}
The \emph{center} of this pair of sequences is defined as the quantity $(\interlacingx{1} + \dots + \interlacingx{d}) - (\interlacingy{1} + \dots + \interlacingy{d-1})$. Each Young diagram $\mu$ uniquely defines two integer valued interlacing sequences $\interlacingx{1}, \dots, \interlacingx{d}$ and $\interlacingy{1}, \dots, \interlacingy{d-1}$ where:
\begin{itemize}
\item $\interlacingx{1}, \dots, \interlacingx{d}$ is the ordered list of all $\interlacingx{j}$ such that there exists an $\interlacingx{j}$-addable cell with respect to $\mu$. 
\item $\interlacingy{1}, \dots, \interlacingy{d-1}$ is the ordered list of all $\interlacingy{j}$ such that there exists a $\interlacingy{j}$-removable cell with respect to $\mu$.
\end{itemize}
From this description it is clear that $\interlacingx{1}, \dots, \interlacingx{d}$ and $\interlacingy{1}, \dots, \interlacingy{d-1}$ are interlacing. 

\begin{example} \label{example-interlacing-sequences}
Let $\mu = (4,2,1)$. Then $\mu$ yields the interlacing sequences
\begin{equation*}
{\color{red}{-3}} < {\color{red}{-1}} < {\color{red}{1}} < {\color{red}{4}} \quad\quad \text{and} \quad\quad {\color{blue}{-2}} < {\color{blue}{0}} < {\color{blue}{3}}.
\end{equation*}
\begin{center}
\begin{tikzpicture}

\vspace{3mm}

\node at (2.8,2.3) {\color{red}{\scriptsize{$\interlacingx{4}$}}};
\node at (2.1,2.3) {\color{blue}{\scriptsize{$\interlacingy{3}$}}};
\node at (.7,2.3) {\color{red}{\scriptsize{$\interlacingx{3}$}}};
\node at (0,2.3) {\color{blue}{\scriptsize{$\interlacingy{2}$}}};
\node at (-.7,2.3) {\color{red}{\scriptsize{$\interlacingx{2}$}}};
\node at (-1.4,2.3) {\color{blue}{\scriptsize{$\interlacingy{1}$}}};
\node at (-2.1,2.3) {\color{red}{\scriptsize{$\interlacingx{1}$}}};

\draw (-4,-2.15) -- (4,-2.15);

\node at (2.72,-2.3) {\scriptsize{$4$}};
\node at (2.05,-2.3) {\scriptsize{$3$}};
\node at (1.37,-2.3) {\scriptsize{$2$}};
\node at (.7,-2.3) {\scriptsize{$1$}};
\node at (0,-2.3) {\scriptsize{$0$}};
\node at (-.78,-2.3) {\scriptsize{$-1$}};
\node at (-1.48,-2.3) {\scriptsize{$-2$}};
\node at (-2.15,-2.3) {\scriptsize{$-3$}};

\node at (0,0) {\begin{tikzpicture}[rotate = 45, scale = 1.2]

\draw (0,0) rectangle (.8,.8);
\draw (.8,0) rectangle (1.6,.8);
\draw (1.6,0) rectangle (2.4,.8);
\draw (2.4,0) rectangle (3.2,.8);

\draw (0,.8) rectangle (.8,1.6);
\draw (.8,.8) rectangle (1.6,1.6);

\draw (0,1.6) rectangle (.8,2.4);

\draw[->] (0,0) -- (0,5);
\draw[->] (0,0) -- (5,0);

\draw[dotted] (0,0) -- (2.4,2.4);
\draw[dotted] (-.4,.4) -- (2,2.8);
\draw[dotted] (-.8,.8) -- (1.6,3.2);
\draw[dotted] (-1.2,1.2) -- (1.2,3.6);

\draw[dotted] (.4,-.4) -- (2.8,2);
\draw[dotted] (.8,-.8) -- (3.2,1.6);
\draw[dotted] (1.2,-1.2) -- (3.6,1.2);
\draw[dotted] (1.6,-1.6) -- (4,.8);

\end{tikzpicture}};
\end{tikzpicture}
\end{center}
\end{example}

\begin{proposition} \cite{Ker00}
If $\interlacingx{1}, \dots, \interlacingx{d}$ and $\interlacingy{1}, \dots, \interlacingy{d-1}$ are the pair of interlacing sequences associated to a Young diagram then their center is 0. Conversely, any pair of integer valued interlacing sequences with center 0 are associated to a Young diagram.
\end{proposition}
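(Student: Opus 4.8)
\emph{Proof proposal.} The plan is to translate both halves of the statement into a single property of the boundary profile of $\mu$ in Russian notation. Recall that this profile is the continuous piecewise-linear function $\omega_\mu \colon \MB{R}\to\MB{R}$ whose graph is the upper boundary of the region occupied by $\mu$: it has slopes $\pm 1$, it equals $|t|$ for $|t|$ large, and its local minima occur exactly at the addable contents $\interlacingx{1}<\dots<\interlacingx{d}$ while its local maxima occur exactly at the removable contents $\interlacingy{1}<\dots<\interlacingy{d-1}$. Since the slope sequence alternates and begins and ends with $-1$ and $+1$ respectively, the extreme corners $\interlacingx{1}$ and $\interlacingx{d}$ are both minima; in particular $\omega_\mu$ has slope $-1$ on $(-\infty,\interlacingx{1}]$ and slope $+1$ on $[\interlacingx{d},\infty)$, so that $\omega_\mu(\interlacingx{1})=-\interlacingx{1}$ and $\omega_\mu(\interlacingx{d})=\interlacingx{d}$.

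For the forward direction I would compute the net rise $\omega_\mu(\interlacingx{d})-\omega_\mu(\interlacingx{1})$ in two ways. By the previous paragraph it equals $\interlacingx{1}+\interlacingx{d}$. On the other hand, summing (slope)$\times$(length) over the alternating up-segments $[\interlacingx{i},\interlacingy{i}]$ and down-segments $[\interlacingy{i},\interlacingx{i+1}]$ gives
\begin{equation*}
\sum_{i=1}^{d-1}(\interlacingy{i}-\interlacingx{i})-\sum_{i=1}^{d-1}(\interlacingx{i+1}-\interlacingy{i})
= \interlacingx{1}+\interlacingx{d}-2\Big(\sum_{j=1}^{d}\interlacingx{j}-\sum_{i=1}^{d-1}\interlacingy{i}\Big).
\end{equation*}
Equating the two expressions forces $\sum_{j}\interlacingx{j}=\sum_{i}\interlacingy{i}$, i.e. the center vanishes. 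This direction is thus a one-line telescoping computation.

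For the converse, suppose we are given interlacing integers $\interlacingx{1}<\interlacingy{1}<\dots<\interlacingx{d}$ of center $0$. First I would record the sign constraint $\interlacingx{1}\le 0\le\interlacingx{d}$: feeding the interlacing inequalities $\interlacingy{i}<\interlacingx{i+1}$ and $\interlacingy{i}>\interlacingx{i}$ into $\sum_j\interlacingx{j}=\sum_i\interlacingy{i}$ yields $0>\interlacingx{1}$ and $0<\interlacingx{d}$ when $d\ge 2$, while $d=1$ forces $\interlacingx{1}=0$. I then define $\omega$ to be the continuous piecewise-linear function with slope $-1$ left of $\interlacingx{1}$, alternating slopes through the prescribed minima and maxima, slope $+1$ right of $\interlacingx{d}$, and normalized by $\omega(t)=-t$ for $t\le\interlacingx{1}$. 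Running the displayed computation backwards with center $0$ gives $\omega(\interlacingx{d})=\interlacingx{d}$, so $\omega(t)=|t|$ outside $[\interlacingx{1},\interlacingx{d}]$; moreover all corners lie on the integer lattice because the contents are integers and the segment lengths are integer differences.

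The one substantive point — and the step I expect to be the main obstacle — is verifying that the reconstructed $\omega$ lies weakly above the diagonal, since only then does it bound an honest (nonnegative) Young diagram. I would establish $\omega(t)\ge|t|$ by studying $g:=\omega-|t|$: for $t<0$ one has $g'=\omega'+1\in\{0,2\}\ge 0$, so $g$ is nondecreasing and hence $g\ge g(\interlacingx{1})=0$, while for $t>0$ one has $g'=\omega'-1\in\{-2,0\}\le 0$, so $g$ is nonincreasing and hence $g\ge g(\interlacingx{d})=0$; this is exactly where $\interlacingx{1}\le 0\le\interlacingx{d}$ enters, guaranteeing $g(\interlacingx{1})=g(\interlacingx{d})=0$. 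Consequently $\omega\ge|t|$ everywhere, and being a slope-$\pm1$ profile with integer corners that equals $|t|$ near infinity, $\omega$ is the Russian profile of a unique Young diagram $\mu$ whose addable and removable contents are precisely the given sequences (the standard bijection between Young diagrams and their profiles, cf. \cite{Ker00}). This completes the plan.
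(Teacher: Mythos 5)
The paper does not actually prove this proposition --- it is stated with a citation to Kerov \cite{Ker00} --- so there is no in-paper argument to compare against. Your proof is correct and complete: the telescoping computation of $\omega_\mu(\interlacingx{d})-\omega_\mu(\interlacingx{1})$ gives the forward direction, and for the converse you correctly isolate the two points that need checking, namely $\interlacingx{1}\le 0\le \interlacingx{d}$ (which you derive from center $0$ plus the interlacing inequalities) and $\omega\ge |t|$ (via the monotonicity of $\omega-|t|$ on each half-line). This is essentially the standard profile argument from \cite{Ker00}; the only detail you wave at is that the reconstructed corners lie on the correct (diagonal) lattice, i.e.\ that $t+\omega(t)$ is even at each corner, but this follows immediately from $\omega(\interlacingx{1})=-\interlacingx{1}$ and the fact that each slope-$\pm1$ segment changes $t+\omega$ by an even integer.
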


When $\mu \subseteq \lambda$ and $\lambda / \mu = \ydcell$, then we write $\mu \nearrow \lambda$. In other words, $\mu \nearrow \lambda$ whenever we can obtain $\lambda$ from $\mu$ by adding a single cell. If $\interlacingx{1}, \dots, \interlacingx{d}$ and $\interlacingy{1}, \dots, \interlacingy{d-1}$ are the interlacing sequences associated to $\mu$, then we denote by $\mu^{(i)}$ the Young diagram that we get by adding a cell of content $\interlacingx{i}$, so that
\begin{equation*}
\content{\mu^{(i)} / \mu} = \interlacingx{i}.
\end{equation*}
Similarly, we denote by $\mu_{(i)}$ the Young diagram that we get by removing a cell of content $\interlacingy{i}$ from $\mu$, so that \begin{equation*}
\content{\mu / \mu_{(i)}} = \interlacingy{i}.
\end{equation*}
Note that $\mu_{(i)} \nearrow \mu$, while $\mu \nearrow \mu^{(i)}$.

\begin{example}
If $\mu = (4,2,1)$ as in Example \ref{example-interlacing-sequences}, we have
\begin{equation}
\begin{array}{lcl}
\color{red}{\mu^{(1)}} & = & (4,2,1,1) \\
\color{red}{\mu^{(2)}} & = & (4,2,2) \\
\color{red}{\mu^{(3)}} & = & (4,3,1) \\
\color{red}{\mu^{(4)}} & = & (5,2,1) \\
\end{array}
\quad\quad \text{and} \quad\quad
\begin{array}{lcl}
\color{blue}{\mu_{(1)}} & = & (4,2) \\
\color{blue}{\mu_{(2)}} & = & (4,1,1) \\
\color{blue}{\mu_{(3)}} & = & (3,2,1). \\
\end{array}
\end{equation}
\end{example}

Let $\Sy{n}$ be the symmetric group. $\Sy{n}$ is generated by Coxeter generators $s_1, \dots, s_{n-1}$ where $s_i$ is the adjacent transposition $(i, i+1)$. We identify $\MB{C}[\Sy{0}] \cong \MB{C}$. If $g \in \Sy{n}$ has cycle type $\lambda \vdash n$, then we write $\shape{g} := \lambda$. For $k \leq n$, there is an embedding $\Sy{k} \hookrightarrow \Sy{n}$ called the {\emph{standard embedding}} which sends $\Sy{k}$ to the subgroup generated by $s_1, \dots, s_{k-1}$, which stabilizes $\{k+1,\dots,n\}$ pointwise. We extend this embedding by linearity to get an embedding of group algebras which we denote by $\symembedding{k}{n}: \MB{C}[\Sy{k}] \hookrightarrow \MB{C}[\Sy{n}]$. We write $1_k$ for the identity element in $\MB{C}[\Sy{k}]$ so that $\symembedding{k}{n}(1_k) = 1_n$. We write $\longestelement{n}$ for the longest element of $\Sy{n}$.

For $\lambda \vdash n$, let $\simplerep{\lambda}$ be the simple $\MB{C}[\Sy{n}]$-module corresponding to $\lambda$, $\Youngidempotent{\lambda}$ its associated Young idempotent, and $\charrep{\lambda}: \MB{C}[\Sy{n}] \rightarrow \MB{C}$ its associated character. Abusing notation, we write $\charrep{\lambda}(\mu)$ for $\charrep{\lambda}(g)$ when $\shape{g} = \mu$ (this notation is well-defined since $\charrep{\lambda}$ is a class function). The {\emph{normalized character}} $\norcharrep{\lambda}: \bigoplus_{k \leq n} \MB{C}[\Sy{k}] \rightarrow \MB{C}$ associated to $\lambda$ is defined so that for $x \in \MB{C}[\Sy{k}]$,
\begin{equation} \label{eqn-char-map}
\norcharrep{\lambda}(x) := \frac{\charrep{\lambda}(\symembedding{k}{n}(x))}{\dim \simplerep{\lambda}} = \frac{\charrep{\lambda}(\symembedding{k}{n}(x))}{\charrep{\lambda}(1_n)}.
\end{equation}

Let $\mu = (\mu_1, \dots, \mu_t) \vdash k \leq n$ and set $\partitioncycle{\mu} = 1_k$ if $\mu = (1^k)$ and otherwise
\begin{equation*}
\partitioncycle{\mu} =  \Big(s_{k-1}\dots s_{k-\mu_t+1}\Big) \dots \Big(s_{\mu_1+\mu_2-1} \dots s_{\mu_1+1}\Big) \dots \Big(s_{\mu_1-1} \dots s_{2}s_1\Big)
\end{equation*}
\begin{equation*}
= (k, k-1, \dots, k - \mu_t +1)(\mu_1 + \mu_2, \dots, \mu_1 +1)\dots(\mu_1, \dots,2,1) \in \Sy{k}.
\end{equation*}
We define
\begin{equation*}
\partitioncyclen{\mu}{n} := \longestelement{n}^{-1} (\symembedding{k}{n}(\partitioncycle{\mu})) \longestelement{n} \in \Sy{n}.
\end{equation*}
Observe that $\partitioncyclen{\mu}{n}$ has cycle type $\partembedding{k}{n}(\mu)$ and fixes $1, 2, \dots, n-k$ pointwise.

\begin{example}
Let $\mu = (3,2) \vdash 5$, then
\begin{equation*}
\partitioncycle{\mu} = (s_4)(s_2s_1) = (5,4)(3,2,1)
\end{equation*}
and we see that $\shape{\partitioncycle{\mu}} = \mu$. For $n = 8$,
\begin{equation*}
\partitioncyclen{\mu}{8} = s_4s_6s_7 = (4,5)(6,7,8),
\end{equation*}
while for $n = 10$,
\begin{equation*}
\partitioncyclen{\mu}{10} = (6,7)(8,9,10).
\end{equation*}
\end{example}

The elements
\begin{equation*}
\{1_n,\partitioncyclen{(2)}{n},\partitioncyclen{(3)}{n}, \dots, \partitioncyclen{(n)}{n} \} = \{1_n, \;s_{n-1}, \; s_{n-2}s_{n-1},\; \dots, \;s_1s_2\dots s_{n-1}\}
\end{equation*}
are the minimal length left coset representatives of $\Sy{n-1}$ in $\Sy{n}$. We extend this observation in the following lemma.

\begin{lemma} \label{lemma-cosetreps} For $k < n$, the elements of the set
\begin{equation*}
\{\partitioncyclen{(i_n)}{n}\partitioncyclen{(i_{n-1})}{n-1} \dots \partitioncyclen{(i_{k+1})}{k+1} \; | \; 1 \leq i_j \leq j \; \}
\end{equation*}
are the minimal length left coset representatives of $\Sy{k}$ in $\Sy{n}$. We denote this set by $\speclcos{n}{k}$.
\end{lemma}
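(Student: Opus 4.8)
The plan is to recognize $\Sy{k}$ as the standard parabolic subgroup $\langle s_1,\dots,s_{k-1}\rangle$ of $\Sy{n}$ and to build the minimal-length representatives for the tower $\Sy{k}\le\Sy{k+1}\le\cdots\le\Sy{n}$ one step at a time, using the standard theory of parabolic cosets in a Coxeter group. Throughout, $\ell(w)$ denotes the Coxeter length (number of inversions) of $w\in\Sy{n}$, and I recall the basic characterization that $w$ is the minimal-length representative of its left coset $w\Sy{m}$ precisely when $\ell(ws_i)>\ell(w)$ for all $i<m$, equivalently when $w(1)<\cdots<w(m)$.

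First I would record the one-step base case at \emph{every} level of the tower, not just the top level treated in the text. For each $m$ the identity $\partitioncyclen{(i)}{m}=s_{m-i+1}s_{m-i+2}\cdots s_{m-1}$ shows that $\partitioncyclen{(i)}{m}$ is the cycle $(m-i+1,\dots,m)$: it sends $m\mapsto m-i+1$, is increasing on $1,\dots,m-1$, and has $\ell(\partitioncyclen{(i)}{m})=i-1$ with the displayed word reduced. Since $m-i+1$ runs over all of $1,\dots,m$ as $i$ runs over $1,\dots,m$, and since the left cosets of $\Sy{m-1}=\mathrm{Stab}(m)$ are distinguished exactly by the image of $m$, the set $\{\partitioncyclen{(i)}{m}:1\le i\le m\}$ is a complete, irredundant system of minimal-length representatives for the left cosets of $\Sy{m-1}$ in $\Sy{m}$. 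This is the $k=m-1$ instance of the lemma.

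Next I would pass from one step to the whole tower by induction on $n-k$, the inductive step being exactly the transitivity of parabolic coset decompositions. Write an arbitrary $w\in\Sy{n}$ in its parabolic form $w=v\,u$, where $v$ is the minimal-length representative of the left coset $w\Sy{n-1}$ and $u\in\Sy{n-1}$; the length-additivity $\ell(w)=\ell(v)+\ell(u)$ of this factorization gives, for every $s\in\{s_1,\dots,s_{k-1}\}\subseteq\Sy{n-1}$, the identity $\ell(ws)=\ell(v)+\ell(us)$ (apply the same factorization to $ws=v\,(us)$, noting $us\in\Sy{n-1}$). Hence $\ell(ws)>\ell(w)$ if and only if $\ell(us)>\ell(u)$, so $w$ is a minimal representative of $w\Sy{k}$ in $\Sy{n}$ if and only if $u$ is a minimal representative of $u\Sy{k}$ in $\Sy{n-1}$. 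By the base case $v=\partitioncyclen{(i_n)}{n}$ for a unique $i_n\in\{1,\dots,n\}$, and by the inductive hypothesis $u$ is a unique element of $\speclcos{n-1}{k}$; multiplying these realizes exactly the set $\speclcos{n}{k}$, each element obtained once.

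The step carrying the real content—and requiring the most care—is the length-additivity of the parabolic factorization together with its uniqueness; these are precisely what turn a list of products into an irredundant system of \emph{minimal}-length representatives. I would either cite the standard parabolic-decomposition theorem for Coxeter groups (each $w$ factors uniquely as $w=v\,u$ with $v$ minimal in $w\Sy{n-1}$, $u\in\Sy{n-1}$, and $\ell(w)=\ell(v)+\ell(u)$) or deduce the single additivity fact I need from the exchange/deletion property. As sanity checks I would confirm that $|\speclcos{n}{k}|=\prod_{j=k+1}^{n}j=n!/k!=[\Sy{n}:\Sy{k}]$ and that each product has length $\sum_{j=k+1}^{n}(i_j-1)$. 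A secondary caution is bookkeeping of the coset convention: the larger-group factor sits on the left and the $\Sy{k+1}$-factor on the right, matching the order in which representatives peel off down the chain $\Sy{n}\ge\cdots\ge\Sy{k}$.
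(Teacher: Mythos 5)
Your proof is correct. The paper states this lemma without proof, presenting it as an extension of the displayed one-step observation that $\{1_n,\partitioncyclen{(2)}{n},\dots,\partitioncyclen{(n)}{n}\}$ are the minimal coset representatives of $\Sy{n-1}$ in $\Sy{n}$; your induction down the tower via the length-additive parabolic factorization $w=vu$ is exactly the natural way to carry out that extension, and the details you supply (the identification $\partitioncyclen{(i)}{m}=s_{m-i+1}\cdots s_{m-1}$ as the cycle $(m-i+1,\dots,m)$, the characterization of minimal representatives of $w\Sy{m}$ by $w(1)<\cdots<w(m)$, and the cardinality check $n!/k!=\fallingfactorial{n}{n-k}$) all check out.
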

\omitt{
\tonytodo{Maybe this should be a Lemma?}
}
We note that $|\speclcos{n}{k}| = \fallingfactorial{n}{n-k}$, where the \emph{falling factorial power} is defined as
\begin{equation*}
\fallingfactorial{x}{k} = \begin{cases}
x(x-1)\dots (x-k+1), & \text{if $k = 1,2, \dots$} \\
1, & \text{if $k = 0$}.
\end{cases}
\end{equation*}

\begin{example}
We have
\begin{equation*}
\speclcos{4}{3} = \{\;1_4,\;{\color{red}{s_3}},\;{\color{red}{s_2s_3}},\;{\color{red}{s_1s_2s_3}} \},
\end{equation*}
\begin{equation*}
\speclcos{3}{2} = \{\;1_3,\;{\color{blue}{s_2}},\;{\color{blue}{s_1s_2}}\},
\end{equation*}
and
\begin{flalign*}
\speclcos{4}{2} =  \{\;&1_4,\;{\color{red}{s_3}},\;{\color{red}{s_2s_3}},\;{\color{red}{s_1s_2s_3}},\\ 
&{\color{blue}{s_2}},\;{\color{red}{s_3}}{\color{blue}{s_2}},\;{\color{red}{s_2s_3}}{\color{blue}{s_2}}, \; {\color{red}{s_1s_2s_3}}{\color{blue}{s_2}}, \\
&{\color{blue}{s_1s_2}}, \; {\color{red}{s_3}}{\color{blue}{s_1s_2}}, \; {\color{red}{s_2s_3}}{\color{blue}{s_1s_2}}, \; {\color{red}{s_1s_2s_3}}{\color{blue}{s_1s_2}} \}.
\end{flalign*}
\end{example}

\subsection{The center of $\MB{C}[\Sy{n}]$}

For $\mu \vdash k \leq n$, set
\begin{equation*}
\congclasssum{\mu}{n} := \sum_{\substack{g \in \Sy{n}, \\  \shape{g} = \partembedding{k}{n}(\mu)}} g.
\end{equation*}
The elements $\{\congclasssum{\mu}{n}\}_{\mu \vdash n}$ are a basis for the center of the symmetric group algebra, $Z(\MB{C}[\Sy{n}])$. We write $z_{\mu,n}$ for the size of the centralizer of an element in $\Sy{n}$ with cycle type $\partembedding{k}{n}(\mu)$. Note that when $\mu \vdash n$, then $z_{\mu,n} = z_{\mu}$.

\begin{definition} \label{def-identity-of-a}
For $\mu = (\mu_1, \dots, \mu_t) \vdash k \leq n$, set
\begin{equation} 
\classsum{\mu}{n} := \sum_{g \in \speclcos{n}{n-k}} g \partitioncyclen{\mu}{n} g^{-1}.
\end{equation}
We call $\classsum{\mu}{n}$ the {\emph{normalized conjugacy class sum}} associated to $\mu$ in $\MB{C}[\Sy{n}]$.
\end{definition}

Alternatively, $\classsum{\mu}{n}$ may be written as
\begin{equation} \label{eqn-alt-a}
\classsum{\mu}{n} = \sum (i_1,\dots, i_{\mu_1}) \dots (i_{k-\mu_t +1}, \dots, i_k)
\end{equation}
where this sum is taken over all distinct $k$-tuples $(i_1, \dots, i_k)$ of elements from $\{1, 2, \dots, n\}$. From \eqref{eqn-alt-a} an easy counting argument shows that
\begin{equation} \label{eqn-identity-of-a}
\classsum{\mu}{n} = \frac{z_{\mu,n}}{(n-k)!} \congclasssum{\mu}{n}.
\end{equation}
It follows from \eqref{eqn-identity-of-a} that $\classsum{\mu}{n} \in Z(\MB{C}[\Sy{n}])$.

\begin{example} \label{ex-single-cycle}
Let $k \leq n$. When $\mu = (k) \vdash k$, then $z_{(k),n} = k(n-k)!$ so that
\begin{equation*}
\classsum{(k)}{n} = k\congclasssum{(k)}{n}.
\end{equation*} 
\end{example} 
 
The elements $\classsum{\mu}{n}$ are important in the study of the asymptotic character theory of symmetric groups \cite{KO94}. They also appear in connection with the algebra of partial permutations \cite{IK99}. If $\mu \vdash k \leq n$ and $\lambda \vdash n$ then
 \begin{equation} \label{prop-normalized-char-class-sum}
 \norcharrep{\lambda}(\classsum{\mu}{n}) = (n \downharpoonright k)\frac{\chi^{\lambda}(\mu)}{\dim\simplerep{\lambda}}.
 \end{equation}
\omitt{
\begin{proof}
A simple counting argument shows that,
\begin{equation*}
\norcharrep{\lambda}(\congclasssum{\mu}{n}) = \frac{n!\charrep{\lambda}(\mu)}{z_{\mu,n}\dim \simplerep{\lambda}}.
\end{equation*}
The result then follows from \eqref{eqn-identity-of-a}.
\end{proof}
}
The following is well-known.

\begin{proposition} \label{prop-normchar-is-homomorphism}
When restricted to $Z(\MB{C}[\Sy{n}])$, the normalized character $\norcharrep{\lambda}$ is an algebra homomorphism from $Z(\MB{C}[\Sy{n}])$ to $\MB{C}$.
\end{proposition}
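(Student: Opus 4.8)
The plan is to recognize $\norcharrep{\lambda}$, restricted to $Z(\MB{C}[\Sy{n}])$, as the central character of the irreducible module $\simplerep{\lambda}$, and to deduce the homomorphism property from Schur's lemma. Note first that for $z \in Z(\MB{C}[\Sy{n}])$ the index $k$ in \eqref{eqn-char-map} equals $n$, so that $\symembedding{n}{n}$ is the identity and $\norcharrep{\lambda}(z) = \charrep{\lambda}(z)/\dim\simplerep{\lambda}$; it is the multiplicativity of this restricted map that must be established.

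First I would let $\rho_\lambda : \MB{C}[\Sy{n}] \to \operatorname{End}(\simplerep{\lambda})$ denote the algebra homomorphism afforded by the simple module $\simplerep{\lambda}$, so that $\charrep{\lambda}(x) = \operatorname{tr}\rho_\lambda(x)$. Since $\MB{C}$ is algebraically closed, $\simplerep{\lambda}$ is absolutely irreducible, and any element of $Z(\MB{C}[\Sy{n}])$ commutes with every operator in the image of $\rho_\lambda$. Schur's lemma then forces $\rho_\lambda(z)$ to be a scalar operator for each central $z$; write $\rho_\lambda(z) = \omega_\lambda(z)\,\mathrm{id}_{\simplerep{\lambda}}$. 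Because $\rho_\lambda$ is an algebra homomorphism and scalar operators multiply by multiplying their scalars, the assignment $z \mapsto \omega_\lambda(z)$ is multiplicative; since $\rho_\lambda(1_n) = \mathrm{id}_{\simplerep{\lambda}}$ it is also unital. Thus $\omega_\lambda : Z(\MB{C}[\Sy{n}]) \to \MB{C}$ is an algebra homomorphism.

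It remains to identify $\omega_\lambda$ with $\norcharrep{\lambda}$. Taking the trace of $\rho_\lambda(z) = \omega_\lambda(z)\,\mathrm{id}_{\simplerep{\lambda}}$ gives $\charrep{\lambda}(z) = \omega_\lambda(z)\dim\simplerep{\lambda}$, whence $\omega_\lambda(z) = \charrep{\lambda}(z)/\dim\simplerep{\lambda} = \norcharrep{\lambda}(z)$, and the claim follows. The argument carries no genuine difficulty---its entire content is Schur's lemma---but two points deserve emphasis. The restriction to the center is essential: on all of $\MB{C}[\Sy{n}]$ the normalized character is merely linear, and multiplicativity can fail since noncentral elements need not act as scalars. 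Moreover, the normalization by $\dim\simplerep{\lambda}$ is precisely what converts the additive trace back into the multiplicative scalar $\omega_\lambda$, so the dimension factor in \eqref{eqn-char-map} is exactly what makes $\norcharrep{\lambda}$ a ring homomorphism rather than only an additive map.
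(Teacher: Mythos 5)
Your proof is correct and follows essentially the same route as the paper's (omitted) argument: central elements act as scalars on the simple module $\simplerep{\lambda}$ by Schur's lemma, and taking traces identifies that scalar with $\norcharrep{\lambda}$, from which multiplicativity is immediate. Nothing further is needed.
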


\omitt{
\begin{proof}
Recall that $\simplerep{\lambda}$ is the simple $\MB{C}[\Sy{n}]$-module associated to $\lambda$. Since $\congclasssum{\mu}{n} \in Z(\MB{C}[\Sy{n}])$, it acts as a scalar on $\simplerep{\lambda}$. It can be checked that for any nonzero $v \in \simplerep{\lambda}$,
\begin{equation*}
\congclasssum{\mu}{n}v = \norcharrep{\lambda}(\mu)v.
\end{equation*}
The result follows.
\end{proof}
}

$Z(\MB{C}[\Sy{n}])$ is also generated by symmetric polynomials in the Jucys-Murphy elements \newline$\{\JM{i}\}_{1 \leq i \leq n} \subseteq \MB{C}[\Sy{n}]$, where
\begin{equation*}
\JM{1} = 0, \quad\quad \text{and} \quad\quad \JM{k} = (1,k) + (2,k) + \dots + (k-1,k), \;\;\;\;\; 2 \leq k \leq n.
\end{equation*}
We can also write
\begin{equation} \label{eqn-alternative-JM-pres}
\JM{k} = \sum_{i = 1}^{k-1} s_i \dots s_{k-2} s_{k-1} s_{k-2} \dots s_i.
\end{equation}


\subsection{The transition measure and co-transition measure} \label{sect-transition}

In this section we recall the notion of transition and co-transition measures, also known as growth and decay, respectively. Assume that $\lambda \vdash n$ and let $\interlacingx{1}, \dots, \interlacingx{d}$ and $\interlacingy{1}, \dots, \interlacingy{d-1}$ be the interlacing sequences associated to $\lambda$. Recall that $\lambda^{(1)}, \dots, \lambda^{(d)}$ are the partitions of $n+1$ such that $\content{\lambda^{(i)} / \lambda} = \interlacingx{i}$, while $\lambda_{(1)}, \dots, \lambda_{(d-1)}$ are the partitions of $n-1$ such that $\content{\lambda / \lambda_{(i)}} = \interlacingy{i}$.

For $1 \leq i \leq d$, the {\emph{transition probabilities}} for $\lambda$ are defined as
\begin{equation*}
\transitionprob{\lambda}{\lambda^{(i)}} := 
\frac{\dim(\simplerep{\lambda^{(i)}})}{(n+1)\dim(\simplerep{\lambda})}. 
\end{equation*}
The {\emph{transition measure}} $\transition{\lambda}$ is then the probability measure on $\MB{R}$ defined by
\begin{equation} \label{eqn-transition-measure-def}
\transition{\lambda} := \sum_{i=1}^d \transitionprob{\lambda}{\lambda^{(i)}} \delta_{\interlacingx{i}}
\end{equation}
where $\delta_{\interlacingx{i}}$ is the Dirac delta measure with support on $\interlacingx{i} \in \MB{R}$. Dually, for $1 \leq i \leq d-1$ the {\emph{co-transition probabilities}} of $\lambda$ are
\begin{equation*}
\cotransitionprob{\lambda}{\lambda_{(i)}} :=
\frac{\dim(\simplerep{\lambda_{(i)}})}{\dim(\simplerep{\lambda})}
\end{equation*}
and the {\emph{co-transition measure}} $\cotransition{\lambda}$ is 
\begin{equation} \label{eqn-cotransition-measure}
\cotransition{\lambda} := \sum_{i=1}^{d-1} \cotransitionprob{\lambda}{\lambda_{(i)}} \delta_{\interlacingy{i}}.
\end{equation}
These probability measures were first investigated by Kerov (\cite{Ker93}, \cite{Ker00}). They are fundamental tools in the study of the asymptotic representation theory of symmetric groups and in the connection between asymptotic representation theory and free probability. 

\omitt{The interlacing sequence approach to Young diagrams is particularly well-suited to study of transition and co-transition measures. For example, Kerov shows in \cite{Ker00} that,
\begin{equation} \label{eqn-interlacing-and-tran-prob}
\sum_{i =1}^d \frac{\transitionprob{\lambda}{\lambda^{(i)}}}{z - \interlacingx{i}} = \frac{(z-\interlacingy{1}) \dots (z - \interlacingy{d-1})}{(z-\interlacingx{1}) \dots (z - \interlacingx{d-1})(z - \interlacingx{d})} 
\end{equation}
and
\begin{equation} \label{eqn-interlacing-and-cotran-prob}
z - |\lambda|\sum_{i =1}^{d-1} \frac{\cotransitionprob{\lambda}{\lambda_{(i)}}}{z - \interlacingy{i}} = \frac{(z-\interlacingx{1}) \dots (z - \interlacingx{d-1})(z - \interlacingx{d})}{(z-\interlacingy{1}) \dots (z - \interlacingy{d-1})}.
\end{equation}
}

The $k$th moment associated to the transition measure $\transition{\lambda}$ is given by
\begin{equation*}
\moment{k}(\lambda) = \sum_{i = 1}^d \interlacingx{i}^k\transitionprob{\lambda}{\lambda^{(i)}} 
\end{equation*}
while the $k$th moment associated to the co-transition measure $\cotransition{\lambda}$ is given by
\begin{equation*}
\comoment{k}(\lambda) = \sum_{i = 1}^{d-1} \interlacingy{i}^k\cotransitionprob{\lambda}{\lambda_{(i)}}.
\end{equation*}


We write the moment generating series for the transition measure (resp. co-transition measure) as 
\begin{equation*}
\tranmomentseries{\lambda} := \sum_{k = 0}^\infty \moment{k}(\lambda) z^{-k-1} \quad \text{and} \quad \cotranmomentseries{\lambda} := z - \sum_{k = 0}^\infty |\lambda|\comoment{k}(\lambda) z^{-k-1}. 
\end{equation*}
Note that we scale all coefficients of $\cotranmomentseries{\lambda}$ by $|\lambda|$ with the exception of the coefficient on $z$.

\begin{lemma} \label{lemma-momentseries-interlacing}
For $\lambda \in \partitions$
\begin{equation} \label{eqn-tranmoment-gen-series-eqty}
\tranmomentseries{\lambda} = (\cotranmomentseries{\lambda})^{-1}.
\end{equation}
\omitt{
and $\interlacingx{1}, \dots, \interlacingx{d}$ and $\interlacingy{1}, \dots, \interlacingy{d-1}$ be the interlacing sequences associated with $\lambda$. Then
\begin{equation} \label{eqn-tranmoment-gen-series-eqty}
\tranmomentseries{\lambda} = \frac{(z-\interlacingy{1})\dots (z-\interlacingy{d-1})}{(z-\interlacingx{1}) \dots (z - \interlacingx{d-1})(z - \interlacingx{d})} = (\cotranmomentseries{\lambda})^{-1}.
\end{equation}
} 
\end{lemma}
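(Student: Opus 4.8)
The plan is to recognize each moment generating series as the Stieltjes (Cauchy) transform of its measure, rewrite both as honest rational functions of $z$, and then observe that the two rational functions are literally reciprocal.

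First I would sum the geometric series defining the two series. Interchanging the two (formal) summations gives
\[
\tranmomentseries{\lambda} = \sum_{i=1}^d \transitionprob{\lambda}{\lambda^{(i)}} \sum_{k\ge 0}\interlacingx{i}^{\,k} z^{-k-1} = \sum_{i=1}^d \frac{\transitionprob{\lambda}{\lambda^{(i)}}}{z-\interlacingx{i}},
\]
and likewise, using $\comoment{k}(\lambda)=\sum_i \interlacingy{i}^{\,k}\cotransitionprob{\lambda}{\lambda_{(i)}}$,
\[
\cotranmomentseries{\lambda} = z - |\lambda|\sum_{i=1}^{d-1}\frac{\cotransitionprob{\lambda}{\lambda_{(i)}}}{z-\interlacingy{i}}.
\]
Thus both are elements of $\MB{C}(z)$ expanded at $z=\infty$, with simple poles at the addable, respectively removable, contents of $\lambda$.

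Next I would identify these with the ratio of the two monic interlacing polynomials $P(z):=\prod_{i=1}^d(z-\interlacingx{i})$ and $Q(z):=\prod_{j=1}^{d-1}(z-\interlacingy{j})$; that is, I would prove $\tranmomentseries{\lambda}=Q(z)/P(z)$ and $\cotranmomentseries{\lambda}=P(z)/Q(z)$, after which the lemma is immediate since $(Q/P)(P/Q)=1$. For the first identity, $Q/P$ is a proper rational function with simple poles at the $\interlacingx{i}$, so it suffices to match residues: $\operatorname{Res}_{z=\interlacingx{i}}Q/P = Q(\interlacingx{i})/P'(\interlacingx{i}) = \prod_{j}(\interlacingx{i}-\interlacingy{j})\big/\prod_{l\ne i}(\interlacingx{i}-\interlacingx{l})$, and this equals $\transitionprob{\lambda}{\lambda^{(i)}}$ by Kerov's content formula for $\dim\simplerep{\lambda^{(i)}}/\big((|\lambda|+1)\dim\simplerep{\lambda}\big)$ \cite{Ker00}. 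For the second identity I would perform the polynomial division $P/Q = z + c + O(z^{-1})$; the constant is $c = -\big(\sum_i\interlacingx{i}-\sum_j\interlacingy{j}\big)$, which vanishes precisely because the center of the interlacing sequences is $0$ (the Proposition of \cite{Ker00} stated above). The remaining poles lie at the $\interlacingy{i}$, and matching $\operatorname{Res}_{z=\interlacingy{i}}P/Q = P(\interlacingy{i})/Q'(\interlacingy{i})$ against $-|\lambda|\,\cotransitionprob{\lambda}{\lambda_{(i)}}$ is again a dimension-ratio computation, now for $\dim\simplerep{\lambda_{(i)}}/\dim\simplerep{\lambda}$; the interlacing inequalities make the signs of the two residues agree automatically.

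The routine parts are the geometric-series summation and the partial-fraction/polynomial-division bookkeeping, and the only place the stated center-$0$ proposition enters is in killing the constant term $c$. The main obstacle is the pair of residue-versus-probability identities, i.e.\ showing that the representation-theoretic ratios $\dim\simplerep{\lambda^{(i)}}/\big((|\lambda|+1)\dim\simplerep{\lambda}\big)$ and $\dim\simplerep{\lambda_{(i)}}/\dim\simplerep{\lambda}$ are exactly the products of content differences produced by the residues. These are the substantive inputs from Kerov's theory; I would either cite them directly from \cite{Ker00}, or derive them by telescoping the hook-length formula along the rim of $\lambda$, using that the addable and removable contents $\interlacingx{i},\interlacingy{j}$ record precisely the arm/leg data surviving the cancellation.
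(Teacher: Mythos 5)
Your proposal is correct and follows essentially the same route as the paper: the paper's proof is a one-line citation of equation (2.3) and Lemma 5.1 of Kerov, which are exactly the two rational-function identities $\tranmomentseries{\lambda}=Q(z)/P(z)$ and $\cotranmomentseries{\lambda}=P(z)/Q(z)$ (with $P,Q$ the monic polynomials with roots at the addable and removable contents) that you reduce the lemma to after summing the geometric series. The only difference is that you additionally sketch proofs of those two cited identities via residue matching and the center-zero condition, which is a faithful reconstruction of Kerov's argument rather than a new one.
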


\begin{proof}
This follows directly from equation $(2.3)$ and Lemma 5.1 in \cite{Ker00}.
\end{proof}

\omitt{
\begin{proof}
Begin by observing that 
\begin{equation*}
\sum_{i = 1}^d \frac{\transitionprob{\lambda}{\lambda^{(i)}}}{z - \interlacingx{i}} =  \sum^\infty_{k = 0} \sum_{i = 1}^d \transitionprob{\lambda}{\lambda^{(i)}}\interlacingx{i}^kz^{-k-1} = \sum^\infty_{k = 0} \moment{k}(\lambda)z^{-k-1} = \tranmomentseries{\lambda}.
\end{equation*}
Similarly,
\begin{equation*}
z - |\lambda|\sum_{i =1}^{d-1} \frac{\cotransitionprob{\lambda}{\lambda_{(i)}}}{z - \interlacingy{i}} = z - |\lambda|\sum^\infty_{k = 0} \sum_{i = 1}^{d-1} \cotransitionprob{\lambda}{\lambda_{(i)}}\interlacingy{i}^kz^{-k-1} = z - \sum^\infty_{k = 0} |\lambda| \comoment{k}z^{-k-1} =  \cotranmomentseries{\lambda}.
\end{equation*}
The result then follows from \eqref{eqn-interlacing-and-tran-prob} and \eqref{eqn-interlacing-and-cotran-prob}.
\end{proof}
}

The {\emph{boolean cumulants}} $\{\Boolean{k}(\lambda)\}_{k \geq 1}$ associated to $\transition{\lambda}$ can be defined as the coefficients on the multiplicative inverse of $\tranmomentseries{\lambda}$,
\begin{equation} \label{eqn-boolean-definition}
\booleanseries{\lambda} = z - \sum_{k = -1}^\infty \Boolean{k+2}(\lambda)z^{-k-1} = (\tranmomentseries{\lambda})^{-1}.
\end{equation}
With Lemma \ref{lemma-momentseries-interlacing} this definition immediately gives us the following fact.

\begin{proposition} \label{prop-boolean-comoment}
Let $\lambda \in \partitions$ and $k \geq 0$, then $\Boolean{1}(\lambda) = 0$ and
\begin{equation}
\Boolean{k+2}(\lambda) = |\lambda|\comoment{k}(\lambda).
\end{equation}
\end{proposition}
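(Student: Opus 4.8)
The plan is to chain together the defining equation for the Boolean cumulant series with the identity of Lemma \ref{lemma-momentseries-interlacing}, and then read off the result by comparing coefficients of two power series in $z^{-1}$. Since the statement itself is flagged as an immediate consequence of that lemma, the whole argument is formal: there is essentially no computation, only careful bookkeeping of the shifted indices.

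First I would invoke the definition \eqref{eqn-boolean-definition}, which gives $\booleanseries{\lambda} = (\tranmomentseries{\lambda})^{-1}$. Lemma \ref{lemma-momentseries-interlacing} asserts $\tranmomentseries{\lambda} = (\cotranmomentseries{\lambda})^{-1}$, so taking reciprocals yields $(\tranmomentseries{\lambda})^{-1} = \cotranmomentseries{\lambda}$. Combining these two identities collapses everything into the single equality
$$
z - \sum_{k=-1}^\infty \Boolean{k+2}(\lambda)\, z^{-k-1} \;=\; \booleanseries{\lambda} \;=\; \cotranmomentseries{\lambda} \;=\; z - \sum_{k=0}^\infty |\lambda|\,\comoment{k}(\lambda)\, z^{-k-1}.
$$

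The final step is to equate coefficients of like powers of $z$ on the two sides. The one point that deserves attention is the indexing at the bottom of the Boolean sum: the $k=-1$ term on the left contributes $\Boolean{1}(\lambda)\,z^{0}$, whereas the right-hand series has no constant term (it begins at $z^{-1}$). Matching the $z^{0}$ coefficient therefore forces $\Boolean{1}(\lambda) = 0$, and for each $k \geq 0$ matching the coefficient of $z^{-k-1}$ gives $\Boolean{k+2}(\lambda) = |\lambda|\,\comoment{k}(\lambda)$, which is exactly the claim. I do not expect any genuine obstacle here; the entire content of the proposition is supplied by the series identity of Lemma \ref{lemma-momentseries-interlacing}, and the proof is the observation that the Boolean cumulant series and the (rescaled) co-transition moment series are literally the same power series, term by term.
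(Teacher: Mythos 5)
Your proof is correct and is exactly the argument the paper intends: the paper derives the proposition "immediately" from the definition \eqref{eqn-boolean-definition} together with Lemma \ref{lemma-momentseries-interlacing}, which is precisely your chain $\booleanseries{\lambda} = (\tranmomentseries{\lambda})^{-1} = \cotranmomentseries{\lambda}$ followed by comparing coefficients. Your explicit handling of the $k=-1$ term to extract $\Boolean{1}(\lambda)=0$ is the only bookkeeping the paper leaves implicit.
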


\begin{remark} \label{rmk-recursive-for-boolean-moments}
The equality \eqref{eqn-boolean-definition} can be rewritten as 
\begin{equation} \label{eqn-recursive-rln-moment-boolean}
\sum_{i = 1}^k \moment{k-i}(\lambda)\Boolean{i}(\lambda) = \moment{k}(\lambda).
\end{equation}
\end{remark}

\omitt{
while the Boolean cumulants $\{\coboolean{k}(\lambda)\}_{k \geq 1}$ for $\cotransition{\lambda}$ can be defined as the coefficients of
\begin{equation*}
\cobooleanseries{\mu} = \Big(\frac{z}{|\mu|} - \frac{1}{|\mu|}\cotranmomentseries{\mu}\Big)^{-1} = z - \sum_{k = 1}^\infty \coboolean{k}(\mu)z^{1-k}.
\end{equation*}
}

For general information about the relationship between moments, Boolean cumulants, and other families of cumulants see \cite{AHLV15}.

There is a more algebraic approach to the transition measure due to Biane \cite{B98}. Let 
\begin{equation*}
\pr{n-1}: \MB{C}[\Sy{n}] \rightarrow \MB{C}[\Sy{n-1}] \subset \MB{C}[\Sy{n}]
\end{equation*}
be the projection map defined on $\Sy{n}$ by
\begin{equation*}
\pr{n-1}(g) = \begin{cases}
g & \text{if $g(n) = n$} \\
0 & \text{otherwise.}
\end{cases}
\end{equation*}
In the context of probability theory, $\pr{n-1}$ is sometimes known as the {\emph{conditional expectation}}.

\begin{proposition} \label{prop-alt-definition-moments}
For $\lambda \vdash n$,
\begin{equation} \label{eqn-moment}
\moment{k}(\lambda) = \norcharrep{\lambda}[\pr{n}(\JM{n+1}^k)]
\end{equation}
and
\begin{equation} \label{eqn-comoments-algebraic}
\Boolean{k+2}(\lambda) = |\lambda|\comoment{k}(\lambda) = \norcharrep{\lambda}\Big(\sum_{i=1}^{n} s_i \dots s_{n-1}\JM{n}^ks_{n-1} \dots s_i \Big).
\end{equation}
\end{proposition}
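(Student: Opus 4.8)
The plan is to treat both displayed formulas in the same spirit: each asserts that $\norcharrep{\lambda}$, applied to a concrete element of $\MB{C}[\Sy{n}]$, returns a specified moment. Since $\lambda\vdash n$, the functional $\norcharrep{\lambda}$ is the normalized character of $\simplerep{\lambda}$ itself, and by Proposition \ref{prop-normchar-is-homomorphism} it returns the scalar by which any \emph{central} element acts on $\simplerep{\lambda}$. So the first move in both cases is to check that the relevant element is central, and then to identify its eigenvalue on $\simplerep{\lambda}$ using two standard facts: the branching rule $\mathrm{Res}^{\Sy{n}}_{\Sy{n-1}}\simplerep{\lambda}=\bigoplus_i\simplerep{\lambda_{(i)}}$ (resp. its induction/addable-box version), and the fact that on the Gelfand--Tsetlin basis $\JM{m}$ acts on a tableau vector by the content of the box containing $m$. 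Note also that the first equality in \eqref{eqn-comoments-algebraic}, namely $\Boolean{k+2}(\lambda)=|\lambda|\comoment{k}(\lambda)$, is already Proposition \ref{prop-boolean-comoment}, so only the outer equality needs proof.

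For \eqref{eqn-moment} I would follow Biane \cite{B98}. The element $\pr{n}(\JM{n+1}^k)$ is central in $\MB{C}[\Sy{n}]$: indeed $\JM{n+1}=\sum_{a\le n}(a,n+1)$ is invariant under conjugation by $\Sy{n}$, hence so is $\JM{n+1}^k$, and conjugation by $\Sy{n}$ preserves the set $\{g:g(n+1)=n+1\}$ on which $\pr{n}$ is supported. Its eigenvalue on $\simplerep{\lambda}$ may be read as the $k$th moment of $\JM{n+1}$ in the state on $V:=\mathrm{Ind}^{\Sy{n+1}}_{\Sy{n}}\simplerep{\lambda}$ determined by the canonical copy $e\otimes\simplerep{\lambda}$: writing $\iota\colon\simplerep{\lambda}\to V$ for $w\mapsto e\otimes w$ and $P$ for the orthogonal projection onto its image, one checks $P\,\JM{n+1}^k\,\iota=\iota\circ\pr{n}(\JM{n+1}^k)$, so that the desired eigenvalue equals $\langle\Omega,\JM{n+1}^k\Omega\rangle$ for a unit vector $\Omega=e\otimes w$. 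Decomposing $V=\bigoplus_i\simplerep{\lambda^{(i)}}$, the vector $\Omega$ lies in the $\simplerep{\lambda}$-isotypic part of $\mathrm{Res}_{\Sy{n}}V$, on which $\JM{n+1}$ has eigenvalues exactly $\interlacingx{1},\dots,\interlacingx{d}$; the weight it places on $\interlacingx{i}$ is $\|\pi_i(e\otimes w)\|^2$, where $\pi_i$ projects $V$ onto $\simplerep{\lambda^{(i)}}$. A transitivity argument over the $(n+1)$ cosets of $\Sy{n}$ in $\Sy{n+1}$, together with the dimension count $\sum_i\dim\simplerep{\lambda^{(i)}}=(n+1)\dim\simplerep{\lambda}$, forces this weight to equal $\transitionprob{\lambda}{\lambda^{(i)}}=\dim\simplerep{\lambda^{(i)}}/((n+1)\dim\simplerep{\lambda})$, so the spectral measure is Kerov's transition measure and $\langle\Omega,\JM{n+1}^k\Omega\rangle=\moment{k}(\lambda)$.

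The second formula has a cleaner, self-contained proof on the restriction side. First I would observe that $s_i\cdots s_{n-1}$ (call it $c_i$, with $c_n=e$) runs exactly over the minimal-length left coset representatives of $\Sy{n-1}$ in $\Sy{n}$, and that $s_{n-1}\cdots s_i=c_i^{-1}$, so the operator in \eqref{eqn-comoments-algebraic} is $X_k:=\sum_{c}c\,\JM{n}^k\,c^{-1}$, summed over those $n$ representatives. Because $\JM{n}=\sum_{a\le n-1}(a,n)$ commutes with $\Sy{n-1}$, the conjugate $c\,\JM{n}^k\,c^{-1}$ depends only on the coset $c\Sy{n-1}$, and it follows at once that $X_k$ is invariant under $\Sy{n}$-conjugation, i.e.\ $X_k\in Z(\MB{C}[\Sy{n}])$. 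Then $\charrep{\lambda}(X_k)=\sum_c\charrep{\lambda}(c\,\JM{n}^k\,c^{-1})=n\,\mathrm{tr}_{\simplerep{\lambda}}(\JM{n}^k)$ by cyclicity of the trace. Finally, under $\mathrm{Res}_{\Sy{n-1}}\simplerep{\lambda}=\bigoplus_i\simplerep{\lambda_{(i)}}$ the element $\JM{n}$, being an $\Sy{n-1}$-endomorphism, acts on the isotypic block $\simplerep{\lambda_{(i)}}$ by the scalar $\interlacingy{i}$ (the content of the box $\lambda/\lambda_{(i)}$, by the Gelfand--Tsetlin property), so $\mathrm{tr}_{\simplerep{\lambda}}(\JM{n}^k)=\sum_i\interlacingy{i}^k\dim\simplerep{\lambda_{(i)}}$. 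Dividing by $\dim\simplerep{\lambda}$ gives $\norcharrep{\lambda}(X_k)=n\sum_i\interlacingy{i}^k\cotransitionprob{\lambda}{\lambda_{(i)}}=|\lambda|\comoment{k}(\lambda)$, which is $\Boolean{k+2}(\lambda)$ by Proposition \ref{prop-boolean-comoment}.

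The main obstacle lives entirely in \eqref{eqn-moment}: pinning down the spectral weights as the transition probabilities. The asymmetry is structural. On the \emph{restriction} side $\JM{n}$ is genuinely scalar on each isotypic summand of $\simplerep{\lambda}$, which is exactly what makes the co-transition computation a one-line trace identity; on the \emph{induction} side $\JM{n+1}$ is not scalar on the summands $\simplerep{\lambda^{(i)}}$ of $V$, so one must instead track how the canonical vector $e\otimes w$ distributes across them. This is precisely Biane's theorem, and I would either cite \cite{B98} or supply the transitivity-plus-dimension argument sketched above; given it, the co-transition identity \eqref{eqn-comoments-algebraic} needs nothing beyond branching and the content property.
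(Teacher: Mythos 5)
Your proposal is correct and follows essentially the same route as the paper: for \eqref{eqn-comoments-algebraic} the paper likewise uses the class-function property to reduce to $|\lambda|\norcharrep{\lambda}(\JM{n}^k)$ and then the branching-rule eigenspace decomposition of $\simplerep{\lambda}$ under $\JM{n}$ with eigenvalues the contents of removable boxes. For \eqref{eqn-moment} the paper simply cites Biane \cite{B98}/\cite{Bia03} and Theorem 9.23 of \cite{HO07}, whereas you reconstruct that cited argument (correctly); no substantive difference in method.
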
 

\begin{proof}
The statement of \eqref{eqn-moment} appears in \cite{Bia03} Section 4. A detailed proof is given in Theorem 9.23 of \cite{HO07}. To get \eqref{eqn-comoments-algebraic} note that since characters are class functions,
\begin{equation*}
\norcharrep{\lambda}\Big(\sum_{i}^{n} s_i \dots s_{n-1}\JM{n}^ks_{n-1} \dots s_i \Big) = |\lambda|\norcharrep{\lambda}(\JM{n}^k).
\end{equation*}
As $\JM{n}$ eigenspaces, $\simplerep{\lambda}$ decomposes as
\begin{equation*}
\simplerep{\lambda} \cong \bigoplus_{i = 1}^{d-1} \simplerep{\lambda_{(i)}}
\end{equation*}
with $\simplerep{\lambda_{(i)}}$ corresponding to eigenvalue $b_i$ \cite{OkV04}. Hence,
\begin{equation*}
|\lambda|\norcharrep{\lambda}(\JM{n}^k) = |\lambda|\sum^{d-1}_{i=1}  \frac{\dim(\lambda_{(i)}) b_i^k}{\dim(\lambda)} = |\lambda|\comoment{k}(\lambda) = \Boolean{k+2}(\lambda).
\end{equation*}

\end{proof}

Proposition \ref{prop-alt-definition-moments} is related to the fact that we are working in a noncommutative probability space (that is, a von Neumann algebra equipped with a normal faithful trace). In our case the algebra is $\End(\simplerep{\lambda})\otimes M_{n+1}(\MB{C})$ and $\transition{\lambda}$ then arises from the distribution of a self-adjoint element in this algebra (see Proposition 3.3 in \cite{B98}).

\omitt{
\tonytodo{Maybe we should add a comment here mentioning the fact that the above proposition is related to the fact that we are working in a noncommutative probability space (that is, a noncommutative von Neumann algebra equipped with a normal faithful trace, or something like that)}}

\section{Symmetric functions and shifted symmetric functions}

In order to define the algebra of shifted symmetric functions, we first recall the classical symmetric functions. Let $\Lambda_n$ be the algebra of symmetric polynomials over $\MB{C}$ in $x_1, \dots, x_n$. This algebra is graded by polynomial degree. Recall that for $n \geq 0$ there is a homomorphism
\begin{equation} \label{eqn-poly-surj}
\Lambda_{n+1} \rightarrow \Lambda_{n}
\end{equation}
given by setting $x_{n+1} = 0$ in $\Lambda_{n+1}$. One can define the algebra of symmetric functions as the projective limit $\Lambda =  \varprojlim \Lambda_n$ taken in the category of graded algebras. We recall three collections of algebraically independent generators of $\Lambda$:
\begin{itemize}
\item elementary symmetric functions $e_1, e_2, e_3, \dots$,
\item complete homogeneous symmetric functions $h_1, h_2, h_3, \dots$,
\item power sum symmetric functions $p_1, p_2, p_3, \dots$
 \end{itemize}
For $\{f_k\}_{k \geq 1}$ equal to any of these three sets of generators and $\lambda = (\lambda_1,\dots,\lambda_r)$ we write $f_\lambda := f_{\lambda_1} \dots f_{\lambda_r}$. We denote the basis of Schur functions by $\{s_\lambda\}_{\lambda \in \partitions}$. We refer the reader to \cite{Mac15} and \cite{RS99} for background on $\Lambda$.

Let $\ShiftSym{n}$ be the algebra of polynomials over $\MB{C}$ in $x_1, \dots, x_n$, which become symmetric in the new variables $x_i' = x_i - i$. This algebra is filtered by polynomial degree. In analogy to $\Lambda_{n+1}$, setting $x_{n+1} = 0$ in $\ShiftSym{n+1}$ gives a homomorphism 
\begin{equation} \label{eqn-shift-poly-surj}
\ShiftSym{n+1} \rightarrow \ShiftSym{n}
\end{equation}
which respects the filtration. Using \eqref{eqn-shift-poly-surj}, set 
\begin{equation*}
\ShiftSym{} := \varprojlim \ShiftSym{n}, 
\end{equation*}
where this limit is taken in the category of filtered algebras. $\ShiftSym{}$ is called the {\emph{algebra of shifted symmetric functions}}. 

Because $\ShiftSym{}$ is filtered, we can consider the associated graded algebra $\assocgraded(\ShiftSym{})$. 

\begin{proposition} \cite[Prop. 1.5]{OO97} \label{prop-grshift}
$\assocgraded(\ShiftSym{})$ is canonically isomorphic to $\Lambda$. 
\end{proposition}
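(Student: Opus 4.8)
The plan is to prove the isomorphism via the canonical \emph{leading-term} (symbol) map. Write $\sigma_n = (1,2,\dots,n)$, so that an element of $\ShiftSym{n}$ is precisely a polynomial of the form $q(x_1 - 1, \dots, x_n - n)$ with $q \in \Lambda_n$ symmetric. The key first observation is that translating the variables does not affect top-degree homogeneous parts: if $q = q_d + (\text{lower degree})$ with $q_d$ homogeneous symmetric of degree $d$, then $q(x_1 - 1, \dots, x_n - n) = q_d(x_1, \dots, x_n) + (\text{lower degree in } x)$, since $q_d$ is homogeneous. Hence the top homogeneous component of any $f \in \ShiftSym{n}$ of degree $d$ is an honest symmetric polynomial in $x_1, \dots, x_n$ of degree $d$.

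Next I would promote this to the inverse limits. Given $f = (f_n) \in \ShiftSym{}$ of filtration degree $d$, the degrees $\deg f_n$ are nondecreasing in $n$ (setting $x_{n+1}=0$ cannot raise the degree) and bounded, hence stabilize at $d$; so for $n \geq d$ each $f_n$ has degree exactly $d$ and a well-defined top component $\ell_n(f) \in \Lambda_n$. Because the structure maps $\ShiftSym{n+1} \to \ShiftSym{n}$ and $\Lambda_{n+1} \to \Lambda_n$ are both given by setting $x_{n+1} = 0$, and because setting a variable to zero sends homogeneous degree-$d$ monomials to homogeneous degree-$d$ monomials (or to zero) and thus commutes with the extraction of homogeneous components, the family $(\ell_n(f))_{n \geq d}$ is compatible under $\Lambda_{n+1} \to \Lambda_n$ and defines a homogeneous element $\psi(f) \in \Lambda$ of degree $d$. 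By construction $\psi(f) = 0$ exactly when $f$ has filtration degree strictly less than $d$, so $\psi$ descends to a graded linear map $\overline{\psi} : \assocgraded(\ShiftSym{}) \to \Lambda$.

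It then remains to verify that $\overline{\psi}$ is an algebra isomorphism. Multiplicativity is the assertion that the leading term of a product is the product of the leading terms; this holds because each $\Lambda_n$ is an integral domain, so the product of two nonzero top-degree parts is nonzero and the degrees add. For injectivity, a nonzero class in $\assocgraded_d(\ShiftSym{})$ is represented by some $f$ of filtration degree exactly $d$, whose top component is nonzero, so $\overline{\psi}$ is injective in each degree. For surjectivity it suffices, since $\overline{\psi}$ is an algebra map, to realize a generating set of $\Lambda$ in the image: each power sum $p_k$ arises as the leading term of a shifted power sum in $\ShiftSym{}$. (Alternatively, the substitution $x_i \mapsto x_i - i$ gives for each fixed $n$ a filtration-preserving linear bijection $\Lambda_n \xrightarrow{\sim} \ShiftSym{n}$ inducing the identity on associated gradeds, whence $\dim \assocgraded_d(\ShiftSym{})$ equals the number of partitions of $d$, which is $\dim \Lambda_d$; an injective map between finite-dimensional spaces of equal dimension is onto.)

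The main obstacle is the bookkeeping in the second paragraph: one must check that the filtration degrees stabilize and that the leading-term map is genuinely compatible with the structure maps of \emph{both} inverse systems, so that $\psi$ lands in $\Lambda = \varprojlim \Lambda_n$ rather than merely in each $\Lambda_n$ separately. Once this compatibility is established, the multiplicativity, the degreewise injectivity, and the surjectivity argument are all routine. I would also emphasize that the construction involves no choice of basis, which is what makes the resulting isomorphism \emph{canonical}, as claimed.
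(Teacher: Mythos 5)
The paper offers no proof of this proposition at all --- it is quoted verbatim from \cite[Prop.~1.5]{OO97} --- so there is nothing in-paper to compare against; your leading-term (symbol) construction is in fact the standard argument and is essentially the one given in Okounkov--Olshanski. Your first two paragraphs and the multiplicativity/injectivity claims are correct: translation of variables preserves top homogeneous components, extraction of the degree-$d$ part commutes with setting $x_{n+1}=0$, and products of nonzero leading terms are nonzero because each $\Lambda_n$ sits inside a polynomial ring, which is a domain. (One small implicit step worth making explicit: to know that $\deg f_n = d$ already for all $n \geq d$, rather than only for $n$ large, you use that a nonzero homogeneous symmetric polynomial of degree $d$ in $m$ variables stays nonzero after restriction to $n \geq d$ variables.)

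The only soft spot is surjectivity, and there your two suggested routes are not equally safe. The main route is fine once you actually exhibit a stable family with leading term $p_k$; the cleanest witness is
\begin{equation*}
\shiftpwr{}\!\!\text{-type element:}\qquad \sum_{i=1}^{n}\bigl((x_i-i)^k-(-i)^k\bigr)\in\ShiftSym{n},
\end{equation*}
which is visibly symmetric in the $x_i'=x_i-i$ up to an additive constant, is killed term-by-term under $x_{n+1}\mapsto 0$ (the $(n+1)$-st summand vanishes identically), and has top component $p_k$. Without some such explicit element, ``each $p_k$ arises as the leading term of a shifted power sum in $\ShiftSym{}$'' is precisely the content of surjectivity on generators, not a proof of it; note also that the paper's own $\shiftpwr{k}$ is defined via shifted Schur functions, which are introduced only after this proposition. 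The parenthetical dimension-count alternative is weaker than you suggest: the substitution bijections $\Lambda_n\to\ShiftSym{n}$ do \emph{not} commute with the structure maps of the two inverse systems (setting $x_{n+1}=0$ on the shifted side corresponds to evaluating the symmetric polynomial at $-(n+1)$ in the last variable, not at $0$), and $\assocgraded$ does not automatically commute with $\varprojlim$, so the asserted equality $\dim\assocgraded_d(\ShiftSym{})=p(d)$ does not follow from the fixed-$n$ statement alone. Stick with the explicit-witness route.
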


\begin{remark}
It is noted in Remark 1.7 of \cite{OO97} that we may also view $\ShiftSym{}$ as a deformation of $\Lambda$. Let $\ShiftSym{n}(\theta)$ be the algebra of polynomials in $x_1, \dots, x_n$ which are symmetric in the new variables $x'_i = x_i + c - i\theta$ for $1 \leq i \leq n$ and where $c \in \MB{C}$. Define $\ShiftSym{}(\theta) = \varprojlim \ShiftSym{n}(\theta)$. Then $\ShiftSym{}(0) = \Lambda$ and $\ShiftSym{}(1) = \ShiftSym{}$. In fact for all $\theta \neq 0$, $\ShiftSym{}(\theta) \cong \ShiftSym{}$.
\end{remark}

\subsection{Bases of $\ShiftSym{}$}

In \cite{OO97} Okounkov and Olshanski introduced a remarkable basis for $\ShiftSym{}$ called the shifted Schur functions. Let $\lambda = (\lambda_1, \dots, \lambda_n)$ be a partition with $\lambda_1 \geq \dots \geq \lambda_n \geq 0$ (note that here we allow components of a partition to be zero). The \emph{shifted Schur polynomial in $n$ variables, indexed by $\lambda$} is the ratio of two $n \times n$ determinants,
\begin{equation}
s^*_\lambda(x_1,\dots,x_n) = \frac{\det[(x_i + n - i \downharpoonright \lambda_j + n - j)]}{\det[(x_i + n - i \downharpoonright n - j)]},
\end{equation}
where $1 \leq i, j \leq n$. This polynomial belongs to $\ShiftSym{n}$. It is shown in \cite{OO97} that
\begin{equation}
s^*_\lambda(x_1,\dots,x_n,0) = s^*_{\lambda}(x_1,\dots,x_n).
\end{equation}
This implies that for fixed $\lambda$, letting $n \rightarrow \infty$ gives a well-defined element $\shiftschur{\lambda}$ of $\ShiftSym{}$. The elements $\{\shiftschur{\lambda}\}_{\lambda \in \MC{P}} \in \ShiftSym{}$ are called the \emph{shifted Schur functions} and form a basis for $\ShiftSym{}$. There is a linear map $\ShiftSym{} \rightarrow \assocgraded(\ShiftSym{}) \cong \Lambda$ which sends $f \in \ShiftSym{}$ to its top homogeneous component which is an element of $\Lambda$. Under this map
\begin{equation*}
\shiftschur{\lambda} \mapsto s_\lambda
\end{equation*}
or alternatively,
\begin{equation} \label{eqn-schur-is-top-degree}
\shiftschur{\lambda} = s_\lambda + \loworderterms
\end{equation}
where $\loworderterms$ means lower order terms in polynomial degree.

In analogy to the classical case, the {\emph{elementary shifted functions}} can be defined as $\elementaryshift{k} := \shiftschur{(1^k)}$, while the {\emph{complete shifted functions}} can be defined as $\homogenshift{k} := \shiftschur{(k)}$. More explicitly:
\begin{equation*}
\elementaryshift{k}(x_1,x_2,\dots) = \sum_{1 \leq i_1 < \dots < i_k < \infty} (x_{i_1} + k-1)(x_{i_2} + k -2) \dots x_{i_k}
\end{equation*}
and
\begin{equation*}
\homogenshift{k}(x_1,x_2,\dots) = \sum_{1 \leq i_1 \leq \dots \leq i_k < \infty} (x_{i_1} - k+1)(x_{i_2} - k +2) \dots x_{i_k}.
\end{equation*}

Let $\Symtoshift$ be the linear isomorphism $\Symtoshift: \Lambda \rightarrow \ShiftSym{}$ which sends $s_\lambda \mapsto s_\lambda^*$. Define the element $\shiftpwr{\lambda} \in \ShiftSym{}$ to then be
\begin{equation} \label{eqn-shiftpwr}
\shiftpwr{\lambda} := F(p_\lambda),
\end{equation}
where $p_\lambda$ is the power sum symmetric function.  The elements $\shiftpwr{\lambda}$ are one of several shifted analogues of the power sums. For $\lambda \vdash n$, the transition coefficients between the power-sum and Schur bases are given by the character tables of the symmetric group (see \cite{RS99}):
\begin{equation*}
p_\lambda = \sum_{\mu \vdash n} \charrep{\mu}(\lambda) s_\mu.
\end{equation*}
It follows directly from definition \eqref{eqn-shiftpwr} that
\begin{equation} \label{eqn-shiftpower-in-shiftschur}
\shiftpwr{\lambda} = \sum_{\mu \vdash n} \charrep{\mu}(\lambda) \shiftschur{\mu}.
\end{equation}
Note also that by \eqref{eqn-schur-is-top-degree} and \eqref{eqn-shiftpower-in-shiftschur},
\begin{equation} \label{eqn-power-is-top-degree} 
\shiftpwr{\lambda} = p_\lambda + \loworderterms
\end{equation}
Since the power symmetric functions $p_1, p_2, \dots$ are algebraically independent and generate $\Lambda$, it follows from \eqref{eqn-power-is-top-degree} that $\shiftpwr{1}, \shiftpwr{2}, \dots$ are algebraically independent and generate $\ShiftSym{}$. Similarly, since $\{p_\lambda\}_{\lambda \in \partitions}$ is a basis for $\Lambda$, $\{\shiftpwr{\lambda}\}_{\lambda \in \partitions}$ is a basis for $\ShiftSym{}$. For more properties of the basis $\{\shiftpwr{\lambda}\}$ see \cite{IO02}.

\begin{remark}
Let $\lambda = (\lambda_1, \dots, \lambda_r) \vdash n$. While it is true that in $\Lambda$, $p_{\lambda_1} \dots p_{\lambda_r} = p_\lambda$, in general
\begin{equation*}
\shiftpwr{\lambda_1} \dots \shiftpwr{\lambda_r} \neq \shiftpwr{\lambda}.
\end{equation*}
However, by \eqref{eqn-power-is-top-degree} 
\begin{equation*}
\shiftpwr{\lambda_1} \dots \shiftpwr{\lambda_r} = \shiftpwr{\lambda} + \loworderterms
\end{equation*}
\end{remark}


\subsection{$\ShiftSym{}$ as functions on $\partitionsn{}$} \label{sect-shiftsym-as-partition-functions}

Let $\funonyd$ be the algebra of functions from $\partitionsn{}$ to $\MB{C}$ with pointwise multiplication. Viewing $\mu = (\mu_1, \dots, \mu_t) \vdash k$ as the sequence $(\mu_1, \dots, \mu_t, 0, 0, \dots)$, we can evaluate $f \in \ShiftSym{}$ on $\mu$ by setting
\begin{equation} \label{eqn-how-to-eval-part-on-shift}
f(\mu) = f(\mu_1, \dots, \mu_t, 0, 0, \dots).
\end{equation}
Since $(\mu_1, \dots, \mu_t, 0, 0, \dots)$ has only a finite number of nonzero values, it is clear that \eqref{eqn-how-to-eval-part-on-shift} is well-defined. In fact $f$ is uniquely defined by its values on $\partitionsn{}$. Thus $\ShiftSym{}$ may be realized as a subalgebra of $\funonyd$. This fact is used repeatedly en route to establishing many of the fundamental results about shifted symmetric functions in \cite{KO94} and \cite{OO97}.

For $\lambda \vdash n$ and $\alpha$ a cell in the Young diagram corresponding to $\lambda$ let $h(\alpha)$ be the hook length of $\alpha$. Then set $H(\lambda)$ as the product of all hooklengths in $\lambda$,
\begin{equation*}
H(\lambda) := \prod_{\alpha \in \lambda} h(\alpha).
\end{equation*}
The following is known as the ``Characterization Theorem'' of \cite{O96}.
\begin{theorem} 
For $\mu \vdash k$, $\shiftschur{\mu}$ is the unique element of $\ShiftSym{}$ such that $\deg(\shiftschur{\mu}) \leq k$ and
\begin{equation*}
\shiftschur{\mu}(\lambda) = \delta_{\mu \lambda}H(\mu)
\end{equation*}
for all $\lambda \in \partitions$ such that $|\lambda| \leq |\mu|$.
\end{theorem}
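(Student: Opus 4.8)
The plan is to separate the statement into existence (the displayed properties hold for $\shiftschur{\mu}$) and uniqueness, with the evaluation identity as the technical heart. The degree bound is immediate from \eqref{eqn-schur-is-top-degree}: since $\shiftschur{\mu} = s_\mu + \loworderterms$ and $s_\mu$ is homogeneous of degree $|\mu| = k$, the filtration degree of $\shiftschur{\mu}$ is exactly $k$. It therefore remains to prove the \emph{vanishing theorem} $\shiftschur{\mu}(\lambda) = 0$ whenever $\mu \not\subseteq \lambda$, together with the \emph{normalization} $\shiftschur{\mu}(\mu) = H(\mu)$; once these are in hand, the hypothesis $|\lambda| \le |\mu|$ forces any $\lambda$ with $\mu \subseteq \lambda$ to equal $\mu$, so the two facts combine to give $\shiftschur{\mu}(\lambda) = \delta_{\mu\lambda}H(\mu)$.

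I would prove both facts directly from the ratio-of-determinants formula for $s^*_\mu$. Given $\lambda$, choose $n$ at least the number of parts of $\mu$ and of $\lambda$; by the stability $s^*_\mu(x_1,\dots,x_n,0) = s^*_\mu(x_1,\dots,x_n)$ recorded above, $\shiftschur{\mu}(\lambda) = s^*_\mu(\lambda_1,\dots,\lambda_n)$. Write $a_i = \lambda_i + n - i$ and $b_j = \mu_j + n - j$; both are strictly decreasing nonnegative integer sequences, the numerator matrix has $(i,j)$ entry $(a_i \downharpoonright b_j)$, which vanishes exactly when $a_i < b_j$, and the denominator evaluates to the nonzero Vandermonde determinant $\prod_{i<j}(a_i - a_j)$. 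If $\mu \not\subseteq \lambda$ there is an index $p$ with $\mu_p > \lambda_p$, so $b_p > a_p$; monotonicity then gives $a_i \le a_p < b_p \le b_j$ for all $i \ge p$ and $j \le p$, making the $(n-p+1)\times p$ lower-left block identically zero. Since $(n-p+1) + p > n$, the numerator determinant vanishes, and hence $\shiftschur{\mu}(\lambda) = 0$. Setting $\lambda = \mu$ makes $a_i = b_i$, so the numerator matrix $[(b_i \downharpoonright b_j)]$ is upper triangular with diagonal $\prod_i b_i!$; dividing by the Vandermonde determinant gives $\prod_i (\mu_i + n - i)! / \prod_{i<j}(\mu_i - \mu_j + j - i)$, which the hook-length formula identifies with $H(\mu)$.

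For uniqueness, suppose $f \in \ShiftSym{}$ satisfies $\deg f \le k$ and $f(\lambda) = \delta_{\mu\lambda}H(\mu)$ for all $|\lambda| \le k$. Expanding $f$ in the basis $\{\shiftschur{\nu}\}$ and using $\deg \shiftschur{\nu} = |\nu|$, the degree bound forces $f = \sum_{|\nu|\le k} c_\nu \shiftschur{\nu}$. The partitions $\nu$ with $|\nu|\le k$ and the evaluation points $\lambda$ with $|\lambda|\le k$ range over the same finite set, so it suffices to show the square matrix $M = [\shiftschur{\nu}(\lambda)]_{|\nu|,|\lambda|\le k}$ is invertible. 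Ordering partitions by increasing size, the vanishing theorem makes $M$ block lower-triangular---an entry with $|\nu| > |\lambda|$ has $\nu \not\subseteq \lambda$ and so vanishes---while within a size block $\nu \subseteq \lambda$ forces $\nu = \lambda$, so each diagonal block is itself diagonal with nonzero entries $H(\nu)$. Thus $M$ is invertible, the prescribed values determine the $c_\nu$ uniquely, and since $\shiftschur{\mu}$ realizes exactly those values we conclude $f = \shiftschur{\mu}$.

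The main obstacle is the evaluation identity of the second paragraph. The zero-block rank argument for the vanishing theorem is clean, but the normalization requires matching the determinantal ratio $\prod_i (\mu_i + n - i)! / \prod_{i<j}(\mu_i - \mu_j + j - i)$ to the product of hook lengths via the hook-length formula; this is routine but is where the combinatorial content lies. Everything else---the degree bound and the triangularity underlying uniqueness---is then formal.
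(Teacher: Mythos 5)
Your proof is correct, but note that the paper does not actually prove this statement: it imports it as the ``Characterization Theorem'' of Okounkov \cite{O96}, so there is no internal argument to compare against. What you have written is essentially the standard Okounkov--Olshanski proof, and all the steps check out. The vanishing argument is sound: with $a_i=\lambda_i+n-i$ and $b_j=\mu_j+n-j$ both strictly decreasing and nonnegative, $(a_i\downharpoonright b_j)=0$ exactly when $a_i<b_j$, and an index $p$ with $\mu_p>\lambda_p$ produces a zero block of total dimension $(n-p+1)+p>n$, killing the numerator determinant while the denominator reduces by column operations to the nonzero Vandermonde $\prod_{i<j}(a_i-a_j)$. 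The normalization $s^*_\mu(\mu)=\prod_i(\mu_i+n-i)!\big/\prod_{i<j}(\mu_i-\mu_j+j-i)=H(\mu)$ is the classical hook-length identity. For uniqueness, your claim that $\deg f\le k$ forces the expansion of $f$ to be supported on $|\nu|\le k$ deserves one more sentence than you gave it: a priori a combination involving $|\nu|>k$ could have small degree by cancellation, but this is ruled out because the top-degree components $s_\nu$ (for $|\nu|$ equal to the maximal size occurring) are linearly independent in $\mathrm{gr}(\ShiftSym{})\cong\Lambda$, so the filtration degree of $\sum c_\nu s^*_\nu$ is exactly $\max\{|\nu|:c_\nu\neq 0\}$. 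With that observation the block-triangularity of the evaluation matrix finishes the argument as you describe. This is a complete and self-contained proof of a result the paper only cites.
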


This theorem along with \eqref{eqn-shiftpower-in-shiftschur} then give the following proposition.
\begin{proposition} \label{prop-value-prwshift} \cite{OO97}
For $\mu \vdash k$, $\lambda \vdash n$, 
\begin{equation}
\shiftpwr{\mu}(\lambda) = \begin{cases}
\frac{(n \downharpoonright k)}{\dim \simplerep{\lambda}}\charrep{\lambda}(\mu) & k \leq n\\
0 & \text{otherwise.}\\
\end{cases}
\end{equation}
\end{proposition}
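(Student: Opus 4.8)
The plan is to read off $\shiftpwr{\mu}(\lambda)$ from the Schur expansion \eqref{eqn-shiftpower-in-shiftschur}. Writing the sum over partitions $\nu$ of $k$ (to avoid clashing with the fixed $\mu$), that expansion gives
\[
\shiftpwr{\mu}(\lambda) \;=\; \sum_{\nu \vdash k} \charrep{\nu}(\mu)\,\shiftschur{\nu}(\lambda),
\]
so it suffices to evaluate each shifted Schur function $\shiftschur{\nu}(\lambda)$ and then collapse the resulting character sum. I would split into the cases $k>n$ and $k\le n$.

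When $k>n$ every $\nu\vdash k$ has $|\lambda|=n<k=|\nu|$, so the Characterization Theorem applies and yields $\shiftschur{\nu}(\lambda)=\delta_{\nu\lambda}H(\nu)$; since $\nu$ and $\lambda$ are partitions of different sizes this is $0$. Hence every summand vanishes and $\shiftpwr{\mu}(\lambda)=0$, which is the second case of the proposition.

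When $k\le n$ the Characterization Theorem no longer pins down $\shiftschur{\nu}(\lambda)$, and here I would invoke the evaluation formula of Okounkov--Olshanski \cite{OO97}: for $\nu\vdash k\le n=|\lambda|$,
\[
\shiftschur{\nu}(\lambda) \;=\; (n \downharpoonright k)\,\frac{\dim(\lambda/\nu)}{\dim\simplerep{\lambda}},
\]
where $\dim(\lambda/\nu)$ denotes the number of standard Young tableaux of the skew shape $\lambda/\nu$. Substituting,
\[
\shiftpwr{\mu}(\lambda) \;=\; \frac{(n \downharpoonright k)}{\dim\simplerep{\lambda}}\sum_{\nu\vdash k}\charrep{\nu}(\mu)\,\dim(\lambda/\nu).
\]
The one genuine computation left is the character identity $\sum_{\nu\vdash k}\charrep{\nu}(\mu)\dim(\lambda/\nu)=\charrep{\lambda}(\mu\cup 1^{n-k})$, which I would prove through the Frobenius characteristic: using $\charrep{\rho}(\sigma)=\langle p_\sigma,s_\rho\rangle$ and $p_{\mu\cup 1^{n-k}}=p_\mu\,p_1^{\,n-k}$, the left side equals $\langle p_\mu\,p_1^{\,n-k},s_\lambda\rangle=\langle p_\mu,(s_1^{\perp})^{n-k}s_\lambda\rangle$; since $s_1^{\perp}$ strips off a single box, iterating gives $(s_1^{\perp})^{n-k}s_\lambda=\sum_{\nu\vdash k}\dim(\lambda/\nu)\,s_\nu$, and the inner product collapses to $\sum_{\nu}\dim(\lambda/\nu)\charrep{\nu}(\mu)$. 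Feeding this back produces $\shiftpwr{\mu}(\lambda)=(n \downharpoonright k)\charrep{\lambda}(\mu)/\dim\simplerep{\lambda}$, under the standing convention $\charrep{\lambda}(\mu):=\charrep{\lambda}(\mu\cup 1^{n-k})$; this is exactly $\norcharrep{\lambda}(\classsum{\mu}{n})$ by \eqref{prop-normalized-char-class-sum}, giving the first case.

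I expect the real obstacle to be the invocation of the Okounkov--Olshanski evaluation formula for $\shiftschur{\nu}(\lambda)$ in the regime $n>k$: the Characterization Theorem quoted above only determines $\shiftschur{\nu}$ on partitions of size $\le|\nu|$, so the $n>k$ case is not formal and genuinely needs this deeper input (equivalently, the nontrivial fact that $\lambda\mapsto(n \downharpoonright k)\charrep{\lambda}(\mu)/\dim\simplerep{\lambda}$ is a shifted symmetric function of degree $k$). By contrast the vanishing for $k>n$ and the Frobenius/skewing step are routine. A cleaner but less self-contained alternative is simply to note that the proposition is Okounkov--Olshanski's identification of $\shiftpwr{\mu}$ with their character-defined shifted power sum $p_\mu^\#$, which combined with \eqref{eqn-shiftpower-in-shiftschur} is immediate.
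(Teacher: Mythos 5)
Your argument is correct, but it routes around the paper differently at the key step. The paper's justification is the single sentence preceding the proposition: starting from \eqref{eqn-shiftpower-in-shiftschur}, it appeals to the Characterization Theorem, which (implicitly, via the uniqueness clause) reduces the identity to a check on partitions $\lambda$ with $|\lambda|\leq k$, where the claimed value collapses to $\delta_{|\lambda|,k}H(\lambda)\charrep{\lambda}(\mu)$ by the hook length formula; the unstated but essential input from \cite{OO97} is that the character-defined function $\lambda \mapsto (n \downharpoonright k)\charrep{\lambda}(\mu)/\dim\simplerep{\lambda}$ is itself shifted symmetric of degree $\leq k$, so that agreement on small partitions forces agreement everywhere. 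You instead evaluate each $\shiftschur{\nu}(\lambda)$ directly for $n>k$ via Okounkov--Olshanski's dimension formula $\shiftschur{\nu}(\lambda)=(n\downharpoonright k)\dim(\lambda/\nu)/\dim\simplerep{\lambda}$ and then collapse the character sum by a Frobenius/skewing computation; your handling of the $k>n$ case and the identity $\sum_{\nu\vdash k}\charrep{\nu}(\mu)\dim(\lambda/\nu)=\charrep{\lambda}(\mu\cup 1^{n-k})$ are both correct, and your convention $\charrep{\lambda}(\mu)=\charrep{\lambda}(\mu\cup 1^{n-k})$ matches the paper's. Your diagnosis of where the real content lies is also accurate: the Characterization Theorem as quoted only controls values on partitions of size at most $k$, so neither route is purely formal --- each needs one nontrivial theorem from \cite{OO97} (shifted symmetry of the normalized character function for the paper's route, the dimension formula for yours). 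The paper's route is shorter once that input is granted; yours makes the combinatorial mechanism (branching and standard tableaux of skew shape) explicit and yields the intermediate evaluation of $\shiftschur{\nu}(\lambda)$ as a byproduct.
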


\begin{remark}
We will later use the fact that $\shiftpwr{1} = x_1 + x_2 + \dots = p_1$, so that $\shiftpwr{1}(\lambda) = |\lambda|$ for all $\lambda \in \partitions$.
\end{remark}

\omitt{
There is a useful alternative construction of $\ShiftSym{}$ as a subalgebra of $\funonyd$.
For $\lambda \in \partitions$ let $\contentnumb{1}{\lambda},$ $\contentnumb{2}{\lambda},$ $\dots,$ $\contentnumb{|\lambda|}{\lambda}$ be the contents of the cells of $\lambda$ written in an arbitrary order. For any symmetric function $f \in \Lambda$ we can then evaluate $\lambda$ by
\begin{equation} \label{eqn-sym-to-func-partition}
f(\lambda) := f(\contentnumb{1}{\lambda}, \contentnumb{1}{\lambda}, \dots, \contentnumb{|\lambda|}{\lambda}, 0, 0, \dots).
\end{equation}
It was known to Kerov and is shown in \cite{O10} that under \eqref{eqn-sym-to-func-partition}, $\Lambda$ coincides with $\ShiftSym{}$ in $\funonyd$. This gives a homomorphism $\contentmap: \Lambda \rightarrow \ShiftSym{}$. }


\omitt{
For $\lambda \in \partitions$ let $\contentsalpha{\lambda} = \{ \content{\ydcell} \; | \; \ydcell \in \lambda\}$ be the alphabet whose elements are the contents of each cell in $\lambda$. For any symmetric function $f \in \Lambda$ we can then evaluate $\lambda$ via
\begin{equation*}
f(\lambda) = f(\contentsalpha{\lambda}).
\end{equation*}
It is shown in \cite{ } that as a function on $\funonyd$, $f$ can then be realized as an element of $\ShiftSym{}$. We denote this homomorphism as $\contentmap: \Lambda \rightarrow \ShiftSym{}$. For more information about this construction, see \cite{O10}.
}

In Section \ref{sect-transition} we introduced the moments $\{\moment{k}(\lambda)\}$ (resp. $\{\comoment{k}(\lambda)\}$) of the transition measure (resp. co-transition measure) associated to a partition $\lambda$ and the corresponding Boolean cumulants $\{\Boolean{k}(\lambda)\}$\omitt{(resp. $\{\coboolean{k}\}$)}. We can interpret all of these as elements of $\funonyd$ via
\begin{equation*}
\lambda \xmapsto{\moment{k}} \moment{k}(\lambda), \quad\quad \lambda \xmapsto{\comoment{k}} \comoment{k}(\lambda), \quad \text{and} \quad \lambda \xmapsto{ \Boolean{k} } \Boolean{k}(\lambda).
\end{equation*}
We omit the partition argument from $\moment{k}$, $\comoment{k}$, and $\Boolean{k}$ in this context to emphasize that we are considering them as elements of $\funonyd$. 

\begin{proposition} \cite[Theorem 6.4]{L09} \label{prop-moments-as-sym}
As elements of $\funonyd$, $\moment{k}$ and $\Boolean{k}$ belong to $\ShiftSym{}$.
\end{proposition}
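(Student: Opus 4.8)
The plan is to prove the statement first for the moments $\moment{k}$ and then to deduce it for the Boolean cumulants $\Boolean{k}$ from the moment--cumulant recursion. The engine for the moments is Proposition \ref{prop-alt-definition-moments}, which realizes $\moment{k}(\lambda)$, for $\lambda \vdash n$, as the normalized character $\norcharrep{\lambda}[\pr{n}(\JM{n+1}^k)]$ of a single element of $\MB{C}[\Sy{n}]$. Combining \eqref{prop-normalized-char-class-sum} with Proposition \ref{prop-value-prwshift} gives $\norcharrep{\lambda}(\classsum{\mu}{n}) = \shiftpwr{\mu}(\lambda)$, so it suffices to expand $a_n := \pr{n}(\JM{n+1}^k)$ in the basis of normalized conjugacy class sums $\classsum{\mu}{n}$ and to check that the coefficients are independent of $n$.

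First I would verify that $a_n$ lies in the center $Z(\MB{C}[\Sy{n}])$. The element $\JM{n+1} = \sum_{j=1}^{n} (j,n+1)$ is fixed under conjugation by any $w \in \Sy{n}$ (such $w$ merely permutes the transpositions $(j,n+1)$), so $\JM{n+1}^k$ commutes with $\MB{C}[\Sy{n}]$; moreover $\pr{n}$ is equivariant for conjugation by $\Sy{n}$, since the defining condition ``$g(n+1)=n+1$'' is preserved by such conjugation. Hence $w a_n w^{-1} = \pr{n}(w \JM{n+1}^k w^{-1}) = a_n$ for all $w \in \Sy{n}$, so $a_n$ is central and thus a linear combination of the $\classsum{\mu}{n}$. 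Expanding $\JM{n+1}^k$ as a sum of products of $k$ transpositions each involving $n+1$, every surviving term of $\pr{n}(\JM{n+1}^k)$ is a permutation whose support avoids $n+1$ and lies among the $\le k$ remaining indices; hence only $\mu$ with $|\mu|\le k$ occur, and the expansion is a finite sum $a_n = \sum_{|\mu|\le k} c_\mu^{(k)}\,\classsum{\mu}{n}$.

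The crux is that the coefficients $c_\mu^{(k)}$ do not depend on $n$. This is precisely the stability built into the normalized class sums $\classsum{\mu}{n}$ of Definition \ref{def-identity-of-a}: the $n$-dependence in the count of permutations of a fixed reduced cycle type is absorbed into the normalization of $\classsum{\mu}{n}$ (for instance $\pr{n}(\JM{n+1}^2) = n\cdot 1_n = \classsum{(1)}{n}$, so $c^{(2)}_{(1)} = 1$ for every $n$). Formally, $\JM{n+1}^k$ together with the projection $\pr{n}$ is defined compatibly across the tower of symmetric groups, which is the content of the algebra of partial permutations of Ivanov--Kerov \cite{IK99}, so the structure constants $c_\mu^{(k)}$ are genuinely $n$-independent, with $\classsum{\mu}{n}=0$ whenever $|\mu| > n$. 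Granting this, for every $\lambda$ we obtain $\moment{k}(\lambda) = \sum_{|\mu|\le k} c_\mu^{(k)}\,\norcharrep{\lambda}(\classsum{\mu}{|\lambda|}) = \sum_{|\mu|\le k} c_\mu^{(k)}\,\shiftpwr{\mu}(\lambda)$, the terms with $|\mu|>|\lambda|$ contributing $0$ on both sides. Thus $\moment{k} = \sum_{|\mu|\le k} c_\mu^{(k)}\,\shiftpwr{\mu}$ is a finite $\MB{C}$-linear combination of the basis $\{\shiftpwr{\mu}\}$ and so lies in $\ShiftSym{}$. I expect this stability step to be the main obstacle, since it is where the representation-theoretic and combinatorial input is concentrated.

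For the Boolean cumulants, note that the zeroth moment satisfies $\moment{0}=1$ (the transition measure is a probability measure), so isolating the top term $i=k$ in the recursion of Remark \ref{rmk-recursive-for-boolean-moments} gives
\begin{equation*}
\Boolean{k} = \moment{k} - \sum_{i=1}^{k-1} \moment{k-i}\,\Boolean{i}.
\end{equation*}
Since $\ShiftSym{}$ is closed under addition and multiplication, and each $\moment{j}\in\ShiftSym{}$ by the previous paragraphs, a straightforward induction on $k$ (with base case $\Boolean{1}=\moment{1}$) shows $\Boolean{k}\in\ShiftSym{}$ for all $k$, completing the proposed proof.
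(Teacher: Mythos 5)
Your proposal is correct, but it proves the proposition by a genuinely different route than the source the paper relies on: the paper gives no proof at all, citing Lassalle \cite{L09}, whose argument (as Remark \ref{remark-moments-as-sym} indicates) produces explicit closed formulas $\moment{k}(\lambda) = h_k(A_\lambda)$ and $\Boolean{k}(\lambda) = (-1)^{k-1}e_k(A_\lambda)$ for a $\lambda$-dependent alphabet, from which membership in $\ShiftSym{}$ is read off. You instead run the representation-theoretic argument: realize $\moment{k}(\lambda)$ as $\norcharrep{\lambda}(\pr{n}(\JM{n+1}^k))$, check centrality, expand in the normalized class sums $\classsum{\mu}{n}$ with $n$-independent coefficients, and use $\norcharrep{\lambda}(\classsum{\mu}{n}) = \shiftpwr{\mu}(\lambda)$. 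This is in fact the mechanism underlying the paper's own Theorem \ref{thm-moment-images} combined with Theorem \ref{thm-main-1} (which together reprove the proposition, since $\primaryiso(\ctilde{k}) = \moment{k}$ and $\primaryiso$ lands in $\ShiftSym{}$ without ever invoking Proposition \ref{prop-moments-as-sym}), so your proof is arguably more in the spirit of the paper than the citation it uses. The one step you defer to \cite{IK99} --- the $n$-independence of the coefficients $c^{(k)}_\mu$ --- is correctly identified as the crux and is consistent with the paper's Remark \ref{remark-FH}; if you want it self-contained, group the $k$-tuples $(j_1,\dots,j_k)$ by the set partition of $\{1,\dots,k\}$ recording which entries coincide: each surviving partition with $d$ blocks contributes exactly one copy of $\classsum{\mu(\pi)}{n}$ for a fixed $\mu(\pi)\vdash d$ (since \eqref{eqn-alt-a} is itself a sum over ordered tuples of distinct values), so $c^{(k)}_\mu$ is a count of set partitions of $\{1,\dots,k\}$ and manifestly independent of $n$. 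The Boolean-cumulant induction via Remark \ref{rmk-recursive-for-boolean-moments} is fine, since $\ShiftSym{}$ is a subalgebra of $\funonyd$; what Lassalle's formulas buy that your argument does not is the precise identification of $\moment{k}$ and $\Boolean{k}$ with complete homogeneous and elementary symmetric functions of a single alphabet, which the paper uses later in Remark \ref{remark-curl-recursive-relations}.
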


\begin{remark} \label{remark-moments-as-sym}
In \cite{L09} Section 5, Lassalle shows that with the appropriate alphabet $A_\lambda$ (which is specific to each partition $\lambda$), 
\begin{equation} \label{eqn-elementary-homogeneous}
\moment{k}(\lambda) = h_k(A_\lambda) \quad\quad\quad \text{and} \quad\quad\quad  \Boolean{k}(\lambda) = (-1)^{k-1}e_k(A_\lambda).
\end{equation}
\end{remark}

\omitt{
\begin{proposition} \cite[Section 5]{L09} \label{prop-moments-as-sym}
As elements of $\funonyd$, $\moment{k}$ and $\Boolean{k}$ belong to $\ShiftSym{}$ with
\begin{equation} \label{eqn-elementary-homogeneous}
\contentmap(h_k) = \moment{k} \quad\quad\quad \text{and} \quad\quad\quad \contentmap(e_k) = (-1)^{k-1}\Boolean{k}.
\end{equation}
\end{proposition}
}

\omitt{
The recursive relationship in Remark \ref{rmk-recursive-for-boolean-moments} can then be seen to follow from the relationship between elementary and homogeneous symmetric functions.}

\omitt{
It is well known that as functions on partitions, the moments and Boolean cumulants for the transition measure are shifted symmetric functions (see \cite{L13} and \cite{L09}). By Proposition \ref{prop-boolean-comoment}, the moments of the co-transition measure is also belong to $\ShiftSym{}$. It is shown in \cite{L13} that in fact,
\begin{equation*}
\moment{k}(\lambda) = h_k(\contentsalpha{\lambda}) \quad\quad\quad \text{and} \quad\quad\quad \Boolean{k}(\lambda) = (-1)^{k-1}e_k(\contentsalpha{\lambda}).
\end{equation*} 
In other words we have
\begin{equation*}
\contentmap(h_k) = \moment{k} \quad\quad\quad \text{and} \quad\quad\quad \contentmap(e_k) = \Boolean{k}.
\end{equation*}
}


\section{The Heisenberg category $\Heisencat$} 

In \cite{Kho14}, Khovanov defined an additive $\MB{C}$-linear monoidal category $\Heisencat$ which we will call the {\emph{Heisenberg category}}. The objects in $\Heisencat$ are generated by two objects $Q_+$ and $Q_-$. Following the notation of \cite{Kho14}, we denote $Q_{\epsilon_1} \otimes \cdots \otimes Q_{\epsilon_m}$ by $Q_\epsilon$ where $\epsilon = \epsilon_1 \ldots \epsilon_m$ is a finite sequence of pluses and minuses. The unit object, $\UnitModule$, corresponds to the empty sequence $Q_\emptyset$.

The collection of morphisms $\Hom_{\Heisencat}(Q_\epsilon, Q_{\epsilon'})$, for two sequences $\epsilon$ and $\epsilon'$ is the $\C$-vector space spanned by planar diagrams modulo some local relations. The diagrams are oriented compact 1-manifolds embedded in the strip $\MB{R} \times [0,1]$, modulo rel boundary isotopies. The endpoints of the 1-manifolds are located at $\{1, \ldots, m \} \times \{ 0 \}$ and $\{ 1, \ldots, n \} \times \{ 1 \}$, where $m$ and $n$ are the lengths of $\epsilon$ and $\epsilon'$, respectively. Further, the orientation of the 1-manifold at the endpoints must match the signs in the sequences $\epsilon$ and $\epsilon'$. Triple intersections are not allowed. 
\newpage
\begin{example}
The diagram 
\begin{center}
        \begin{tikzpicture}
        	\draw[dashed, thick] (0,0) -- (6,0);
        	\draw[dashed, thick] (0,3) -- (6,3);
        	
        	\node at (2,-0.2) {$-$};
        	\node at (3,-0.2) {$-$};
        	\node at (4,-0.2) {$+$};
        	
        	\node at (1,3.2) {$-$};
        	\node at (2,3.2) {$+$};
        	\node at (3,3.2) {$-$};
        	\node at (4,3.2) {$-$};
        	\node at (5,3.2) {$+$};
        	
        	\draw[thick,->] (4,0.05) .. controls (2,1) .. (2,2.95);
        	\draw[thick,->] (4,2.95) .. controls (4,2) and (3,1) .. (3,2) .. controls (3,3) and (4,1) .. (3,0.05);
        	\draw[thick,->] (1,2.95) .. controls (4,1) and (2,1) .. (2,0.05);
        	\draw[thick,->] (3,2.95) .. controls (3,1.95) and (5,1.95) .. (5,2.95);
        \end{tikzpicture}
\end{center}
is a morphism from $Q_{--+}$ to $Q_{-+--+}$.
\end{example}
The composition of two morphisms is achieved by stacking diagrams. The local relations for diagrams are:
\vspace{6mm}
\begin{equation} \label{up down double crossings}
        \begin{tikzpicture}[baseline=(current bounding box.center)]
        	
        	\draw[thick,->] (0,0) .. controls (1,1) .. (0,2);
        	\draw[thick,->] (1,2) .. controls (0,1) .. (1,0);
        	
        	\node at (2,1) {$=$};
        	
        	\draw[thick,->] (3,0) -- (3,2);
        	\draw[thick,->] (4,2) -- (4,0);
        	
        	
        	\draw[thick,->] (6,2) .. controls (7,1) .. (6,0);
        	\draw[thick,->] (7,0) .. controls (6,1) .. (7,2);
        	
        	\node at (8,1) {$=$};
        	
        	\draw[thick,->] (9,2) -- (9,0);
        	\draw[thick,->] (10,0) -- (10,2);
        	
        	\node at (11,1) {$-$};
        	
        	\draw[thick,->] (12,2) arc (180:360:0.75);
        	\draw[thick,->] (13.5,0) arc (0:180:0.75);
        \end{tikzpicture}
    \end{equation}
    
    \vspace{4mm}
    
    \begin{equation} \label{anti-clockwise and left curl}
        \begin{tikzpicture}[baseline=(current bounding box.center)]
        	\draw[<-,thick] (0,1) arc (180:-180:.5);
        	
        	\node at (1.5,1) {$= 1$};
        	
        	\draw[thick] (4,1) .. controls (4,1.5) and (4.7,1.5) .. (4.9,1);
          	\draw[thick] (4,1) .. controls (4,0.5) and (4.7,0.5) .. (4.9,1);
          	\draw[thick] (5,0) .. controls (5,0.5) .. (4.9,1) ;
          	\draw[thick,->] (4.9,1) .. controls (5,1.5) .. (5,2);
          	
        	\node at (5.5,1) {$= 0$};
        \end{tikzpicture}
    \end{equation}
    
    \vspace{4mm}
    
    \begin{equation} \label{eqn-symmetric-group-relations}
        \begin{tikzpicture}[baseline=(current bounding box.center)]
        	
        	\draw[->,thick] (0,0) .. controls (1,1) .. (0,2);
        	\draw[->,thick] (1,0) .. controls (0,1) .. (1,2);
        	
        	\node at (2,1) {$=$};
        	
        	\draw[->,thick] (3,0) -- (3,2);
        	\draw[->,thick] (4,0) -- (4,2);
        	
        	
        	\draw[->,thick] (6,0) -- (8,2);
        	\draw[->,thick] (8,0) -- (6,2);
        	
        	\draw[->,thick] (7,0) .. controls (8,1) .. (7,2);
        	
        	\node at (9,1) {$=$};
        	
        	\draw[->,thick] (10,0) -- (12,2);
        	\draw[->,thick] (12,0) -- (10,2);
        	
        	\draw[->,thick] (11,0) .. controls (10,1) .. (11,2);	
	\node at (12.3,0) {.};
        \end{tikzpicture}
\end{equation}

\vspace{6mm}

The relations \eqref{up down double crossings} and \eqref{anti-clockwise and left curl} are motivated by the Heisenberg relation $pq = qp + 1$, where $p$ and $q$ are the two generators of the Heisenberg algebra, while the relations \eqref{eqn-symmetric-group-relations} are motivated by the symmetric group relations.

It is convenient to denote a right curl by a dot on a strand, and a sequence of $d$ right curls by a dot with a $d$ next to it:
\begin{center}
        \begin{tikzpicture}
        	\draw[thick,->] (0,0) -- (0,2);
        	\draw (0,1) node {\textbullet};
        	
        	\draw (1,1) node {$:=$};
        	
        	\draw[thick] (3,1) .. controls (3,1.5) and (2.3,1.5) .. (2.1,1);
        	\draw[thick] (3,1) .. controls (3,0.5) and (2.3,0.5) .. (2.1,1);
        	\draw[thick] (2,0) .. controls (2,0.5) .. (2.1,1);
        	\draw[thick] (2.1,1) .. controls (2,1.5) .. (2,2) [->];
        	
	\node at (3.4,0) {,};
	
        	\draw[->,thick] (5,0) -- (5,2);
        	\draw (5,1) node {\textbullet};
        	\draw (4.7,1) node {$d$};
        	
        	\draw (6,1) node {$:=$};
        	
        	\draw[->,thick] (7,0) -- (7,2);
        	
        	\draw (7,0.4) node {\textbullet};
        	\draw (7,0.8) node {\textbullet};
        	\draw (7,1.2) node {\textbullet};
        	\draw (7,1.6) node {\textbullet};
        	
        	\draw (8.4,1) node {$d$ dots};
	
	\draw [decorate,decoration={brace,amplitude=10pt},xshift=-4pt,yshift=0pt]
(7.4,1.7) -- (7.4,.3) node [black,midway,xshift=0cm,yshift = .6cm] 
{};
	
	\node at (8,0) {.};
        	
        \end{tikzpicture}
\end{center}

A right curl can be moved across intersection points, according to the following ``dot-sliding relations'' \cite{Kho14}:
    \begin{align*}
    	\begin{tikzpicture}[baseline=(current bounding box.center)]
    		\draw[thick,->] (0,0) to (1,1);
    		\draw[thick,->] (1,0) to (0,1);
    		\node at (0.25,0.75) {\textbullet};
    	\end{tikzpicture}
    		\quad &= \quad 
    	\begin{tikzpicture}[baseline=(current bounding box.center)]
    		\draw[thick,->] (0,0) to (1,1);
    		\draw[thick,->] (1,0) to (0,1);
    		\node at (0.75,0.25) {\textbullet};
    	\end{tikzpicture}
    		\quad + \quad
    	\begin{tikzpicture}[baseline=(current bounding box.center)]
    		\draw[thick,->] (0,0) to (0,1);
    		\draw[thick,->] (1,0) to (1,1);
		\node at (1.2,0) {,};
    	\end{tikzpicture} 
    		\\ \\
    	\begin{tikzpicture}[baseline=(current bounding box.center)]
    		\draw[thick,->] (0,0) to (1,1);
    		\draw[thick,->] (1,0) to (0,1);
    		\node at (0.25,0.25) {\textbullet};
    	\end{tikzpicture}
    		\quad &= \quad 
    	\begin{tikzpicture}[baseline=(current bounding box.center)]
    		\draw[thick,->] (0,0) to (1,1);
    		\draw[thick,->] (1,0) to (0,1);
    		\node at (0.75,0.75) {\textbullet};
    	\end{tikzpicture}
    		\quad + \quad
    	\begin{tikzpicture}[baseline=(current bounding box.center)]
    		\draw[thick,->] (0,0) to (0,1);
    		\draw[thick,->] (1,0) to (1,1);
		\node at (1.2,0) {.};
    	\end{tikzpicture}
    \end{align*}

This observation easily generalizes to

    \begin{equation} \label{k dots 1}
    	\begin{tikzpicture}[baseline=(current bounding box.center)]
    		\draw[thick,->] (0,0) to (1,1);
    		\draw[thick,->] (1,0) to (0,1);
    		\node at (0.25,0.75) {\textbullet};
    		\node at (0.5,0.85) {$k$};
    	\end{tikzpicture} 
			\quad = \quad 
    	\begin{tikzpicture}[baseline=(current bounding box.center)]
    		\draw[thick,->] (0,0) to (1,1);
    		\draw[thick,->] (1,0) to (0,1);
    		\node at (0.75,0.25) {\textbullet};
    		\node at (1,0.35) {$k$};
    	\end{tikzpicture}
    		\quad + \sum_{i = 0}^{k-1} \quad
    	\begin{tikzpicture}[baseline=(current bounding box.center)]
    		\draw[thick,->] (0,0) to (0,1);
    		\draw[thick,->] (1,0) to (1,1);
    		\node at (0,0.5) {\textbullet};
    		\node at (1,0.5) {\textbullet};
    		\node at (-0.3,0.5) {$i$};
    		\node at (2,0.5) {$k-1-i$};
		\node at (3,0) {,};
    	\end{tikzpicture}
	\end{equation}
	
	\begin{equation} \label{k dots 2}
    	\begin{tikzpicture}[baseline=(current bounding box.center)]
    		\draw[thick,->] (0,0) to (1,1);
    		\draw[thick,->] (1,0) to (0,1);
    		\node at (0.25,0.25) {\textbullet};
    		\node at (0,0.45) {$k$};
    	\end{tikzpicture}
    		\quad = \quad 
    	\begin{tikzpicture}[baseline=(current bounding box.center)]
    		\draw[thick,->] (0,0) to (1,1);
    		\draw[thick,->] (1,0) to (0,1);
    		\node at (0.75,0.75) {\textbullet};
    		\node at (0.55,0.95) {$k$};
    	\end{tikzpicture}
    		\quad + \sum_{i = 0}^{k-1} \quad
    	\begin{tikzpicture}[baseline=(current bounding box.center)]
    		\draw[thick,->] (0,0) to (0,1);
    		\draw[thick,->] (1,0) to (1,1);
    		\node at (0,0.5) {\textbullet};
    		\node at (1,0.5) {\textbullet};
    		\node at (-0.3,0.5) {$i$};
    		\node at (2,0.5) {$k-1-i$};
		\node at (3,0) {.};
    	\end{tikzpicture} 
    \end{equation}

Another consequence of relations \eqref{up down double crossings}-\eqref{eqn-symmetric-group-relations} are the ``bubble moves'' \cite{Kho14}:

\begin{equation} \label{bubble clockwise}
	\begin{tikzpicture}[baseline=(current bounding box.center)]
		\draw[thick,->] (0,0) arc (180:0:0.5);
		\draw[thick](1,0) arc (0:-180:0.5);
		\node at (0,0) {\textbullet};
		\node at (-0.3,0) {$k$};
		
		\draw[thick,->] (1.5,-1) to (1.5,1);
	\end{tikzpicture}
	\enskip = \enskip
	\begin{tikzpicture}[baseline=(current bounding box.center)]
		\draw[thick,->] (0,0) arc (180:0:0.5);
		\draw[thick](1,0) arc (0:-180:0.5);
		\node at (0,0) {\textbullet};
		\node at (-0.3,0) {$k$};
			
		\draw[thick,->] (-0.8,-1) to (-0.8,1);
	\end{tikzpicture}
		\;\;\; + \;\;\; (k+1) \enskip
	\begin{tikzpicture}[baseline=(current bounding box.center)]
		\draw[thick,->] (0,0) to (0,2);
		\node at (0,1) {\textbullet};
		\node at (0.3,1) {$k$};
	\end{tikzpicture}
		\;\;\; - \;\;\; \sum_{i = 0}^{k-2} (k - i - 1)
	\begin{tikzpicture}[baseline=(current bounding box.center)]
		\draw[thick,->] (0,0) to (0,2);
		\node at (0,1) {\textbullet};
		\node at (1,1) {$k-i-2$};
	\end{tikzpicture}
		\enskip
	\begin{tikzpicture}[baseline=(current bounding box.center)]
		\draw[thick,->] (0,0) arc (180:0:0.5);
		\draw[thick](1,0) arc (0:-180:0.5);
		\node at (0,0) {\textbullet};
		\node at (-0.3,0) {$i$};
		\node at (1.4,-1) {,};
		\node at (1.4,1) {};
	\end{tikzpicture}
\end{equation}
	
\begin{equation} \label{bubble anticlockwise}
	\begin{tikzpicture}[baseline=(current bounding box.center)]
		\draw[thick,->] (0,0) arc (-180:0:0.5);
		\draw[thick](1,0) arc (0:180:0.5);
		\node at (0,0) {\textbullet};
		\node at (-0.3,0) {$k$};
			
		\draw[thick,->] (-0.8,-1) to (-0.8,1);
	\end{tikzpicture}
		\enskip = \enskip
	\begin{tikzpicture}[baseline=(current bounding box.center)]
		\draw[thick,->] (0,0) arc (-180:0:0.5);
		\draw[thick](1,0) arc (0:180:0.5);
		\node at (0,0) {\textbullet};
		\node at (-0.3,0) {$k$};
			
		\draw[thick,->] (1.5,-1) to (1.5,1);
	\end{tikzpicture}
		 \;\;\; - \;\;\; \sum_{i = 0}^{k-2} (k - i - 1)
	\begin{tikzpicture}[baseline=(current bounding box.center)]
		\draw[thick,->] (0,0) arc (180:0:0.5);
		\draw[thick](1,0) arc (0:-180:0.5);
		\node at (0,0) {\textbullet};
		\node at (-0.3,0) {$i$};
	\end{tikzpicture}
		\enskip
	\begin{tikzpicture}[baseline=(current bounding box.center)]
		\draw[thick,->] (0,0) to (0,2);
		\node at (0,1) {\textbullet};
		\node at (1,1) {$k-i-2$};
		\node at (1.8,0) {.};
		\node at (1.4,0) {};
	\end{tikzpicture}
\end{equation}

\omitt{
\begin{lemma} \label{curl through intersection} \cite{Kho14}
A right curl can be moved across intersection points, according to the following relations:
    \begin{align*}
    	\begin{tikzpicture}[baseline=(current bounding box.center)]
    		\draw[->] (0,0) to (1,1);
    		\draw[->] (1,0) to (0,1);
    		\node at (0.25,0.75) {\textbullet};
    	\end{tikzpicture}
    		\quad &= \quad 
    	\begin{tikzpicture}[baseline=(current bounding box.center)]
    		\draw[->] (0,0) to (1,1);
    		\draw[->] (1,0) to (0,1);
    		\node at (0.75,0.25) {\textbullet};
    	\end{tikzpicture}
    		\quad + \quad
    	\begin{tikzpicture}[baseline=(current bounding box.center)]
    		\draw[->] (0,0) to (0,1);
    		\draw[->] (1,0) to (1,1);
    	\end{tikzpicture} 
    		\\ \\
    	\begin{tikzpicture}[baseline=(current bounding box.center)]
    		\draw[->] (0,0) to (1,1);
    		\draw[->] (1,0) to (0,1);
    		\node at (0.25,0.25) {\textbullet};
    	\end{tikzpicture}
    		\quad &= \quad 
    	\begin{tikzpicture}[baseline=(current bounding box.center)]
    		\draw[->] (0,0) to (1,1);
    		\draw[->] (1,0) to (0,1);
    		\node at (0.75,0.75) {\textbullet};
    	\end{tikzpicture}
    		\quad + \quad
    	\begin{tikzpicture}[baseline=(current bounding box.center)]
    		\draw[->] (0,0) to (0,1);
    		\draw[->] (1,0) to (1,1);
    	\end{tikzpicture} 
    \end{align*}
\end{lemma}

This observation easily generalizes to the following following Corollary.

\begin{corollary} \label{k dots through intersection}
We can move $k$ dots through an intersection according to the following relations:
    \begin{equation} \label{k dots 1}
    	\begin{tikzpicture}[baseline=(current bounding box.center)]
    		\draw[->] (0,0) to (1,1);
    		\draw[->] (1,0) to (0,1);
    		\node at (0.25,0.75) {\textbullet};
    		\node at (0.5,0.85) {$k$};
    	\end{tikzpicture} 
			\quad = \quad 
    	\begin{tikzpicture}[baseline=(current bounding box.center)]
    		\draw[->] (0,0) to (1,1);
    		\draw[->] (1,0) to (0,1);
    		\node at (0.75,0.25) {\textbullet};
    		\node at (1,0.35) {$k$};
    	\end{tikzpicture}
    		\quad + \sum_{i = 0}^{k-1} \quad
    	\begin{tikzpicture}[baseline=(current bounding box.center)]
    		\draw[->] (0,0) to (0,1);
    		\draw[->] (1,0) to (1,1);
    		\node at (0,0.5) {\textbullet};
    		\node at (1,0.5) {\textbullet};
    		\node at (-0.3,0.5) {$i$};
    		\node at (2,0.5) {$k-1-i$};
    	\end{tikzpicture}
	\end{equation}
	
	\begin{equation} \label{k dots 2}
    	\begin{tikzpicture}[baseline=(current bounding box.center)]
    		\draw[->] (0,0) to (1,1);
    		\draw[->] (1,0) to (0,1);
    		\node at (0.25,0.25) {\textbullet};
    		\node at (0,0.45) {$k$};
    	\end{tikzpicture}
    		\quad = \quad 
    	\begin{tikzpicture}[baseline=(current bounding box.center)]
    		\draw[->] (0,0) to (1,1);
    		\draw[->] (1,0) to (0,1);
    		\node at (0.75,0.75) {\textbullet};
    		\node at (0.55,0.95) {$k$};
    	\end{tikzpicture}
    		\quad + \sum_{i = 0}^{k-1} \quad
    	\begin{tikzpicture}[baseline=(current bounding box.center)]
    		\draw[->] (0,0) to (0,1);
    		\draw[->] (1,0) to (1,1);
    		\node at (0,0.5) {\textbullet};
    		\node at (1,0.5) {\textbullet};
    		\node at (-0.3,0.5) {$i$};
    		\node at (2,0.5) {$k-1-i$};
    	\end{tikzpicture} 
    \end{equation}
\end{corollary}

\begin{proof}
This follows from repeated application of Lemma \ref{curl through intersection}.
\end{proof}

Finally, the rules for ``bubble moves'' tell us how to move clockwise and counterclockwise-oriented circles with $k$ dots on them through strands.

\begin{lemma} \cite{Kho14} \label{lemma-bubble-moves}
	\begin{equation} \label{bubble clockwise}
		\begin{tikzpicture}[baseline=(current bounding box.center)]
			\draw[->] (0,0) arc (180:0:0.5);
			\draw (1,0) arc (0:-180:0.5);
			\node at (0,0) {\textbullet};
			\node at (-0.3,0) {$k$};
			
			\draw[->] (1.5,-1) to (1.5,1);
		\end{tikzpicture}
			\enskip = \enskip
		\begin{tikzpicture}[baseline=(current bounding box.center)]
			\draw[->] (0,0) arc (180:0:0.5);
			\draw (1,0) arc (0:-180:0.5);
			\node at (0,0) {\textbullet};
			\node at (-0.3,0) {$k$};
			
			\draw[->] (-0.8,-1) to (-0.8,1);
		\end{tikzpicture}
			+ (k+1) \enskip
		\begin{tikzpicture}[baseline=(current bounding box.center)]
			\draw[->] (0,0) to (0,2);
			\node at (0,1) {\textbullet};
			\node at (0.3,1) {$k$};
		\end{tikzpicture}
			 - \sum_{i = 0}^{k-2} (k - i - 1)
		\begin{tikzpicture}[baseline=(current bounding box.center)]
			\draw[->] (0,0) to (0,2);
			\node at (0,1) {\textbullet};
			\node at (1,1) {$k-i-2$};
		\end{tikzpicture}
			\enskip
		\begin{tikzpicture}[baseline=(current bounding box.center)]
			\draw[->] (0,0) arc (180:0:0.5);
			\draw (1,0) arc (0:-180:0.5);
			\node at (0,0) {\textbullet};
			\node at (-0.3,0) {$i$};
		\end{tikzpicture}
	\end{equation}
	
	\begin{equation} \label{bubble anticlockwise}
		\begin{tikzpicture}[baseline=(current bounding box.center)]
			\draw[->] (0,0) arc (-180:0:0.5);
			\draw (1,0) arc (0:180:0.5);
			\node at (0,0) {\textbullet};
			\node at (-0.3,0) {$k$};
			
			\draw[->] (-0.8,-1) to (-0.8,1);
		\end{tikzpicture}
			\enskip = \enskip
		\begin{tikzpicture}[baseline=(current bounding box.center)]
			\draw[->] (0,0) arc (-180:0:0.5);
			\draw (1,0) arc (0:180:0.5);
			\node at (0,0) {\textbullet};
			\node at (-0.3,0) {$k$};
			
			\draw[->] (1.5,-1) to (1.5,1);
		\end{tikzpicture}
			 - \sum_{i = 0}^{k-2} (k - i - 1)
		\begin{tikzpicture}[baseline=(current bounding box.center)]
			\draw[->] (0,0) arc (180:0:0.5);
			\draw (1,0) arc (0:-180:0.5);
			\node at (0,0) {\textbullet};
			\node at (-0.3,0) {$i$};
		\end{tikzpicture}
			\enskip
		\begin{tikzpicture}[baseline=(current bounding box.center)]
			\draw[->] (0,0) to (0,2);
			\node at (0,1) {\textbullet};
			\node at (1,1) {$k-i-2$};
		\end{tikzpicture}
	\end{equation}
\end{lemma}
}

Note that relations \eqref{eqn-symmetric-group-relations} imply that there is a homomorphism $\SymtoHplus{n}: \MB{C}[\Sy{n}] \rightarrow \EndUpStrands{n}$ which sends
\vspace{1mm}
\begin{center}
\begin{tikzpicture}

\draw[->,thick] (-.3,0)--(1,0);
\draw[thick] (-.3,.1)--(-.3,-.1);
\node at (.2,.3) {$\SymtoHplus{n}$};
\node at (-1,0) {$s_k$};

\draw[thick,->] (2,-.8) -- (2,.8);
\draw[thick,->] (3.2,-.8) -- (3.2,.8);
\draw[thick,->] (3.7,-.8) to [out =90,in = 270] (4.2,.8);
\draw[thick,->] (4.2,-.8) to [out =90,in = 270] (3.7,.8);
\draw[thick,->] (4.7,-.8) -- (4.7,.8);
\draw[thick,->] (5.9,-.8) -- (5.9,.8);
\node at (6.2,-.6) {$.$};

\node at (2.6,0) {\dots};
\node at (5.3,0) {\dots};

\draw [decorate,decoration={brace,amplitude=10pt},xshift=-4pt,yshift=0pt]
(3.4,-1) -- (2,-1) node [black,midway,xshift=0cm,yshift = -.6cm] 
{\footnotesize $k$-$1$ strands};
\draw [decorate,decoration={brace,amplitude=10pt},xshift=-4pt,yshift=0pt]
(6.1,-1) -- (4.7,-1) node [black,midway,xshift=0cm,yshift = -.6cm] 
{\footnotesize $n$-$k$-$1$ strands};

\end{tikzpicture}
\end{center}
Diagrammatically, for $x \in \MB{C}[\Sy{n}]$ we set
\begin{center} \label{picture-sym-group-emedding}
\begin{tikzpicture}

\node at (-3.5,0) {$\SymtoHplus{n}(x) \;\; =: \;\;$};

\draw[thick,->] (-1.7,-1)--(-1.7,1);
\draw[thick,->] (-.8,-1)--(-.8,1);
\draw[thick,->] (-2,-1)--(-2,1);


\node at (-1.4,.7) {$\cdot$};
\node at (-1.2,.7) {$\cdot$};
\node at (-1.0,.7) {$\cdot$};

\draw[fill=white,thick] (-2.3,-.3) rectangle (-.5,.3);

\node at (-1.3,0) {\scriptsize{$x$}};

\node at (-.3,-.9) {.};

\draw [decorate,decoration={brace,amplitude=10pt},xshift=-4pt,yshift=0pt]
(-.6,-1.2) -- (-2,-1.2) node [black,midway,xshift=0cm,yshift = -.6cm] 
{\footnotesize $n$ strands};

\end{tikzpicture}
\end{center}

The appearance of the group algebra $\MB{C}[\Sy{n}]$ as endomorphisms in $\Heisencat$ is responsible for the connection between $\Heisencat$ and the representation theory of symmetric groups.

\omitt{$\SymtoHplus{n}$ is the since it provides a way of identifying idempotents in $\EndUpStrands{n}$ via Young idempotents in $\MB{C}[\Sy{n}]$. Recall that the Karoubi envelope of a category $\MC{C}$ is the category $\Karoubi{\MC{C}}$ whose objects are pairs $(P,e)$ where $P$ is an object in $\MC{C}$ and $e$ is an idempotent $e: P \rightarrow P$ so that $e^2 = e$. Set $\MC{H} := \Karoubi{\Heisencat}$. The motivation for studying $\Heisencat$ is that $\MC{H}$ conjecturally categorifies an integral form of the Heisenberg algebra $\integralHeisen$, meaning that it is expected that there is an isomorphism $\integralHeisen \rightarrow K_0(\MC{H})$ with $K_0(\MC{H})$ the Grothendieck group of $\MC{H}$. In \cite{Kho14} an injective homomorphism is constructed from $\integralHeisen$ to $K_0(\MC{H})$, but it has yet to be shown that this map is surjective.}


\subsection{The endomorphism algebra $\Hcenter$}

Let $\Hcenter$ denote the center of $\Heisencat$, that is, the algebra of endomorphisms of the monoidal unit object $\UnitModule$. Diagrammatically, the algebra $\Hcenter$ is the commutative $\MB{C}$-algebra spanned by all closed diagrams, with multiplication given by juxtaposition of diagrams.  The algebra structure of $\Hcenter$ was determined by Khovanov in \cite{Kho14}.  Let $\MB{C}[c_0,c_1,c_2, \dots]$ be the polynomial algebra in countably many indeterminants $\{c_i\}_{i \geq 0}$. 

\begin{theorem} \label{Thm-End-Iso} \cite[Prop. 3]{Kho14}
The map $\psi_0: \MB{C}[c_0, c_1, \dots ] \rightarrow \Hcenter$ which sends 
\begin{equation} \label{eqn-what-is-c_k}
\begin{tikzpicture}

\node at (-1.1,2) {$c_k$};

\draw[->,thick] (-.5,2)--(.8,2);
\draw[thick] (-.5,1.9)--(-.5,2.1);
\node at (.2,2.5) {$\psi_0$};

\draw[thick] (2,2) circle (.5cm);

\draw[thick] (1.5,2) -- (1.6,1.9);
\draw[thick] (1.5,2) -- (1.4,1.9);

\draw[fill=black] (1.65,2.35) circle (.08cm);

\node at (1.4,2.7) {$k$};

\end{tikzpicture}
\end{equation}
is an algebra isomorphism.
\end{theorem}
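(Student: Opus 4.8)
The plan is to check that $\psi_0$ is an algebra homomorphism and then to prove it is bijective, splitting the latter into surjectivity, proved by a diagrammatic reduction, and injectivity --- i.e. algebraic independence of the $c_k$ --- which I would extract from Khovanov's functors to the symmetric groups. That $\psi_0$ is a well-defined homomorphism is immediate: $\Hcenter$ is commutative with multiplication given by juxtaposing closed diagrams, and $\psi_0$ sends the monomial $c_{k_1}\cdots c_{k_r}$ to the disjoint union of the clockwise circles carrying $k_1,\dots,k_r$ dots defined in \eqref{eqn-what-is-c_k}, which is independent of the order of the factors by isotopy rel boundary. It remains to show this map is a bijection.

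To prove surjectivity I would show that, modulo the local relations, every closed diagram is a $\MB{C}$-linear combination of disjoint unions of clockwise dotted circles. Order closed diagrams lexicographically by (number of crossings, nesting depth, number of dots) and induct. At a crossing of two like-oriented strands, relations \eqref{eqn-symmetric-group-relations} reduce the corresponding symmetric-group sub-diagram, so that after closing up one obtains a disjoint union of equally-oriented circles; at an oppositely-oriented bigon, the double-crossing relations \eqref{up down double crossings} trade the two crossings for parallel strands plus a cap-cup term that merges or splits circles, strictly lowering the invariant, while the dot-sliding relations \eqref{k dots 1}--\eqref{k dots 2} move all dots into a standard position at the cost of only lower terms. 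Once no crossings remain the diagram is a disjoint union of possibly nested simple circles with dots; the bubble-move relations \eqref{bubble clockwise}--\eqref{bubble anticlockwise} drag one bubble across the strands of another and un-nest them, again producing strictly simpler corrections. Finally each isolated counterclockwise dotted circle is rewritten through the clockwise ones: \eqref{anti-clockwise and left curl} gives $\ctilde{0}=1$ and $\ctilde{1}=0$, and for $k\ge 2$ the recursion between $c_k$ and $\ctilde{k}$ produced by \eqref{bubble clockwise}--\eqref{bubble anticlockwise} expresses $\ctilde{k}$ as a polynomial in $c_0,c_1,\dots$. Hence the $c_k$ span $\Hcenter$.

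Injectivity amounts to the algebraic independence of the $c_k$, and here the relations alone are insufficient: they bound $\Hcenter$ from above but give no lower bound, so I would import one from representation theory via the surjections $f_n^{\Heisencat}\colon \Hcenter\to Z(\MB{C}[\Sy{n}])$ attached to the functors $\htobimod{n}$. Evaluating $\htobimod{n}$ on the closed diagram $c_k$ is a finite computation identifying the scalar by which the central element $f_n^{\Heisencat}(c_k)$ acts on $\simplerep{\lambda}$, for $\lambda\vdash n$, with $\norcharrep{\lambda}\big(\sum_{i=1}^{n} s_i\cdots s_{n-1}\JM{n}^{k}s_{n-1}\cdots s_i\big)$, which by \eqref{eqn-comoments-algebraic} equals the Boolean cumulant $\Boolean{k+2}(\lambda)=|\lambda|\comoment{k}(\lambda)$. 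Thus, compatibly in $n$, $c_k$ is recorded by the function $\lambda\mapsto\Boolean{k+2}(\lambda)$ on $\partitions$; by Proposition \ref{prop-moments-as-sym} these lie in $\ShiftSym{}$, and they are algebraically independent there, equivalently (through the triangular moment--cumulant relation of Remark \ref{rmk-recursive-for-boolean-moments}) the moments $\moment{k}$ are. A nonzero polynomial relation among the $c_k$ would therefore yield, after applying the family $\{f_n^{\Heisencat}\}$ and the characters $\{\norcharrep{\lambda}\}$, a nontrivial polynomial relation among the $\Boolean{k+2}$ as functions on partitions, which is impossible; hence $\ker\psi_0=0$.

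The genuine obstacle is this injectivity step. The spanning argument is an intricate but essentially mechanical induction whose only delicate point is choosing a termination invariant that every relation --- in particular the bubble-move corrections and the $c_k$--$\ctilde{k}$ recursion --- strictly decreases. Algebraic independence, by contrast, cannot be seen diagrammatically and depends on two inputs that must be supplied from outside the graphical calculus: the explicit evaluation of the functors $\htobimod{n}$ on a dotted clockwise circle, identifying $f_n^{\Heisencat}(c_k)$ with Biane's central element, and the classical fact that the moments (equivalently Boolean cumulants) of Kerov's transition measure are algebraically independent functions on $\partitions$.
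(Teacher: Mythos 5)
The paper does not prove this statement --- it is quoted verbatim from \cite[Prop.~3]{Kho14} --- so there is no in-paper argument to compare against; what you have written is essentially a reconstruction of Khovanov's own proof (graphical reduction for surjectivity, the functors $\htobimod{n}$ for injectivity), and its overall architecture is sound. Two points deserve care. First, in the surjectivity step your lexicographic invariant (crossings, nesting depth, dots) is not strictly decreased by the dot-sliding moves \eqref{k dots 1}--\eqref{k dots 2}: the leading term has the same crossing and dot count, so termination has to come from convergence to a chosen normal position (all dots collected at a fixed point of each circle) rather than from the invariant itself; as stated the induction does not close. This is repairable and is exactly the bookkeeping Khovanov does, but it is the ``delicate point'' you identify and it is not yet supplied. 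Second, your injectivity step ultimately rests on the claim that the functions $\lambda\mapsto\Boolean{k+2}(\lambda)$ (equivalently, via the triangular relation \eqref{eqn-recursive-rln-moment-boolean}, the moments $\moment{k}$) are algebraically independent in $\funonyd$. The paper only records that these lie in $\ShiftSym{}$ (Proposition \ref{prop-moments-as-sym}); algebraic independence is a genuinely external input (it follows from Kerov's result that the moments of the transition measure freely generate the algebra of polynomial functions on Young diagrams, or from the fact that the top homogeneous component of $\Boolean{k+2}$ is a nonzero multiple of $p_{k+1}$), and it must be cited or proved, since without it the kernel computation collapses. Note also that using it here is not circular with respect to this paper: Theorem \ref{thm-moment-images} and Lemma \ref{lemma-what-is-c_k-in-center} do not depend on the present theorem. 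With those two gaps filled, your argument is correct and is the standard one.
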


Henceforth we will freely identify $c_k$ with its image in $\Hcenter$. Another natural set of diagrams to consider are the counterclockwise-oriented circles with $k$ right-twist curls on them. Set
\begin{center}
\begin{tikzpicture}
\node at (6.3,2) {$\ctilde{k} \;\;\; := \;\;\;$};

\draw[thick] (8,2) circle (.5cm);

\draw[thick] (7.5,2) -- (7.6,2.1);
\draw[thick] (7.5,2) -- (7.4,2.1);

\draw[fill=black] (7.6,2.35) circle (.08cm);

\node at (7.3,2.9) {$k$};

\node at (8.8,1.6) {$.$};
\end{tikzpicture}
\end{center}
It follows from the relations in \eqref{anti-clockwise and left curl} that $\ctilde{0} = 1$ and $\ctilde{1} = 0$. 

\begin{lemma} \cite[Prop. 2]{Kho14} \label{lemma-c-ctilde-rln}
For $k > 0$, 
	\begin{equation} \label{anticlockwise to clockwise}
		\ctilde{k+1} = \sum_{i = 0}^{k-1} \tilde{c}_i c_{k-1-i}.
	\end{equation}
\end{lemma}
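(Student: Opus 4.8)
The plan is to recognize the right-hand side of \eqref{anticlockwise to clockwise} as a convolution and to repackage the whole family of identities as a single relation between generating series. Introduce $c(t)=\sum_{i\ge 0}c_i\,t^i$ and $\tilde c(t)=\sum_{j\ge 0}\ctilde{j}\,t^j$ in $\Hcenter[[t]]$. Since $\sum_{i=0}^{k-1}\ctilde{i}\,c_{k-1-i}$ is exactly the coefficient of $t^{k+1}$ in $t^2\,\tilde c(t)\,c(t)$, the assertion for all $k\ge 1$, together with the boundary values $\ctilde{0}=1$ and $\ctilde{1}=0$ coming from \eqref{anti-clockwise and left curl}, is equivalent to the single identity
\begin{equation*}
\tilde c(t)\bigl(1-t^2 c(t)\bigr)=1,\qquad\text{i.e.}\qquad \tilde c(t)=\bigl(1-t^2 c(t)\bigr)^{-1}.
\end{equation*}
The exponents $0$ and $1$ are absorbed by the two boundary values, and every higher exponent is precisely one instance of \eqref{anticlockwise to clockwise}. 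Thus it suffices to prove this inverse-series relation, which is the categorified analogue of the ``infinite Grassmannian'' condition relating the two families of curls.

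To establish the inverse-series identity diagrammatically, I would produce a master move turning a single counterclockwise bubble into a clockwise one. Concretely, start from $\ctilde{k+1}$, the counterclockwise circle carrying $k+1$ right curls, and isotope it so that one right curl is exposed as a crossing of two oppositely oriented strands. Applying the double-crossing relation \eqref{up down double crossings} resolves this crossing into two contributions: one that reconnects the diagram as a single counterclockwise loop carrying one fewer curl (feeding the recursion), and a cap-cup term that pinches the picture into two disjoint closed components, one clockwise circle and one counterclockwise circle. Using the dot-slide relations \eqref{k dots 1} and \eqref{k dots 2} to push the remaining dots across the resolved crossing then distributes them between the two resulting bubbles in all admissible ways, producing exactly $\sum_{i=0}^{k-1}\ctilde{i}\,c_{k-1-i}$. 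The bubble-slide relations \eqref{bubble clockwise} and \eqref{bubble anticlockwise} encode the same information in packaged form and may be used instead to slide the freed sub-bubble off the strand.

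The main obstacle is the crossing resolution together with the accompanying dot bookkeeping. One must verify that applying \eqref{up down double crossings} to the exposed curl yields precisely one clockwise and one counterclockwise component (rather than a single self-linked loop), and that redistributing the surplus dots via \eqref{k dots 1}--\eqref{k dots 2} produces every pair $(\ctilde{i},c_{k-1-i})$ with $0\le i\le k-1$ with coefficient exactly $1$ and no spurious terms. Orientations are delicate here, since the two strands at the crossing run oppositely and the two forms of \eqref{up down double crossings} differ by the sign of the cap-cup term; one must track which resolution creates the extra closed component and confirm the endpoints of the sum, namely $i=0$ (where $\ctilde{0}=1$ recovers the bare clockwise bubble $c_{k-1}$) and $i=k-1$ (where $c_0$ appears). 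Once this single resolution is checked, induction on $k$ --- equivalently, reading off the coefficients in the inverse series above --- completes the proof.
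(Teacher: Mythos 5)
The paper itself offers no proof of this lemma; it is imported verbatim from \cite[Prop.~2]{Kho14}, so your argument has to stand on its own. Your generating-series repackaging $\tilde c(t)\bigl(1-t^2c(t)\bigr)=1$ is correct and is equivalent to \eqref{anticlockwise to clockwise} together with the boundary values $\ctilde{0}=1$, $\ctilde{1}=0$, and the overall shape of your diagrammatic step (expose the self-crossing of one curl, let the remaining dots migrate through it, and collect the disconnected correction terms as the convolution $\sum_i\ctilde{i}c_{k-1-i}$) is indeed the shape of Khovanov's computation. But two points in the middle are wrong or missing. First, \eqref{up down double crossings} is a relation on \emph{double} crossings of oppositely oriented strands; it cannot be ``applied'' to the single self-crossing of a right curl, and planar isotopy cannot manufacture the second crossing you would need. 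The relation that actually produces the splitting is the dot-slide \eqref{k dots 1}--\eqref{k dots 2}, applied to the up--up crossing hidden inside the curl (cf.\ the cup--crossing--cap presentation in the proof of Lemma \ref{lemma-right-curl-to-JM}): each elementary slide consumes one dot and emits an identity smoothing, and it is that smoothing which disconnects the picture into a pair of circles, one clockwise and one counterclockwise, carrying $i$ and $k-1-i$ dots. In your write-up the dot-slides are assigned only a bookkeeping role, so the mechanism that generates the sum is misidentified.

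Second, and more seriously, the term you describe as ``a single counterclockwise loop carrying one fewer curl (feeding the recursion)'' cannot survive. The right-hand side of \eqref{anticlockwise to clockwise} involves only $\ctilde{0},\dots,\ctilde{k-1}$, so a leftover term proportional to $\ctilde{k}$ has nowhere to go, and no induction on $k$ can absorb it. In the correct computation the analogous boundary term --- the diagram obtained after all the remaining dots have been pushed through the crossing --- contains a bare left curl and therefore vanishes by the second relation in \eqref{anti-clockwise and left curl}. That vanishing is the crux of the proof: it is exactly what makes the identity close with the stated terms and no others. Your proposal invokes the left-curl relation only for the boundary values $\ctilde{0}$ and $\ctilde{1}$ and never in the diagrammatic step, so as written the argument terminates with an unaccounted-for extra term. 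Repairing it amounts to isotoping $\ctilde{k+1}$ so that one curl presents a left-curl configuration with the other $k$ dots trapped behind its crossing, sliding them out (the surviving term then carries a dot-free left curl and dies), and verifying the orientations of the two circles appearing in each smoothing so that the correction terms are literally $\ctilde{i}\,c_{k-1-i}$ with coefficient one.
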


\omitt{
Let $\cgenfunction$ and $\ctildegenfunction$ be the generating functions
\begin{equation*}
\cgenfunction := \sum_{k =0}^\infty c_kz^k \quad\quad\quad \text{and} \quad\quad\quad \ctildegenfunction := \sum_{k = 0}^\infty \ctilde{k}z^k.
\end{equation*}
Then Lemma \ref{lemma-c-ctilde-rln} can be alternatively stated as
\begin{equation} \label{eqn-gen-series-rln}
\ctildegenfunction = \frac{1}{1-z^2\cgenfunction}.
\end{equation}
If we write
\begin{equation*}
z^{-1}\tilde{c}(z^{-1}) = \sum_{k = 0}^\infty \ctilde{k}z^{-k-1} \quad\quad \text{and} \quad\quad z^{-1}c(z^{-1}) = \sum^\infty_{k=0} c_k z^{-k-1}
\end{equation*} 
then equation \eqref{eqn-gen-series-rln} becomes
\begin{equation} \label{eqn-alternative-c-ctilde}
z^{-1}\tilde{c}(z^{-1}) = \frac{1}{z - z^{-1}c(z^{-1})}
\end{equation}
(note here the similarity to \eqref{eqn-boolean-definition}).
}
\omitt{
The \emph{closure} of a diagram in $\EndUpStrands{n}$, denoted by enclosing it in brackets, is achieved by moving from right to left, joining head to tail by wrapping around the right side of the diagram. For example,
\vspace{-3mm}
\begin{center}	
\begin{equation*} \left[ \quad 
    		\begin{tikzpicture}[baseline=(current bounding box.center)]
    			\draw[->,thick] (0,0) to (0,1);
    			\draw[->,thick] (1,0) to (1,1);
    		\end{tikzpicture}
    	\quad \right] \quad = \quad
    		\begin{tikzpicture}[baseline=(current bounding box.center)]
    			\draw[->,thick] (-0.5,0) arc (180:0:0.5);
    			\draw[thick] (0.5,0) arc (0:-180:0.5);
    			\draw[->,thick] (-1,0) arc (180:0:1);
    			\draw[thick] (1,0) arc (0:-180:1);
    		\end{tikzpicture} \quad \mbox{ and } \quad
		\left[ \quad
			\begin{tikzpicture} [baseline=(current bounding box.center)]
				\draw[->,thick] (0,0) to (1,1);
    			\draw[->,thick] (1,0) to (0,1);
			\end{tikzpicture}
		\quad \right] \quad = 
			\begin{tikzpicture} [baseline=(current bounding box.center)]
				\draw[->,thick] (0,0) .. controls (0.2,0.6) and (0.8,0.6) .. (1,0); 
				\draw[thick] (1,0) .. controls (0.8,-0.6) and (0.2,-0.6) .. (0,0); 
				\draw[->,thick] (0,0) .. controls (-1,1.2) and (1.4,1.2) .. (1.4,0); 
				\draw[thick] (1.4,0) .. controls (1.4,-1.2) and (-1,-1.2) .. (0,0); 
			\end{tikzpicture}
\end{equation*}
\end{center}
}

The final class of elements in $\Hcenter$ we consider are those arising from the closure of permutations (that is, closures of morphisms in the image of $\SymtoHplus{n}$). We define

\begin{center}
\begin{tikzpicture}

\draw[thick] (-1.7,-1)--(-1.7,1);
\draw[thick] (-.8,-1)--(-.8,1);

\node at (-1.7,.6) {\arrowlines};
\node at (-.8,.6) {\arrowlines};

\node at (-1.4,.7) {$\cdot$};
\node at (-1.2,.7) {$\cdot$};
\node at (-1.0,.7) {$\cdot$};

\draw[fill=white,thick] (-2,-.3) rectangle (-.5,.3);

\node at (-1.3,0) {\scriptsize{$k$}};

\node at (.5,0) {$=$};

\draw [decorate,decoration={brace,amplitude=10pt},xshift=-4pt,yshift=0pt]
(1.1,1.2) -- (2.6,1.2) node [black,midway,xshift=0cm,yshift = .6cm] 
{\footnotesize $k$ strands};

\draw[thick] (2.3,-1)--(2.3,1);
\draw[thick] (1.4,-1)--(1.4,1);
\draw[thick] (2.6,-1) to [out =90,in = 270] (2.6,-.5) to [out = 90, in = 270] (1.1,.5) to [out = 90, in =270] (1.1,1);

\node at (2.3,.6) {\arrowlines};
\node at (1.4,.6) {\arrowlines};
\node at (1.1,.6) {\arrowlines};

\node at (2,.7) {$\cdot$};
\node at (1.8,.7) {$\cdot$};
\node at (1.6,.7) {$\cdot$};

\node at (3,-1) {$.$};

\end{tikzpicture}
\end{center}

For $\lambda = (\lambda_1,\dots, \lambda_{r}) \vdash n$, let

\begin{equation} \label{tikzpicture-def-for-lambda}
\begin{tikzpicture}

\draw[thick] (-1.7,-1)--(-1.7,1);
\draw[thick] (-.8,-1)--(-.8,1);

\node at (-1.7,.6) {\arrowlines};
\node at (-.8,.6) {\arrowlines};

\node at (-1.4,.7) {$\cdot$};
\node at (-1.2,.7) {$\cdot$};
\node at (-1.0,.7) {$\cdot$};

\draw[fill=white,thick] (-2,-.3) rectangle (-.5,.3);

\node at (-1.3,0) {\scriptsize{$\lambda$}};

\node at (.5,0) {$:=$};

\draw[thick] (2.3,-1)--(2.3,1);
\draw[thick] (1.4,-1)--(1.4,1);

\draw[fill=white,thick] (2.6,-.3) rectangle (1.1,.3);

\node at (2.3,.6) {\arrowlines};
\node at (1.4,.6) {\arrowlines};

\node at (2,.7) {$\cdot$};
\node at (1.8,.7) {$\cdot$};
\node at (1.6,.7) {$\cdot$};

\node at (3.2,0) {$\cdot$};
\node at (3.4,0) {$\cdot$};
\node at (3.6,0) {$\cdot$};

\draw[thick] (4.5,-1)--(4.5,1);
\draw[thick] (5.4,-1)--(5.4,1);

\node at (4.5,.6) {\arrowlines};
\node at (5.4,.6) {\arrowlines};

\draw[fill=white,thick] (4.2,-.3) rectangle (5.7,.3);

\node at (5.05,0) {\scriptsize{$\lambda_r$}};
\node at (1.85,0) {\scriptsize{$\lambda_1$}};

\end{tikzpicture}
\end{equation}

then we define

\begin{center}
\begin{tikzpicture}

\node at (-4,0) {$\alpha_\lambda \quad := $};

\draw[thick] (0,0) circle (1.8 cm);
\draw[thick] (0,0) circle (1.6 cm);
\draw[thick] (0,0) circle (1 cm);

\draw[fill=white,thick] (-2,-.3) rectangle (-.5,.3);

\node at (-1,.5) {$\cdot$};
\node at (-1.15,.55) {$\cdot$};
\node at (-1.3,.6) {$\cdot$};

\node at (-1.3,0) {$\lambda$};

\node[rotate = 180] at (1.8,0) {\arrowlines};
\node[rotate = 180] at (1.6,0) {\arrowlines};
\node[rotate = 180] at (1,0) {\arrowlines};

\end{tikzpicture}
\end{center}
with $\alpha_k := \alpha_{(k)}$. 

Lemma \ref{lemma-only-cycle-type-matters} below shows that we could replace the permutation in \eqref{tikzpicture-def-for-lambda} by the image under $\SymtoHplus{n}$ of any $g \in \Sy{n}$ such that $\shape{g} = \lambda$. We choose \eqref{tikzpicture-def-for-lambda} because it will be convenient for later calculations.

\newpage

\begin{lemma} \label{lemma-only-cycle-type-matters}
Suppose that $g_1, g_2 \in \Sy{n}$ are conjugate, so that $\shape{g_1} = \shape{g_2}$. Then
\begin{center}
\begin{tikzpicture}

\node at (0,0) {\begin{tikzpicture}[scale = .8]

\draw[thick] (0,0) circle (1.8 cm);
\draw[thick] (0,0) circle (1.6 cm);
\draw[thick] (0,0) circle (1 cm);

\draw[fill=white,thick] (-2,-.3) rectangle (-.5,.3);

\node at (-1,.5) {$\cdot$};
\node at (-1.15,.55) {$\cdot$};
\node at (-1.3,.6) {$\cdot$};

\node at (-1.3,0) {$g_1$};

\node[rotate = -30] at (-1.5,1) {\arrowlines};
\node[rotate = -30] at (-1.3,.9) {\arrowlines};
\node[rotate = -30] at (-.8,.6) {\arrowlines};
\end{tikzpicture}};

\node at (2.5,0) {$=$};

\node at (5,0) {\begin{tikzpicture}[scale = .8]

\draw[thick] (0,0) circle (1.8 cm);
\draw[thick] (0,0) circle (1.6 cm);
\draw[thick] (0,0) circle (1 cm);

\draw[fill=white,thick] (-2,-.3) rectangle (-.5,.3);

\node at (-1,.5) {$\cdot$};
\node at (-1.15,.55) {$\cdot$};
\node at (-1.3,.6) {$\cdot$};

\node at (-1.3,0) {$g_2$};

\node[rotate = -30] at (-1.5,1) {\arrowlines};
\node[rotate = -30] at (-1.3,.9) {\arrowlines};
\node[rotate = -30] at (-.8,.6) {\arrowlines};
\end{tikzpicture}};

\node at (7,-1) {.};

\end{tikzpicture}
\end{center}
\end{lemma}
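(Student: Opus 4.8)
The plan is to reduce the statement to the cyclic symmetry (the ``trace property'') of the closure operation. Write $\operatorname{cl}\colon \EndUpStrands{n} \to \Hcenter$ for the map that sends a diagram in $\End(Q_+^{\otimes n})$ to the closed diagram obtained by joining its top and bottom endpoints around the right-hand side, so that the two diagrams in the lemma are $\operatorname{cl}(\SymtoHplus{n}(g_1))$ and $\operatorname{cl}(\SymtoHplus{n}(g_2))$. The central claim I would isolate is that for any $x, y \in \EndUpStrands{n}$,
\begin{equation*}
\operatorname{cl}(xy) = \operatorname{cl}(yx)
\end{equation*}
in $\Hcenter$. Granting this, the lemma is immediate. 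If $\shape{g_1} = \shape{g_2}$ then $g_1$ and $g_2$ are conjugate, so $g_2 = h g_1 h^{-1}$ for some $h \in \Sy{n}$. Since $\SymtoHplus{n}$ is an algebra homomorphism (this is exactly what relations \eqref{eqn-symmetric-group-relations} guarantee), we have $\SymtoHplus{n}(g_2) = \SymtoHplus{n}(h)\,\bigl(\SymtoHplus{n}(g_1)\SymtoHplus{n}(h^{-1})\bigr)$, and applying the trace property with $x = \SymtoHplus{n}(h)$ and $y = \SymtoHplus{n}(g_1)\SymtoHplus{n}(h^{-1})$ yields
\begin{equation*}
\operatorname{cl}(\SymtoHplus{n}(g_2)) = \operatorname{cl}\bigl(\SymtoHplus{n}(g_1)\SymtoHplus{n}(h^{-1})\SymtoHplus{n}(h)\bigr) = \operatorname{cl}(\SymtoHplus{n}(g_1)),
\end{equation*}
where the final equality uses $\SymtoHplus{n}(h^{-1})\SymtoHplus{n}(h) = \SymtoHplus{n}(1_n) = \mathrm{id}_{Q_+^{\otimes n}}$.

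The remaining work is to establish the trace property, and this is where I would concentrate the argument. The diagram representing $\operatorname{cl}(xy)$ has, reading bottom to top, the box $y$, then the box $x$, together with $n$ nested closing arcs on the right connecting the top endpoints to the bottom endpoints. I would argue that this closed $1$-manifold is ambient isotopic (there being no remaining boundary points) to the closed diagram for $\operatorname{cl}(yx)$: one slides the box $y$ rightward, up and over along the bundle of closing arcs, and back down on top of $x$, producing the stacking with $x$ below $y$, which is precisely $\operatorname{cl}(yx)$. Because every strand carries the coherent upward $Q_+$ orientation, this slide respects orientations and creates no cups, caps, or curls; since the morphism spaces of $\Heisencat$ are defined modulo ambient isotopy and the local relations are themselves isotopy invariant, isotopic closed diagrams represent equal elements of $\Hcenter$. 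This gives $\operatorname{cl}(xy) = \operatorname{cl}(yx)$.

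The main obstacle is making the ``slide around the closure'' rigorous inside Khovanov's strip model: one must verify that the closing arcs can be arranged so that the isotopy carrying $y$ around them remains a legitimate planar isotopy of oriented $1$-manifolds, rather than accidentally producing a local maximum or minimum that would register as a curl and alter the class in $\Hcenter$. Equivalently, one must check that $\operatorname{cl}$ genuinely realizes the categorical trace attached to the biadjunction between $Q_+$ and $Q_-$. Once the closure is identified with this trace, cyclicity becomes a formal consequence of the snake (zig-zag) identities together with the interchange law; thus an alternative to the hands-on isotopy is to express $\operatorname{cl}$ through the cup and cap morphisms and to deduce $\operatorname{cl}(xy) = \operatorname{cl}(yx)$ algebraically from those relations.
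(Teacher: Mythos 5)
Your proposal is correct and takes essentially the same route as the paper: the paper's proof writes $g_1 = hg_2h^{-1}$ and slides $h$ around the closed diagram to cancel it against $h^{-1}$, which is exactly the cyclicity property $\operatorname{cl}(xy)=\operatorname{cl}(yx)$ you isolate, justified by the isotopy invariance of Khovanov's graphical calculus. Your extra care about the slide not creating curls is a reasonable elaboration but not a departure from the paper's argument.
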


\begin{proof}
This is an easy diagrammatic argument which uses the fact that $g_1 = hg_2h^{-1}$ for some $h \in \Sy{n}$. Replacing $g_1$ by $hg_2h^{-1}$, we slide $h$ around the diagram to cancel it with $h^{-1}$.
\end{proof}




\subsection{Diagrams as bimodule homomorphisms} \label{section-bimodules}

In order to establish an isomorphism between $\Hcenter$ and $\ShiftSym{}$, we will make use of some representations of 
the monoidal category $\Heisencat$ constructed in \cite{Kho14}.

To describe these representations, we start by setting some notation for $(\MB{C}[\Sy{k_1}], \MB{C}[\Sy{k_2}])$-bimodules. All inclusions are assumed to be the standard ones $\symembedding{k}{n}: \Sy{k} \rightarrow \Sy{n}$ introduced in Section \ref{sect-sym-group}. Suppose that $k_1,k_2 \leq n$. We write:

\begin{itemize}
	\item $(n)$ for $\MB{C}[\Sy{n}]$ considered as a $(\MB{C}[\Sy{n}],\MB{C}[\Sy{n}])$-bimodule.
	\item $(n)_{k_2}$ for $\MB{C}[\Sy{n}]$ considered as a $(\MB{C}[\Sy{n}],\MB{C}[\Sy{k_2}])$-bimodule.
	\item $_{k_1}(n)$ for $\MB{C}[\Sy{n}]$ considered as a $(\MB{C}[\Sy{k_1}],\MB{C}[\Sy{n}])$-bimodule.
	\item $_{k_1}(n)_{k_2}$ for $\MB{C}[\Sy{n}]$ considered as a $(\MB{C}[\Sy{k_1}],\MB{C}[\Sy{k_2}])$-bimodule.
\end{itemize}

Let $\bimodcat$ be the category whose objects are compositions of induction and restriction functors of symmetric groups. We write
\begin{equation*}
\ind^{n+1}_n := \ind^{\Sy{n+1}}_{\Sy{n}} \quad\quad \text{and} \quad\quad \res^{n+1}_n := \res^{\Sy{n+1}}_{\Sy{n}}.
\end{equation*}
Since induction from $\Sy{n}$ to $\Sy{n+1}$ is given by tensoring on the left by $(n+1)_{n}$ and restriction from $\Sy{n+1}$ to $\Sy{n}$ is given by tensoring on the left by $_n(n+1)$, the objects in $\bimodcat$ can be reinterpreted as $(\MB{C}[\Sy{k_1}], \MB{C}[\Sy{k_2}])$-bimodules for $k_1,k_2 \geq 0$. 

\begin{example}
One object in $\bimodcat$ is the composition
\begin{equation} \label{eqn-functor-comp}
\res^{5}_{4} \circ \ind_{4}^{5} \circ \ind_{3}^{4} \circ \res_{3}^{4}.
\end{equation}
In the language of bimodules, this is the $(\MB{C}[\Sy{4}],\MB{C}[\Sy{4}])$-bimodule
\begin{equation*}
_{4}(5)_{4}(4)_{3}(4).
\end{equation*}
\end{example}

The morphisms in $\bimodcat$ are certain natural transformations of these compositions (or, equivalentely, certain bimodule homomorphisms). Like $\Heisencat$, morphisms in $\bimodcat$ can be presented diagrammatically as oriented compact 1-manifolds embedded in $\MB{R} \times [0,1]$. Unlike $\Heisencat$, in $\bimodcat$ we label the regions of the strip $\MB{R} \times [0,1]$ by non-negative integers, so that if there is an upwards oriented line separating two regions and the right region is labeled by $n$, then the left region must be labeled by $n+1$. The diagram
\begin{center}
		\begin{tikzpicture}
			\draw[->,thick] (1,0) to (1,1);
			\node at (1.5,0.5) {$n$};
			\node at (0,0.5) {$n+1$};
		\end{tikzpicture}
\end{center}
denotes the identity endomorphism of the induction functor $\ind_n^{n+1}$ or alternatively the identity endomorphism of the bimodule $(n+1)_n$. 

If there is a downward oriented line separating two regions and the right is labeled by $n+1$ then the left must be labeled by $n$. The diagram
\begin{center}
		\begin{tikzpicture}
			\draw[->,thick] (1,1) to (1,0);
			\node at (2,0.5) {$n+1$};
			\node at (0.5,0.5) {$n$};
		\end{tikzpicture}
\end{center}
denotes the identity endomorphism of the restriction functor $\res_{n}^{n+1}$ or alternatively the identity endomorphism of the bimodule $_n(n+1)$.

The bimodule maps associated to the four U-turns are:
    \begin{align} 
    	& \begin{tikzpicture}[baseline=(current bounding box.center)] 
    		\draw[->,thick] (0,0) arc (180:0:0.8); 
    		\node at (0.8,0.4) {$n$};
    		\node at (2.3,0.4) {$n+1$};
    	\end{tikzpicture}, \quad
    		(n+1)_n (n+1) \to (n+1), \quad g \otimes h \mapsto gh, \quad g,h \in S_{n+1}, \label{RCap} \\
    		\nonumber \\
    	& \begin{tikzpicture}[baseline=(current bounding box.center)]
    		\draw[->,thick] (0,0.7) arc(-180:0:0.8);
    		\node at (2.3,0.4) {$n$};
    		\node at (0.8,0.4) {$n+1$};
    	\end{tikzpicture}, \quad
    		(n) \to \ _n (n+1)_n, \quad g \mapsto g, \quad g \in S_n, \label{RCup} \\
    		\nonumber \\
    	& \begin{tikzpicture}[baseline=(current bounding box.center)]
    		\draw[<-,thick] (0,0) arc (180:0:0.8);
    		\node at (0.8,0.4) {$n+1$};
    		\node at (2.3,0.4) {$n$};
    	\end{tikzpicture}, \quad
    		_n (n+1)_n \to (n), \quad g \mapsto \pr{n}(g) = \begin{cases} g & g(n+1) = n+1 \\ 0 & \text{otherwise,}\\ \end{cases} \label{LCap} \\
    		\nonumber \\
    	& \begin{tikzpicture}[baseline=(current bounding box.center)] 
    		\draw[<-,thick] (0,0.7) arc(-180:0:0.8);
    		\node at (2.3,0.4) {$n+1$};
    		\node at (0.8,0.4) {$n$};
    	\end{tikzpicture}, \quad (n+1) \to (n+1)_n (n+1), \label{LCup} 
    \end{align}
where the last map is determined by the condition that
\begin{equation*}
1_{n+1} \mapsto \sum_{i = 1}^{n+1} s_i s_{i+1} \cdots s_n \otimes s_n \cdots s_{i+1} s_i = \sum_{g \in \speclcos{n+1}{n}} g \otimes g^{-1}. 
\end{equation*}
\newpage
Finally, the upward crossing is the bimodule map

\begin{equation} \label{UCross}
	\begin{tikzpicture}[baseline=(current bounding box.center)]
		\draw[->,thick] (0,0) to (1,1);
		\draw[->,thick] (1,0) to (0,1);
		\node at (1.3,0.5) {$n$};
		\node at (1.5,0.5) {};
	\end{tikzpicture}, \quad
		(n+2)_n \to (n+2)_n, \quad g \mapsto g s_{n+1}, \quad g \in S_{n+2}.
\end{equation}

Any diagram that has a region labeled with a negative number is set to $\zero$. It is shown in \cite{Kho14} that all diagrams are compatible with isotopy. 

\begin{remark} \label{remark-closed-diagrams-and-center}
Closed diagrams in $\bimodcat$ with outside region labeled by $n$ correspond to $(\MB{C}[\Sy{n}],\MB{C}[\Sy{n}])$-bimodule endomorphisms of $(n)$. The algebra of such bimodule endomorphisms is isomorphic to $Z(\MB{C}[\Sy{n}])$ via the map which sends $f \in \End_{(\MB{C}[\Sy{n}] , \MB{C}[\Sy{n}])}(\MB{C}[\Sy{n}])$ to $f(1_n)$. Thus closed diagrams in $\bimodcat$ may be regarded as elements of the center of the group algebra.
\end{remark}

\omitt{
\begin{proposition} \cite{Kho14} \label{prop-S'-satisfies-rln}
The diagrams in $\bimodcat$ satisfy relations the defining relations \eqref{up down double crossings}, \eqref{anti-clockwise and left curl}, \eqref{eqn-symmetric-group-relations} of $\Heisencat$.
\end{proposition}
}

Khovanov shows that the diagrams in $\bimodcat$ satisfy the defining relations for morphisms in $\Heisencat$.  As a result, given an endomorphism of $\Heisencat$, after labeling the far right region by a non-negative integer, one obtains a well-defined bimodule homomorphism in $\bimodcat$.  An additional relation that can be calculated directly from the definitions of oriented cups and caps is the following:

\begin{equation} \label{eqn-c_kn-ctilde_kn}
\begin{tikzpicture}

\draw[thick] (0,1) circle (20pt);
\node at (-.7,1) {\arrowlines};
\node at (0,1) {\scriptsize{$n$}};
\node at (1.2,1.5) {\scriptsize{$n$ + $1$}};
\node at (2.2,1) {$= \;\; n+1.$};

\end{tikzpicture}
\end{equation}
In other words, the endomorphism $c_0\in \Hcenter$ becomes the scalar $n+1$ in $Z(\MB{C}[\Sy{n+1}])$.
\omitt{
\draw[thick] (7,1) circle (20pt);
\node[rotate = 180] at (6.3,1) {\arrowlines};
\node at (7,1) {\scriptsize{$n$}};
\node at (8.2,1.5) {\scriptsize{$n$ - $1$}};
\node at (9.2,1) {$= 1$};
}

$\bimodcat$ is the direct sum of categories
\begin{equation*}
\bimodcat = \bigoplus_{k = 0}^\infty \bimodcatn{k},
\end{equation*}
where $\bimodcatn{k}$ contains all objects such that induction or restriction starts at $k$ (i.e. the rightmost region of the diagram is labeled by $k$). There are functors $\htobimod{k}: \Heisencat \rightarrow \bimodcatn{k}$ such that the object $\epsilon_1 \epsilon_2 \dots \epsilon_n$ is taken to a composition of induction and restriction functors with $+$ sent to $\ind_{i}^{i+1}$ and $-$ sent to $\res_{i-1}^{i}$ where $i$ in each case is determined by the requirement that induction/restriction begin from $\Sy{k}$. $\htobimod{k}$ takes a diagram from $\Heisencat$ to $\bimodcatn{k}$ by labeling regions so that the rightmost region is labeled with a $k$ and then interpreting the diagram as an element of $\bimodcatn{k}$.

\begin{example}
$\htobimod{5}: \Heisencat \rightarrow \bimodcatn{5}$ takes
\begin{equation*}
(++-+-) \quad \xmapsto{\;\;\; \htobimod{5} \;\;\;} \quad \ind_5^{6} \circ \ind_{4}^{5} \circ \res_{4}^{5} \circ \ind^{5}_4 \circ \res^{5}_4,
\end{equation*}
\begin{equation*}
(-++) \quad \xmapsto{\;\;\; \htobimod{5} \;\;\;} \quad \res^7_6 \ind^7_6 \ind_5^6.
\end{equation*}

\omitt{
and
\vspace{3mm}
\begin{center}
\begin{tikzpicture}

\draw[->,thick] (3.3,4) to [out=270,in = 90] (2.3,0);
\draw[->,thick] (6.3,4) to [out=270,in = 270] (1.8,4);
\draw[->,thick] (3.8,0) to [out =90, in = 270] (4.8,4);
\draw[->,thick] (5.3,0) to [out = 90, in =270] (.3,4);
\draw[black,->,thick] (5.3,2) circle (4mm);

\node at (4.9,2) {\arrowlines};

\node at (7.3,2) {$\xmapsto{\;\;\; \htobimod{5} \;\;\;}$};

\draw[->,thick] (13,4) to [out=270,in = 90] (12,0);
\draw[->,thick] (16,4) to [out=270,in = 270] (11.5,4);
\draw[->,thick] (13.5,0) to [out =90, in = 270] (14.5,4);
\draw[->,thick] (15,0) to [out = 90, in =270] (10,4);

\node at (15,2) {\scriptsize{5}};
\node at (15,3.3) {\scriptsize{4}};
\node at (14.2,.5) {\scriptsize{6}};
\node at (12.7,.7) {\scriptsize{7}};
\node at (13.4,2.2) {\scriptsize{6}};
\node at (13.7,3.2) {\scriptsize{5}};
\node at (11,1.2) {\scriptsize{6}};
\node at (12,2.6) {\scriptsize{5}};
\node at (12.3,3.5) {\scriptsize{4}};

\draw[very thick] (9.5,0) to [out = 105, in = 270] (9.2,2) to [out = 90,in = 255] (9.5,4);
\draw[very thick] (16.5,0) to [out = 75, in = 270] (16.7,2) to [out = 90,in = 285] (16.5,4);

\node at (8.8,2) {\Large{$5 \times $}};

\node at (16.8,0) {$.$};

\end{tikzpicture}
\end{center}

Notice that the clockwise oriented circle on the left is sent to the scalar 5 in accordance with \eqref{eqn-c_kn-ctilde_kn}.

}
\end{example}

\vspace{6mm}


In the remainder of this section we calculate the image of a number of important diagrams in $\Heisencat$ under the functors $\htobimod{k}$.
\begin{lemma} \label{lemma-right-curl-to-JM} \cite[Section 4]{Kho14}
The diagram 
\begin{center}
\begin{tikzpicture}

\draw[thick,rotate = 180] (4,1) .. controls (4,1.5) and (4.7,1.5) .. (4.9,1);
          	\draw[thick,rotate = 180] (4,1) .. controls (4,0.5) and (4.7,0.5) .. (4.9,1);
          	\draw[thick,rotate = 180,<-] (5,0) .. controls (5,0.5) .. (4.9,1) ;
          	\draw[thick,rotate = 180] (4.9,1) .. controls (5,1.5) .. (5,2);

\node at (-4,.-.2) {\scriptsize{$n$-$1$}};
\node at (-4.4,-.9) {\scriptsize{$n$-$2$}};
\node at (-6,-.7) {\scriptsize{$n$}};

\end{tikzpicture}
\end{center}
is the endomorphism of $(n)_{n-1}$ which is right multiplication by $\JM{n}$.
\end{lemma}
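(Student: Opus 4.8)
The plan is to read the right-curl diagram as a composite of three elementary morphisms of $\Heisencat$ --- a left cup, an upward crossing, and a left cap --- and then to compute the associated composite of bimodule maps using the explicit formulas \eqref{LCup}, \eqref{UCross}, and \eqref{LCap}. Applying $\htobimod{n-1}$ labels the rightmost region by $n-1$, so the main (upward) strand separates the region $n-1$ on its right from the region $n$ on its left; this strand is exactly the identity endomorphism of $(n)_{n-1}=\MB{C}[\Sy{n}]$, while the loop of the curl dips one level down and encloses a region labelled $n-2$. Since $(n)_{n-1}$ is free of rank one as a left $\MB{C}[\Sy{n}]$-module, generated by $1_n$, any endomorphism is determined by the image of $1_n$; hence it suffices to show that the composite sends $1_n$ to $\JM{n}$, because right multiplication by $\JM{n}$ is precisely the left-module map taking that value (and it is automatically a bimodule map, since $\JM{n}$ commutes with $\MB{C}[\Sy{n-1}]$).

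Tracking $1_n$ through the three maps is then a direct computation. The left cup \eqref{LCup}, taken with outer region $n-1$ and inner region $n-2$, is the map $(n-1)\to (n-1)_{n-2}(n-1)$ sending $1_{n-1}\mapsto \sum_{g\in\speclcos{n-1}{n-2}} g\otimes g^{-1}=\sum_{i=1}^{n-1}(s_i\cdots s_{n-2})\otimes(s_{n-2}\cdots s_i)$; tensoring on the left by the main strand (and absorbing the $\Sy{n-1}$-factors over the tensor) carries $1_n$ to $\sum_{i=1}^{n-1}(s_i\cdots s_{n-2})\otimes(s_{n-2}\cdots s_i)$. The upward crossing \eqref{UCross} slides the main strand past the ascending strand of the loop, which is right multiplication by $s_{n-1}$, inserting this transposition and giving $\sum_{i=1}^{n-1}(s_i\cdots s_{n-2})\,s_{n-1}\otimes(s_{n-2}\cdots s_i)$. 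Finally the left cap \eqref{LCap}, which is the projection $\pr{n-2}$, contracts the tensor and glues the two halves to produce $\sum_{i=1}^{n-1}s_i\cdots s_{n-2}\,s_{n-1}\,s_{n-2}\cdots s_i$. By the presentation \eqref{eqn-alternative-JM-pres} this sum is exactly $\JM{n}$, which completes the identification.

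The main obstacle is diagrammatic bookkeeping rather than any hard algebra: one must fix the orientations and region labels of the cup, crossing, and cap so that they genuinely compose to the right curl and not, say, to the left curl of \eqref{anti-clockwise and left curl} (which vanishes), and one must verify that the projection coming from the left cap retains every term of the coset sum produced by the left cup, combining the $i$th summand of each side into the single transposition $(i,n)$ with no surviving cross terms. Once the labelling is pinned down, the observation that the minimal coset representatives $\speclcos{n-1}{n-2}=\{\,s_i\cdots s_{n-2}\,\}$ of Lemma \ref{lemma-cosetreps} are precisely the reduced words occurring in \eqref{eqn-alternative-JM-pres} makes the final matching with $\JM{n}$ immediate.
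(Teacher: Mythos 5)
Your strategy is the same as the paper's: factor the right curl as a cup, a crossing, and a cap, and push $1_n$ through the corresponding bimodule maps. Your intermediate formulas and the final identification with $\JM{n}$ via \eqref{eqn-alternative-JM-pres} all agree with the paper's computation. There is, however, a genuine error in your identification of the third elementary morphism. The cap closing off the loop of a right curl is traversed upward on its left leg and downward on its right leg, so it is the \emph{clockwise} cap \eqref{RCap}, whose bimodule map is the multiplication $(n-1)_{n-2}(n-1)\to (n-1)$, $g\otimes h\mapsto gh$. You instead name it the left cap \eqref{LCap} and describe it as the conditional expectation $\pr{n-2}$. These are different maps: the counterclockwise cap is not a contraction of a tensor product at all, and if one actually applied $\pr{n-2}$ termwise to $\sum_i s_i\cdots s_{n-2}s_{n-1}\otimes s_{n-2}\cdots s_i$, every summand would be annihilated, since $s_i\cdots s_{n-2}s_{n-1}$ sends $n-1$ to $n$. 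Your closing remark that one must ``verify that the projection coming from the left cap retains every term'' is therefore aimed at the wrong map; with the correct multiplication cap there is nothing to check, as each summand $g\otimes g^{-1}$ simply contracts to $g\,s_{n-1}\,g^{-1}=(i,n)$ and the sum is $\JM{n}$ by \eqref{eqn-alternative-JM-pres}. In short, the computation you actually carried out is the paper's computation and is correct, but the justification of the final step, as written, invokes a map that would return $0$; replace ``left cap \eqref{LCap} / projection'' by ``right cap \eqref{RCap} / multiplication'' throughout and the proof is complete.
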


\begin{proof}
The right twist curl can be written as the composition of a cup, a crossing, and a cap.
\vspace{2mm}
 \begin{center}
    	\begin{tikzpicture}
    		\draw[->,thick] (0,0) to (0,1);
    		\draw[->,thick] (0,1) to (1,2);
    		\draw[->,thick] (1,1) to (0,2);
    		\draw[<-,thick] (2,1) to (2,2);
    		\draw[->,thick] (2,1) arc(0:-180:0.5);
    		\draw[->,thick] (0,2) to (0,3);
    		\draw[->,thick] (1,2) arc(180:0:0.5);
    		\draw[dashed] (-1,0) to (3,0);
    		\draw[dashed] (-1,1) to (3,1);
    		\draw[dashed] (-1,2) to (3,2);
    		\draw[dashed] (-1,3) to (3,3);
    		\node at (2.7,1.5) {$n$-$1$};
    	\end{tikzpicture}
\end{center}
Applying the endomorphism to $1_{n}$ gives
    \begin{align*}
    	1_{n} &\mapsto \sum_{i = 1}^{n-1} s_i \cdots s_{n-2} \otimes s_{n-2} \cdots s_i \mapsto \sum_{i = 1}^{n-1} s_i \cdots s_{n-2} s_{n-1} \otimes s_{n-2} \cdots s_i \\
    	    &\mapsto \sum_{i = 1}^{n-1} s_i \cdots s_{n-2} s_{n-1} s_{n-2} \cdots s_i = \JM{n}
    \end{align*}
where the equality holds by \eqref{eqn-alternative-JM-pres}.
\end{proof}

\begin{lemma} \label{lemma-S-calculations}
Let $k \leq n$:
\begin{enumerate}
\item The diagram
\begin{center}
\begin{tikzpicture}

\draw[thick] (.9,-.5) arc (180:360:.6cm);
\draw[thick] (0,-.5) arc (180:360:1.5cm);
\draw[thick] (-.85,-.5) arc (180:360:2.4cm);

\node at (.9,-.5) {\arrowlines};
\node at (0,-.5) {\arrowlines};
\node at (-.85,-.5) {\arrowlines};

\node at (1.5,-.7) {\scriptsize{$n$-$k$}};
\node at (-.3,-1) {\scriptsize{$n$-$1$}};
\node at (-1.1,-1.2) {\scriptsize{$n$}};

\draw[thick,black] (.7,-.8) circle (.2mm);
\draw[thick,black] (.55,-.85) circle (.2mm);
\draw[thick,black] (.4,-.9) circle (.2mm);

\end{tikzpicture}
\end{center}
corresponds to the bimodule homomorphism $(n) \rightarrow (n)_{n-k}(n)$ which sends
\begin{equation*}
1_n \mapsto \sum_{g \in \speclcos{n}{n-k}} g \otimes g^{-1}.
\end{equation*}

\item Let $\mu \vdash k$ and $x_1, x_2 \in (n)$. The diagram
\begin{center}
\begin{tikzpicture}

\draw[thick] (2.3,-1)--(2.3,1);
\draw[thick] (1.4,-1)--(1.4,1);

\draw[fill=white,thick] (2.6,-.3) rectangle (1.1,.3);

\node at (2.3,.6) {\arrowlines};
\node at (1.4,.6) {\arrowlines};

\draw[thick,black] (2,.7) circle (.2mm);
\draw[thick,black] (1.8,.7) circle (.2mm);
\draw[thick,black] (1.6,.7) circle (.2mm);

\draw[thick,black] (4.8,.7) circle (.2mm);
\draw[thick,black] (5,.7) circle (.2mm);
\draw[thick,black] (5.2,.7) circle (.2mm);

\draw[thick,black] (3.2,0) circle (.2mm);
\draw[thick,black] (3.4,0) circle (.2mm);
\draw[thick,black] (3.6,0) circle (.2mm);

\draw[thick] (4.5,-1)--(4.5,1);
\draw[thick] (5.4,-1)--(5.4,1);

\node[rotate = 180] at (4.5,.6) {\arrowlines};
\node[rotate = 180] at (5.4,.6) {\arrowlines};


\node at (1.85,0) {$\mu$};









\node at (3.4,.5) {\scriptsize{$n$-$k$}};
\node at (0,.3) {\scriptsize{$n$}};
\node at (6.8,.3) {\scriptsize{$n$}};

\end{tikzpicture}
\end{center}
corresponds to the bimodule homomorphism $(n)_{n-k}(n) \rightarrow (n)_{n-k}(n)$ which sends
\begin{equation*}
x_1 \otimes x_2 \mapsto x_1\partitioncyclen{\mu}{n} \otimes x_2.
\end{equation*}
\end{enumerate}
\end{lemma}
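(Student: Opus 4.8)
The plan is to compute each map by following the image of a cyclic generator, exploiting the fact that every diagram in $\bimodcat$ is by construction a bimodule homomorphism and is therefore determined by its value on $1_n$ in part (1), or on $1_n \otimes 1_n$ in part (2). The only inputs needed are the explicit bimodule maps attached to the left cup \eqref{LCup} and the upward crossing \eqref{UCross}, together with the coset-representative structure of Lemma \ref{lemma-cosetreps}.

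For part (1), I would induct on $k$, for all $n$ simultaneously. The base case $k=0$ is the identity of $(n)$, with $\speclcos{n}{n} = \{1_n\}$. For the inductive step, I would isotope the nested-cup diagram so that the outermost cup is a single instance of \eqref{LCup} at level $n$, sending $1_n \mapsto \sum_{g \in \speclcos{n}{n-1}} g \otimes g^{-1}$, while the remaining $k-1$ cups, now living in the region labeled $n-1$, form an identical diagram one level down, namely the map $(n-1) \to (n-1)_{n-k}(n-1)$. By the inductive hypothesis this inner diagram sends $1_{n-1} \mapsto \sum_{h \in \speclcos{n-1}{n-k}} h \otimes h^{-1}$. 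Inserting this into the middle tensor factor replaces each $g \otimes g^{-1}$ by $\sum_h gh \otimes h^{-1}g^{-1}$, and the unique factorization $\speclcos{n}{n-k} = \speclcos{n}{n-1}\cdot \speclcos{n-1}{n-k}$ read off from Lemma \ref{lemma-cosetreps} (together with $(gh)^{-1} = h^{-1}g^{-1}$) collapses the double sum to $\sum_{w \in \speclcos{n}{n-k}} w \otimes w^{-1}$, as claimed.

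For part (2), I would first record the basic dictionary between crossings and Coxeter generators: by \eqref{UCross} and its level shifts, a single upward crossing of the two strands of the induction tower $(n)_{n-k}$ that correspond to the elements $j, j+1$ acts on the left factor by right multiplication by $s_j$. The box labeled $\mu$ is the image under $\SymtoHplus{n}$ of the permutation $\partitioncycle{\mu} \in \Sy{k}$, hence a product of such crossings, so applying it to $1_n$ in the left factor yields right multiplication by the corresponding product of adjacent transpositions in $\Sy{n}$. The one delicate point is the indexing: the outermost (leftmost) up-strand is the one adjoined last in the induction and so corresponds to the element $n$, the next to $n-1$, and so on, so the box-generator $s_i$ is realized by the $\Sy{n}$-generator $s_{n-i}$. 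This order reversal is precisely conjugation by the longest element, whence the product of transpositions equals $\longestelement{n}^{-1}\symembedding{k}{n}(\partitioncycle{\mu})\longestelement{n} = \partitioncyclen{\mu}{n}$. Since the down-strands act as the identity on the right factor, the diagram sends $1_n \otimes 1_n \mapsto \partitioncyclen{\mu}{n} \otimes 1_n$, and bimodule-linearity extends this to $x_1 \otimes x_2 \mapsto x_1 \partitioncyclen{\mu}{n} \otimes x_2$.

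The main obstacle in both parts is bookkeeping rather than anything conceptual. In part (1) one must verify carefully that the diagrammatic nesting genuinely corresponds to inserting the inner left cup into the middle tensor factor, and that the minimal coset representatives factor in the stated way. In part (2) the subtle step is pinning down the strand-to-element correspondence and confirming that its order-reversal matches exactly the $\longestelement{n}$-conjugation built into the definition of $\partitioncyclen{\mu}{n}$; both can be controlled by tracking a single cup or crossing at a time using \eqref{LCup} and \eqref{UCross}.
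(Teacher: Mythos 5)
Your proposal is correct and follows the same route as the paper, whose proof consists of the single line ``These follow from direct calculation using the definitions of cups, caps, and crossings''; you have simply carried out that calculation, organizing part (1) as an induction via the factorization $\speclcos{n}{n-k} = \speclcos{n}{n-1}\cdot\speclcos{n-1}{n-k}$ from Lemma \ref{lemma-cosetreps} and part (2) via the strand-to-index correspondence $s_i \mapsto s_{n-i}$, which matches the $\longestelement{n}$-conjugation in the definition of $\partitioncyclen{\mu}{n}$ (as one can sanity-check against the example $\partitioncycle{(3,2)} = s_4s_2s_1 \mapsto \partitioncyclen{(3,2)}{8} = s_4s_6s_7$). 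The bookkeeping points you flag are exactly the content the paper leaves implicit, and your handling of them is sound.
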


\begin{proof}
These follow from direct calculation using the definitions of cups, caps, and crossings.
\end{proof}

\omitt{
For $\lambda \in \partitions$, we define
\begin{equation*}
\ckimage{k}{n} := \htobimod{n}(c_k), \quad\quad \cktildeimage{k}{n} := \htobimod{n}(\tilde{c}_k), \quad\quad \text{and} \quad\quad \alpha_{\lambda,n} := \htobimod{n}(\alpha_\lambda).
\end{equation*} 
Observe that by \eqref{eqn-c_kn-ctilde_kn}, $\ckimage{0}{n} = \alpha_{1,n} = n$. 
}

\begin{lemma} \label{lemma-what-is-c_k-in-center}
As elements of $Z(\MB{C}[\Sy{n}])$:
\begin{enumerate}
\item \label{eqn-ck-value} $ \displaystyle \htobimod{n}(c_k) = \sum_{i = 1}^n s_i \cdots s_{n-1} \JM{n}^k s_{n-1} \cdots s_i,$
\item \label{eqn-cktilde-value} $\htobimod{n}(\tilde{c}_k) = \pr{n}(\JM{n+1}^k).$
\item \label{eqn-alpha-value} $\htobimod{n}(\alpha_\mu) = \begin{cases}
\classsum{\mu}{n} & \text{if $|\mu| \leq n$}\\
0 & \text{otherwise}.
\end{cases}$
\end{enumerate}
\end{lemma}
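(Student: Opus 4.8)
The plan is to treat all three parts uniformly: realize each closed diagram as a vertical composite of the elementary morphisms whose images under $\htobimod{n}$ are recorded in \eqref{RCap}--\eqref{UCross}, in Lemma \ref{lemma-right-curl-to-JM}, and in Lemma \ref{lemma-S-calculations}, and then follow $1_n$ through the composite. Since a closed diagram with outer region $n$ is a bimodule endomorphism of $(n)$, by Remark \ref{remark-closed-diagrams-and-center} it is determined by its value on $1_n$, and that value is the asserted central element. Labelling the outer region $n$, the orientations of the two circles force the region they enclose to be $n-1$ for $c_k$ and $n+1$ for $\tilde{c}_k$.

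For (1), cut the $c_k$-circle into a bottom cup, $k$ curls, and a top cap. The cup is the map \eqref{LCup} (with $n+1$ replaced by $n$), so $1_n \mapsto \sum_{i=1}^{n} s_i\cdots s_{n-1}\otimes s_{n-1}\cdots s_i \in (n)_{n-1}(n)$; since the loop runs between $\Sy{n}$ and $\Sy{n-1}$, Lemma \ref{lemma-right-curl-to-JM} identifies the $k$ curls with right multiplication by $\JM{n}^k$ on the left factor; and the cap \eqref{RCap} is the multiplication $x_1\otimes x_2\mapsto x_1x_2$. Composing yields $\sum_{i=1}^{n} s_i\cdots s_{n-1}\JM{n}^k s_{n-1}\cdots s_i$. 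The one thing to check is that inserting $\JM{n}^k$ on the left factor is well defined on the relative tensor product over $\MB{C}[\Sy{n-1}]$; this holds because $\JM{n}$ commutes with $\MB{C}[\Sy{n-1}]$. At $k=0$ each summand collapses to $1_n$, giving $n\cdot 1_n$ in agreement with \eqref{eqn-c_kn-ctilde_kn}.

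For (2), cut the $\tilde{c}_k$-circle the same way, now with enclosed region $n+1$. The cup is \eqref{RCup}, sending $1_n\mapsto 1_{n+1}$; the loop runs between $\Sy{n}$ and $\Sy{n+1}$, so the $k$ curls give right multiplication by $\JM{n+1}^k$ (Lemma \ref{lemma-right-curl-to-JM} with $n$ replaced by $n+1$); and the cap is the left cap \eqref{LCap}, which applies $\pr{n}$. The composite is $1_n\mapsto 1_{n+1}\mapsto \JM{n+1}^k\mapsto \pr{n}(\JM{n+1}^k)$, as claimed. This is consistent with the known values $\tilde{c}_0 = \pr{n}(1_{n+1}) = 1_n$ and $\tilde{c}_1 = \pr{n}(\JM{n+1}) = 0$.

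For (3), suppose first $|\mu| = k \le n$. Decompose $\alpha_\mu$ into the fork of Lemma \ref{lemma-S-calculations}(1), the box of cycle type $\mu$ of Lemma \ref{lemma-S-calculations}(2), and a closing cap that multiplies the two tensor factors. Tracking $1_n$ gives
\[
1_n \;\longmapsto\; \sum_{g\in\speclcos{n}{n-k}} g\otimes g^{-1} \;\longmapsto\; \sum_{g} g\,\partitioncyclen{\mu}{n}\otimes g^{-1} \;\longmapsto\; \sum_{g} g\,\partitioncyclen{\mu}{n}\,g^{-1},
\]
which is exactly $\classsum{\mu}{n}$ by Definition \ref{def-identity-of-a}. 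If instead $|\mu| > n$, the fork forces its innermost region to carry the label $n-k<0$, so the diagram is $0$. The only genuine difficulty in the whole argument is bookkeeping---keeping the region labels and cup/cap orientations straight---rather than anything conceptual; once the pieces are correctly matched to the maps of \eqref{RCap}--\eqref{UCross} and Lemmas \ref{lemma-right-curl-to-JM}--\ref{lemma-S-calculations}, each identity is a one-line composition.
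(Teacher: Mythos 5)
Your proposal is correct and follows the same route as the paper: the paper's proof simply defers parts (1)--(2) to the cup/cap definitions, Lemma \ref{lemma-right-curl-to-JM}, and Khovanov's Section 4, and obtains (3) by composing the maps of Lemma \ref{lemma-S-calculations} with nested clockwise caps, with the same vanishing argument when $|\mu|>n$. You have merely written out explicitly the region-labelling and the tracking of $1_n$ that the paper leaves to the reader, and all of those details check out.
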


\begin{proof}
\eqref{eqn-ck-value}-\eqref{eqn-cktilde-value} are found in \cite{Kho14} Section 4 and can be computed from the definitions of cups and caps and Lemma \ref{lemma-right-curl-to-JM}. \eqref{eqn-alpha-value} can be computed by composing the maps in Lemma \ref{lemma-S-calculations} with a sequence of $|\mu|$ nested clockwise oriented caps from \eqref{RCap}. When $|\mu| > n$ then $\htobimod{n}(\alpha_{\mu})$ will have its inner region labeled by $n-|\mu| < 0$ and will therefore be $0$.
\end{proof}

\omitt{
\begin{lemma} \label{lemma-alpha-maps-to}
Let $\mu \vdash k$, then as an element of $Z(\MB{C}[\Sy{n}])$,

\begin{center}
\begin{tikzpicture}

\node at (-5,0) {$\htobimod{n}(\alpha_\mu)$};

\node at (-3.5,0) {$=$};

\draw[thick] (0,0) circle (1.8 cm);
\draw[thick] (0,0) circle (1.6 cm);
\draw[thick] (0,0) circle (1 cm);

\draw[fill=white,thick] (-2,-.3) rectangle (-.5,.3);

\node at (-1,.5) {$\cdot$};
\node at (-1.15,.55) {$\cdot$};
\node at (-1.3,.6) {$\cdot$};

\node at (-1.3,0) {$\mu$};

\node at (0,0) {\scriptsize{$n$-$k$}};
\node at (2.6,.8) {\scriptsize{$n$}};

\node[rotate = -30] at (-1.5,1) {\arrowlines};
\node[rotate = -30] at (-1.3,.9) {\arrowlines};
\node[rotate = -30] at (-.8,.6) {\arrowlines};
\end{tikzpicture}
\end{center}
corresponds to the bimodule endomorphism of $(n)$ that sends 
\begin{equation*}
1_n \mapsto \classsum{\mu}{n} \in Z(\MB{C}[\Sy{n}]).
\end{equation*}
\end{lemma}

\begin{proof}
This can be computed by composing the maps in Lemma \ref{lemma-S-calculations} with a sequence of $n-k$ nested clockwise oriented caps from \eqref{RCap}.
\end{proof}}

\omitt{
\begin{proposition} \cite{Kho14} \label{prop-asymptotic-injective} 
The functors $\{\htobimod{n}\}_{n \geq 0}$ are asymptotically injective on $\Hcenter$, i.e.\begin{equation*}
\bigcap_{n \geq 0} \ker(\htobimod{n}|_{\Hcenter}) = \emptyset
\end{equation*}
\end{proposition}
}

\omitt{
Finally, we note that the functor $\htobimod{n}$ gives a relationship between the diagrams $\alpha_{k}$ arising as closures of $k$-cycles to the normalised conjugacy class sums of the $k$ cycle in the center of the group algebra.

\begin{lemma} \label{lemma-alpha-value}
For any $n, k \geq 0$, 
\begin{equation*}
\htobimod{n}(\alpha_{k}) = \begin{cases}
\classsum{k}{n} & \text{if $k < n$}\\
0 & \text{otherwise}.
\end{cases}
\end{equation*}
\end{lemma}

\begin{proof}
This follows directly from Lemma \ref{lemma-alpha-maps-to} and the observation that if $k > n$ then the $\htobimod{n}(\alpha_{k})$ will have its inner region labeled by $n-k < 0$ and will therefore be $\zero$.
\end{proof}} 

\section{The isomorphism $\primaryiso: \Hcenter \longrightarrow \ShiftSym{}$}
In this section we establish the algebra isomorphism $\Hcenter \cong \ShiftSym{}$. The proof is somewhat analogous to Ivanov and Kerov's proof of a related isomorphism connecting shifted symmetric functions to the representation theory of symmetric groups (see Theorem 9.1 in \cite{IK99}).


In \cite{Kho14} Section 4, Khovanov defines a grading on $\Hcenter$ by setting
\begin{equation} \label{eqn-disturb-def}
\deg(c_0) := 0, \quad \text{and} \quad \deg(c_k) = k+1, \quad \text{for $k \geq 1.$}
\end{equation}
We will consider the increasing filtration induced by this grading. A relationship between the elements $\{c_k\}_{k \geq 0}$ and $\{\alpha_k\}_{k \geq 1}$ is then given in terms of this filtration as follows.

\omitt{
and we denote the $k$th filtered part of $\Hcenter$ by $\disturb{k}(\Hcenter)$ so that
\begin{equation*}
\MB{C}[c_0] = \disturb{0}(\Hcenter) \subseteq \disturb{1}(\Hcenter) \subseteq \disturb{2}(\Hcenter) \subseteq \disturb{3}(\Hcenter) \dots
\end{equation*}

When $x \in \Hcenter$ we write 
\begin{equation*}
\disturbdeg(x) = k \quad \Longleftrightarrow \quad x \in \disturb{k}(\Hcenter) \;\;\; \text{but} \;\;\; x \notin \disturb{k-1}(\Hcenter).
\end{equation*}
} 

\begin{proposition} \label{lemma-highest-degree-term}
For any $k \geq 1$,
\begin{equation*}
\alpha_{k} = c_{k-1} + \loworderterms
\end{equation*}
\end{proposition}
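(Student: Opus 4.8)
The plan is to verify the identity in each center $Z(\MB{C}[\Sy{n}])$ by applying the functors $\htobimod{n}$, and then to lift it back to $\Hcenter$, in the spirit of Ivanov--Kerov \cite{IK99}. By Theorem \ref{Thm-End-Iso} the difference $\alpha_k - c_{k-1}$ is \emph{some} polynomial in the $c_j$, so everything reduces to identifying it; and since the family $\{\htobimod{n}\}$ is the bridge between $\Hcenter$ and symmetric groups, I would compute both sides there using Lemma \ref{lemma-what-is-c_k-in-center}, which gives $\htobimod{n}(\alpha_k) = \classsum{(k)}{n}$ for $n \geq k$ and $\htobimod{n}(c_{k-1}) = \sum_{i=1}^{n} s_i \cdots s_{n-1}\,\JM{n}^{k-1}\, s_{n-1}\cdots s_i$.

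The first step is to simplify $\htobimod{n}(c_{k-1})$. Since $\JM{n} = \sum_{a < n}(a,n)$ is invariant under conjugation by $\Sy{n-1}$, and the elements $s_i \cdots s_{n-1}$ are exactly the minimal-length left coset representatives of $\Sy{n-1}$ in $\Sy{n}$ (the set $\speclcos{n}{n-1}$ of Lemma \ref{lemma-cosetreps}), an averaging argument rewrites this as $\htobimod{n}(c_{k-1}) = \sum_{m=1}^{n} T_m^{\,k-1}$, where $T_m := \sum_{a \neq m}(a,m)$ is the sum of all transpositions through $m$. Expanding $T_m^{\,k-1}$ into ordered products $(a_1,m)(a_2,m)\cdots(a_{k-1},m)$, the terms with $a_1,\dots,a_{k-1}$ pairwise distinct are precisely those equal to a single $k$-cycle on $\{m,a_1,\dots,a_{k-1}\}$; a standard count of minimal star-factorizations shows each $k$-cycle arises exactly $k$ times (once per choice of base point $m$), so these contribute $k\,\congclasssum{(k)}{n} = \classsum{(k)}{n}$ by Example \ref{ex-single-cycle}. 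The terms with a coincidence among the $a_j$ are supported on fewer than $k$ points and hence lie in the span of class sums of strictly smaller cycle types, giving
\[
\htobimod{n}(c_{k-1}) \;=\; \htobimod{n}(\alpha_k) \;+\; \sum_{|\mu| < k} d_\mu\,\classsum{\mu}{n}.
\]

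I expect the main obstacle to be showing that the coefficients $d_\mu$ are \emph{independent of $n$}, which is exactly the content of the Ivanov--Kerov theory of partial permutations \cite{IK99}: the normalized class sums $\classsum{\mu}{n}$ have $n$-independent structure constants, so that each coincidence pattern contributes $n$-independently to $\classsum{\mu}{n}$. Granting this, and using $\htobimod{n}(\alpha_\mu) = \classsum{\mu}{n}$ from Lemma \ref{lemma-what-is-c_k-in-center}, the displayed identity holds for all $n$ with the same coefficients; combined with the (asymptotic) faithfulness of $\{\htobimod{n}\}$ on the finitely many elements $\{\alpha_\mu : |\mu| \leq k\}$ involved, it lifts to $c_{k-1} = \alpha_k + \sum_{|\mu| < k} d_\mu\,\alpha_\mu$ in $\Hcenter$ itself.

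Finally I would close by induction on $k$. The base case $k=1$ is immediate since $\alpha_1 = c_0$. Because the closure of a permutation factors over its disjoint cycles, $\alpha_\mu = \prod_i \alpha_{\mu_i}$, so the inductive hypothesis $\alpha_j = c_{j-1} + \loworderterms$ for $j < k$ yields, with respect to the filtration \eqref{eqn-disturb-def}, that $\deg \alpha_\mu = |\mu|$; in particular every $\alpha_\mu$ appearing above with $|\mu| < k$ is of strictly lower order than $c_{k-1}$, which has degree $k$. Rearranging the lifted identity then gives $\alpha_k = c_{k-1} + \loworderterms$, as claimed.
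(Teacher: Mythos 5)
Your argument is correct in outline but takes a genuinely different route from the paper. The paper's proof is a short, purely diagrammatic induction: one unravels the closed $k$-cycle using the dot-sliding relations \eqref{k dots 1}--\eqref{k dots 2} and the bubble move \eqref{bubble clockwise}, observing that each application produces one term in the same filtered piece plus terms of strictly lower degree, so that $\alpha_k$ collapses to $c_{k-1}+\loworderterms$ without ever leaving $\Hcenter$. You instead push everything through the functors $\htobimod{n}$ and compute in $Z(\MB{C}[\Sy{n}])$. Your computation there is sound, and in fact the step you flag as the main obstacle is easier than you suggest: grouping the monomials of $\sum_m T_m^{k-1}$ by the coincidence pattern of the tuple $(m,a_1,\dots,a_{k-1})$ turns each pattern into a sum over \emph{distinct} tuples, which by \eqref{eqn-alt-a} is literally a single normalized class sum $\classsum{\mu}{n}$ with $\mu$ determined by the pattern alone; the coefficients are then $n$-independent by inspection, with no need for the full Ivanov--Kerov partial-permutation machinery. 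What your approach buys is more refined information --- an explicit star-factorization formula for the lower-order coefficients $d_\mu$ --- and it anticipates the construction of $\primaryiso$; what it costs is locality and self-containment.

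The one point you must shore up is the lifting step. To pass from the identities in every $Z(\MB{C}[\Sy{n}])$ back to an identity in $\Hcenter$ you need the joint faithfulness $\bigcap_{n\ge 0}\ker\bigl(\htobimod{n}|_{\Hcenter}\bigr)=0$. This is true, but it is proved in \cite{Kho14} and is deliberately not invoked in this paper; you cannot borrow it from the paper's later results without circularity, since the injectivity of $\primaryiso$ in Theorem \ref{thm-main-1} rests on Corollary \ref{cor-end-generators}, which rests on the very proposition you are proving. So you must cite Khovanov's faithfulness statement explicitly as an external input (it is independent of Proposition \ref{lemma-highest-degree-term}, so no circle actually arises once you do). With that citation in place, and with your closing induction on $k$ (which correctly handles $\deg\alpha_\mu\le|\mu|<k$ via $\alpha_\mu=\prod_i\alpha_{\mu_i}$), the argument is complete.
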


\begin{proof}
This follows from repeated application of the dot sliding moves \eqref{k dots 1}-\eqref{k dots 2} and bubble sliding move \eqref{bubble clockwise}. Notice that with each application of these moves, we get a single term from the same filtered part plus additional terms of lower degree. 
\end{proof}

\omitt{

\begin{proof}
By Lemma \ref{lemma-alpha-value} and \eqref{eqn-what-is-c_n} for $n \geq k$, $\htobimod{n}(\alpha_{k}) = \classsum{k}{n}$ and $\htobimod{n}(c_{k-1}) = \classsum{k}{n} + \loworderterms$ so that
\begin{equation*}
\distsym(\htobimod{n}(\alpha_{k} - c_{k-1})) < k.
\end{equation*}
Proposition \ref{prop-F-and-degree} then implies that $\disturbdeg(\alpha_{k} - c_{k-1}) < k$. Since $\disturbdeg(\alpha_{k}) = \disturbdeg(c_{k-1}) = k$ the result follows.
\end{proof}

}
Since the elements $c_0, c_1, \dots$ are algebraically independent generators of $\Hcenter$, we immediately obtain the following.
\begin{corollary} \label{cor-end-generators}
The elements $\alpha_1, \alpha_2, \dots$ are algebraically independent generators of $\Hcenter$.
\end{corollary}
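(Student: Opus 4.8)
The plan is to deduce the corollary from Proposition~\ref{lemma-highest-degree-term} by a standard leading-term (triangularity) argument, using the grading \eqref{eqn-disturb-def} on $\Hcenter$. By Theorem~\ref{Thm-End-Iso} the generators $c_0,c_1,c_2,\dots$ are algebraically independent and $\Hcenter=\MB{C}[c_0,c_1,\dots]$ is a graded polynomial algebra, with $\deg c_0=0$ and $\deg c_{k}=k+1$ for $k\geq 1$. For a nonzero $x\in\Hcenter$ write $\ell(x)$ for its top-degree homogeneous component. Proposition~\ref{lemma-highest-degree-term} says precisely that $\ell(\alpha_k)=c_{k-1}$ for every $k\geq 1$, so passing from the $c$'s to the $\alpha$'s is a ``unitriangular'' change of generators with respect to the grading (note in particular that $\alpha_1=c_0$, since there is nothing of negative degree).

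First I would prove algebraic independence. Assign to a polynomial variable $X_k$ the weight $d_k:=\deg c_{k-1}$, and suppose $P(X_1,\dots,X_m)$ is a nonzero polynomial with $P(\alpha_1,\dots,\alpha_m)=0$. Let $P_{\mathrm{top}}$ be the component of $P$ of top weight $w_0$. Substituting $\alpha_k=c_{k-1}+(\text{lower degree})$ and collecting by degree, the highest-degree homogeneous component of $P(\alpha_1,\dots,\alpha_m)$ is exactly $P_{\mathrm{top}}(c_0,\dots,c_{m-1})$, which is homogeneous of degree $w_0$. Since $c_0,c_1,\dots$ are algebraically independent and $P_{\mathrm{top}}\neq 0$, this element is nonzero, so $P(\alpha_1,\dots,\alpha_m)\neq 0$, a contradiction.

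Next I would prove that the $\alpha_k$ generate $\Hcenter$, by induction on the top degree $d$ of an element $x\in\Hcenter$. The leading component $\ell(x)$ lies in the degree-$d$ part of $\Hcenter=\MB{C}[c_0,c_1,\dots]$, so it equals $Q(c_0,c_1,\dots)$ for a weighted-homogeneous polynomial $Q$ of weight $d$. Then $Q(\alpha_1,\alpha_2,\dots)$ has the same leading component $Q(c_0,c_1,\dots)=\ell(x)$, so $x-Q(\alpha_1,\alpha_2,\dots)$ has strictly smaller top degree and, by the inductive hypothesis, lies in the subalgebra generated by the $\alpha_k$; hence so does $x$. The base case $d=0$ is handled by $\MB{C}[c_0]=\MB{C}[\alpha_1]$.

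The only point requiring care, rather than a genuine obstacle, is that the grading is not connected: because $\deg c_0=0$, the degree-zero part $\MB{C}[c_0]$ is infinite dimensional and $X_1$ is a weight-zero variable. One should check that the leading-term argument is unaffected by this, which it is, since both the top-weight selection of $P_{\mathrm{top}}$ and the inductive decomposition use only that the degrees are nonnegative integers (so the induction is well founded) and that the grading is exhaustive. All of the essential content is already contained in Proposition~\ref{lemma-highest-degree-term}; the corollary is then purely formal.
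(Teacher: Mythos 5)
Your argument is correct and is exactly the route the paper takes: Corollary \ref{cor-end-generators} is deduced from Proposition \ref{lemma-highest-degree-term} together with the algebraic independence of the $c_k$ via the standard unitriangular change-of-generators argument with respect to the filtration \eqref{eqn-disturb-def}. The paper simply states that this is immediate, whereas you have written out the leading-term details (including the harmless degree-zero subtlety for $c_0$), so there is nothing to correct.
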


\omitt{
\begin{lemma} \label{lemma-only-cycle-type-matters}
Suppose that $g_1, g_2 \in \Sy{n}$ are conjugate, so that $\shape{g_1} = \shape{g_2}$. Then
\begin{center}
\begin{tikzpicture}

\node at (0,0) {\begin{tikzpicture}[scale = .8]

\draw[thick] (0,0) circle (1.8 cm);
\draw[thick] (0,0) circle (1.6 cm);
\draw[thick] (0,0) circle (1 cm);

\draw[fill=white,thick] (-2,-.3) rectangle (-.5,.3);

\node at (-1,.5) {$\cdot$};
\node at (-1.15,.55) {$\cdot$};
\node at (-1.3,.6) {$\cdot$};

\node at (-1.3,0) {$g_1$};

\node[rotate = -30] at (-1.5,1) {\arrowlines};
\node[rotate = -30] at (-1.3,.9) {\arrowlines};
\node[rotate = -30] at (-.8,.6) {\arrowlines};
\end{tikzpicture}};

\node at (2.5,0) {$=$};

\node at (5,0) {\begin{tikzpicture}[scale = .8]

\draw[thick] (0,0) circle (1.8 cm);
\draw[thick] (0,0) circle (1.6 cm);
\draw[thick] (0,0) circle (1 cm);

\draw[fill=white,thick] (-2,-.3) rectangle (-.5,.3);

\node at (-1,.5) {$\cdot$};
\node at (-1.15,.55) {$\cdot$};
\node at (-1.3,.6) {$\cdot$};

\node at (-1.3,0) {$g_2$};

\node[rotate = -30] at (-1.5,1) {\arrowlines};
\node[rotate = -30] at (-1.3,.9) {\arrowlines};
\node[rotate = -30] at (-.8,.6) {\arrowlines};
\end{tikzpicture}};

\end{tikzpicture}
\end{center}
\end{lemma}

\begin{proof}
Write $g_1 = hg_2h^{-1}$ for some $h \in \Sy{n}$. Slide $h$ around the diagram to get $g_2h^{-1}h = g_2$.
\end{proof}

\begin{proof}
Since $g_1$ and $g_2$ have the same cycle type, there is some $h \in \Sy{n}$ such that $g_1 = h^{-1}g_2h$. Then
\begin{center}
\begin{tikzpicture}

\node at (0,0) {\begin{tikzpicture}[scale = .8]

\draw[thick] (0,0) circle (1.8 cm);
\draw[thick] (0,0) circle (1.6 cm);
\draw[thick] (0,0) circle (1 cm);

\draw[fill=white,thick] (-2,-.3) rectangle (-.5,.3);

\node at (-1,.5) {$\cdot$};
\node at (-1.15,.55) {$\cdot$};
\node at (-1.3,.6) {$\cdot$};

\node at (-1.3,0) {$g_1$};

\node[rotate = -30] at (-1.5,1) {\arrowlines};
\node[rotate = -30] at (-1.3,.9) {\arrowlines};
\node[rotate = -30] at (-.8,.6) {\arrowlines};
\end{tikzpicture}};

\node at (5,0) {\begin{tikzpicture}[scale = .8]

\draw[thick] (-1.8,1) arc (180:0:1.8 cm);
\draw[thick] (-1.6,1) arc (180:0:1.6 cm);
\draw[thick] (-1,1) arc (180:0:1 cm);

\draw[thick] (-1.8,-1) arc (0:180:-1.8 cm);
\draw[thick] (-1.6,-1) arc (0:180:-1.6 cm);
\draw[thick] (-1,-1) arc (0:180:-1 cm);

\draw[thick,red,dotted] (2.25,-1) arc (0:-150:2.25 cm);
\draw[thick,red,dotted] (2.25,-1)--(2.25,1);
\draw[thick,red,dotted,->] (2.25,1) arc (0:160:2.25 cm);
\draw[thick,red,dotted] (-2,-2.1) to [out = 120, in = 260] (-2.2,-1.3);

\draw[thick] (-1.8,1)--(-1.8,-1);
\draw[thick] (-1.6,1)--(-1.6,-1);
\draw[thick] (-1,1)--(-1,-1);

\draw[thick] (1.8,1)--(1.8,-1);
\draw[thick] (1.6,1)--(1.6,-1);
\draw[thick] (1,1)--(1,-1);

\draw[fill=white,thick] (-2.1,.5) rectangle (-.6,1.1);
\draw[fill=white,thick] (-2.1,-.3) rectangle (-.6,.3);
\draw[fill=white,thick] (-2.1,-1.1) rectangle (-.6,-.5);

\node at (-1,1.5) {$\cdot$};
\node at (-1.15,1.55) {$\cdot$};
\node at (-1.3,1.6) {$\cdot$};

\node at (-1.3,.8) {$h^{-1}$};
\node at (-1.3,0) {$g_2$};
\node at (-1.3,-.8) {$h$};

\node[rotate = -30] at (-1.5,2) {\arrowlines};
\node[rotate = -30] at (-1.3,1.9) {\arrowlines};
\node[rotate = -30] at (-.8,1.6) {\arrowlines};
\end{tikzpicture}};

\node at (2.5,0) {$=$};

\end{tikzpicture}
\end{center}

\begin{center}
\begin{tikzpicture}

\node at (-2.5,0) {$=$};

\node at (0,0) {\begin{tikzpicture}[scale = .8]

\draw[thick] (-1.8,1) arc (180:0:1.8 cm);
\draw[thick] (-1.6,1) arc (180:0:1.6 cm);
\draw[thick] (-1,1) arc (180:0:1 cm);

\draw[thick] (-1.8,-1) arc (0:180:-1.8 cm);
\draw[thick] (-1.6,-1) arc (0:180:-1.6 cm);
\draw[thick] (-1,-1) arc (0:180:-1 cm);

\draw[thick] (-1.8,1)--(-1.8,-1);
\draw[thick] (-1.6,1)--(-1.6,-1);
\draw[thick] (-1,1)--(-1,-1);

\draw[thick] (1.8,1)--(1.8,-1);
\draw[thick] (1.6,1)--(1.6,-1);
\draw[thick] (1,1)--(1,-1);

\draw[fill=white,thick] (-2.1,.5) rectangle (-.6,1.1);
\draw[fill=white,thick] (-2.1,-.3) rectangle (-.6,.3);
\draw[fill=white,thick] (-2.1,-1.1) rectangle (-.6,-.5);

\node at (-1,1.5) {$\cdot$};
\node at (-1.15,1.55) {$\cdot$};
\node at (-1.3,1.6) {$\cdot$};

\node at (-1.3,.8) {$h$};
\node at (-1.3,0) {$h^{-1}$};
\node at (-1.3,-.8) {$g_2$};

\node[rotate = -30] at (-1.5,2) {\arrowlines};
\node[rotate = -30] at (-1.3,1.9) {\arrowlines};
\node[rotate = -30] at (-.8,1.6) {\arrowlines};
\end{tikzpicture}};

\node at (2.5,0) {$=$};

\node at (5,0) {\begin{tikzpicture}[scale = .8]

\draw[thick] (0,0) circle (1.8 cm);
\draw[thick] (0,0) circle (1.6 cm);
\draw[thick] (0,0) circle (1 cm);

\draw[fill=white,thick] (-2,-.3) rectangle (-.5,.3);

\node at (-1,.5) {$\cdot$};
\node at (-1.15,.55) {$\cdot$};
\node at (-1.3,.6) {$\cdot$};

\node at (-1.3,0) {$g_2$};

\node[rotate = -30] at (-1.5,1) {\arrowlines};
\node[rotate = -30] at (-1.3,.9) {\arrowlines};
\node[rotate = -30] at (-.8,.6) {\arrowlines};
\end{tikzpicture}};

\end{tikzpicture}
\end{center}
\end{proof}
}

For any $\lambda \vdash n$, composing $\htobimod{n}$ with the normalized character $\norcharrep{\lambda}$ gives a map

\begin{equation*}
(\norcharrep{\lambda} \circ \htobimod{n}): \Hcenter \rightarrow \MB{C}
\end{equation*}
and allows us to define a homomorphism $\primaryiso: \Hcenter \rightarrow \funonyd$. Specifically, for $x \in \Hcenter$, we write
\begin{equation*}
[\primaryiso (x)](\lambda) := (\norcharrep{\lambda} \circ \htobimod{n})(x).
\end{equation*}

Combining Lemma \ref{lemma-what-is-c_k-in-center}.\ref{eqn-alpha-value} with \eqref{prop-normalized-char-class-sum} implies that for $\mu \vdash k$
\begin{equation} \label{eqn-value-alpha}
[\primaryiso(\alpha_\mu)](\lambda) = \begin{cases}
\frac{(n \downharpoonright k)}{\dim\simplerep{\lambda}}\chi^{\lambda}(\mu) & \text{if $k \leq n$}\\
0 & \text{otherwise.}
\end{cases}
\end{equation} 

\begin{theorem} \label{thm-main-1}
The map $\primaryiso$ induces an algebra isomorphism $\Hcenter \rightarrow \ShiftSym{} \subseteq \funonyd$ with
\begin{equation*}
\alpha_\mu \xmapsto{\primaryiso} \shiftpwr{\mu}.
\end{equation*}
\end{theorem}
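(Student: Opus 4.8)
The plan is to show that $\primaryiso$ is an injective algebra homomorphism whose image is precisely $\ShiftSym{} \subseteq \funonyd$, and that it sends $\alpha_\mu \mapsto \shiftpwr{\mu}$. The bulk of the quantitative work has already been packaged into the earlier lemmas, so the argument is mostly a matter of assembling them.

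First I would check that $\primaryiso$ is an algebra homomorphism. Since $\funonyd$ carries pointwise multiplication, it is enough to verify that for each fixed $\lambda \vdash n$ the evaluation $x \mapsto [\primaryiso(x)](\lambda) = (\norcharrep{\lambda} \circ \htobimod{n})(x)$ is multiplicative, and this I would factor as a composite of two algebra homomorphisms. On one hand, restricting the functor $\htobimod{n}$ to $\Hcenter = \End_{\Heisencat}(\UnitModule)$ sends closed diagrams to bimodule endomorphisms of $(n)$; because the monoidal product of endomorphisms of the unit coincides with their composition, and composition of such bimodule endomorphisms corresponds to multiplication in the center under the identification of Remark \ref{remark-closed-diagrams-and-center}, the restriction $\htobimod{n}|_{\Hcenter} \colon \Hcenter \to Z(\MB{C}[\Sy{n}])$ is an algebra homomorphism. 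On the other hand, $\norcharrep{\lambda}$ restricted to $Z(\MB{C}[\Sy{n}])$ is an algebra homomorphism to $\MB{C}$ by Proposition \ref{prop-normchar-is-homomorphism}. Composing these gives multiplicativity of each evaluation, hence of $\primaryiso$.

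Next I would identify the images of the generators. The value $[\primaryiso(\alpha_\mu)](\lambda)$ is recorded in \eqref{eqn-value-alpha}, and comparing it termwise with the formula for $\shiftpwr{\mu}(\lambda)$ in Proposition \ref{prop-value-prwshift} shows the two functions on $\partitions$ agree at every $\lambda$ (noting $\chi^{\lambda} = \charrep{\lambda}$). Since an element of $\funonyd$ is determined by its values on all partitions, this yields $\primaryiso(\alpha_\mu) = \shiftpwr{\mu}$ in $\funonyd$, and in particular $\primaryiso(\alpha_k) = \shiftpwr{k}$ for each $k \geq 1$.

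Finally I would conclude. By Corollary \ref{cor-end-generators} the $\alpha_1, \alpha_2, \dots$ are algebraically independent generators of $\Hcenter$, so $\Hcenter$ is the polynomial algebra on them; applying the homomorphism $\primaryiso$ shows that its image is the subalgebra generated by $\shiftpwr{1}, \shiftpwr{2}, \dots$, which is all of $\ShiftSym{}$. For injectivity I would invoke the fact that the $\shiftpwr{k}$ are themselves algebraically independent generators of $\ShiftSym{}$, so that a nonzero polynomial relation among the $\alpha_k$ would force a nonzero relation among the $\shiftpwr{k}$, which is impossible; hence $\ker \primaryiso = 0$. This gives the algebra isomorphism $\primaryiso \colon \Hcenter \to \ShiftSym{}$ with $\alpha_\mu \mapsto \shiftpwr{\mu}$. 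I expect the only genuinely subtle point to be the bookkeeping in the first step — namely that juxtaposition of closed diagrams corresponds to composition of bimodule endomorphisms and hence to multiplication in $Z(\MB{C}[\Sy{n}])$ — since everything after that is a formal matching of two explicitly computed functions on partitions together with an algebraic-independence count.
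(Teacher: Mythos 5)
Your proposal is correct and follows essentially the same route as the paper's proof: factor each evaluation $(\norcharrep{\lambda}\circ\htobimod{n})$ as a composite of algebra homomorphisms, match $[\primaryiso(\alpha_\mu)](\lambda)$ from \eqref{eqn-value-alpha} against Proposition \ref{prop-value-prwshift}, and conclude via the algebraic independence of the generators $\{\alpha_k\}$ and $\{\shiftpwr{k}\}$. You simply spell out the homomorphism and injectivity/surjectivity bookkeeping in more detail than the paper does.
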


\begin{proof}
Let $\lambda \vdash n$. $\primaryiso$ is an algebra homomorphism because $\htobimod{n}$ is a homomorphism from $\Hcenter$ to $Z(\MB{C}[\Sy{n}])$ and $\norcharrep{\lambda}$ is a homomorphism when restricted to $Z(\MB{C}[\Sy{n}])$. By Proposition \ref{prop-value-prwshift} and \eqref{eqn-value-alpha}, $\alpha_\mu$ maps to $\shiftpwr{\mu}$. Since the $\{\shiftpwr{k}\}_{k \geq 1}$ (respectively $\{\alpha_k\}_{k \geq 1}$) are algebraically independent generators of $\ShiftSym{}$ (resp. $\Hcenter$), $\primaryiso$ must be an isomorphism.
\end{proof} 

Note that Theorem \ref{thm-main-1} along with Lemma \ref{lemma-only-cycle-type-matters} imply that when $\mu \vdash n$, 
\begin{equation} \label{eqn-image-of-cong-class}
\begin{tikzpicture}

\node at (0,0) {\begin{tikzpicture}[scale = .8]

\draw[thick] (0,0) circle (1.8 cm);
\draw[thick] (0,0) circle (1.6 cm);
\draw[thick] (0,0) circle (1 cm);

\draw[fill=white,thick] (-2,-.3) rectangle (-.5,.3);

\node at (-1,.5) {$\cdot$};
\node at (-1.15,.55) {$\cdot$};
\node at (-1.3,.6) {$\cdot$};

\node at (-1.3,0) {\small{\text{$\congclasssum{\mu}{n}$}}};

\node[rotate = -30] at (-1.5,1) {\arrowlines};
\node[rotate = -30] at (-1.3,.9) {\arrowlines};
\node[rotate = -30] at (-.8,.6) {\arrowlines};
\end{tikzpicture}};


\node at (2.2,0) {$=$};

\node at (4.5,0) {\begin{tikzpicture}[scale = .8]

\node at (-2.5,0) {$\frac{n!}{z_{\mu,n}}$};

\draw[thick] (0,0) circle (1.8 cm);
\draw[thick] (0,0) circle (1.6 cm);
\draw[thick] (0,0) circle (1 cm);

\draw[fill=white,thick] (-2,-.3) rectangle (-.5,.3);

\node at (-1,.5) {$\cdot$};
\node at (-1.15,.55) {$\cdot$};
\node at (-1.3,.6) {$\cdot$};

\node at (-1.3,0) {\small{$\lambda$}};

\node[rotate = -30] at (-1.5,1) {\arrowlines};
\node[rotate = -30] at (-1.3,.9) {\arrowlines};
\node[rotate = -30] at (-.8,.6) {\arrowlines};
\end{tikzpicture}};

\node at (7.95,.3) {$\primaryiso$};
\draw[thick, ->] (7,0)--(8.9,0);
\draw[thick] (7,.1)--(7,-.1);

\node at (10,0) {$\frac{n!}{z_{\mu,n}}\shiftpwr{\mu}.$};

\end{tikzpicture}
\end{equation}

For $\lambda \vdash n$ recall that $\idempotent{\lambda}$ is the Young idempotent associated to $\lambda$. 

\begin{theorem}\label{thm:schur}
The isomorphism $\primaryiso$ sends 
\vspace{3mm}
\begin{center}
\begin{tikzpicture}

\node at (-3,-.3) {$\dim \simplerep{\lambda}$};
\node at (-3,.3) {$1$};
\draw[thick] (-3.8,0) -- (-2.3,0);

\node at (6,0) {$\shiftschur{\lambda}.$};

\draw[thick,->] (3,0) -- (4.5,0);
\draw[thick] (3,-.1) -- (3,.1);
\node at (3.75,.3) {$\primaryiso$};

\draw[thick] (0,0) circle (1.8 cm);
\draw[thick] (0,0) circle (1.6 cm);
\draw[thick] (0,0) circle (1 cm);

\draw[fill=white,thick] (-2,-.3) rectangle (-.5,.3);

\node at (-1,.5) {$\cdot$};
\node at (-1.15,.55) {$\cdot$};
\node at (-1.3,.6) {$\cdot$};

\node at (-1.3,0) {$\Youngidempotent{\lambda}$};

\node[rotate = -30] at (-1.5,1) {\arrowlines};
\node[rotate = -30] at (-1.3,.9) {\arrowlines};
\node[rotate = -30] at (-.8,.6) {\arrowlines};
\end{tikzpicture}
\end{center}
\end{theorem}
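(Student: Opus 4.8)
The plan is to compute the function $\primaryiso\bigl(\tfrac{1}{\dim\simplerep{\lambda}}\,\alpha_{\Youngidempotent{\lambda}}\bigr)$ explicitly by expanding the closed diagram as a linear combination of the $\alpha_\nu$ and then invoking Theorem \ref{thm-main-1}. First I would extend the closure construction $\alpha_{(-)}$ linearly over $\MB{C}[\Sy{n}]$: for $x=\sum_{g\in\Sy{n}}a_g\,g$, let $\alpha_x\in\Hcenter$ denote the closure of $\SymtoHplus{n}(x)$, so that $\alpha_x=\sum_g a_g\,\alpha_g$. By Lemma \ref{lemma-only-cycle-type-matters} the diagram $\alpha_g$ depends only on $\shape{g}$, i.e. $\alpha_g=\alpha_{\shape{g}}$, so grouping group elements by cycle type gives
\begin{equation*}
\alpha_x=\sum_{\nu\vdash n}\Bigl(\sum_{g:\,\shape{g}=\nu}a_g\Bigr)\alpha_\nu .
\end{equation*}
In other words, closing a diagram only sees the image of $x$ under the projection onto the center recorded by class sums.

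Next I apply this with $x=\Youngidempotent{\lambda}=\tfrac{\dim\simplerep{\lambda}}{n!}\sum_{g\in\Sy{n}}\charrep{\lambda}(g)\,g$, the central primitive idempotent attached to $\simplerep{\lambda}$ (this is the reading of $\Youngidempotent{\lambda}$ forced by the prefactor $\tfrac1{\dim\simplerep{\lambda}}$; if instead $\Youngidempotent{\lambda}$ denotes a noncentral Young symmetrizer, Lemma \ref{lemma-only-cycle-type-matters} lets me replace it by its conjugation average, which is a scalar multiple of the central idempotent, without changing the closure). Each $g$ of cycle type $\nu$ contributes coefficient $\tfrac{\dim\simplerep{\lambda}}{n!}\charrep{\lambda}(\nu)$, and there are $n!/z_\nu$ such $g$, so the displayed formula yields
\begin{equation*}
\alpha_{\Youngidempotent{\lambda}}=\dim\simplerep{\lambda}\sum_{\nu\vdash n}\frac{\charrep{\lambda}(\nu)}{z_\nu}\,\alpha_\nu .
\end{equation*}
Applying the algebra isomorphism $\primaryiso$ and using $\primaryiso(\alpha_\nu)=\shiftpwr{\nu}$ from Theorem \ref{thm-main-1} gives
\begin{equation*}
\primaryiso\Bigl(\tfrac{1}{\dim\simplerep{\lambda}}\,\alpha_{\Youngidempotent{\lambda}}\Bigr)=\sum_{\nu\vdash n}\frac{\charrep{\lambda}(\nu)}{z_\nu}\,\shiftpwr{\nu}.
\end{equation*}

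Finally I would identify this sum with $\shiftschur{\lambda}$. Substituting the expansion \eqref{eqn-shiftpower-in-shiftschur}, namely $\shiftpwr{\nu}=\sum_{\mu\vdash n}\charrep{\mu}(\nu)\,\shiftschur{\mu}$, and applying the orthogonality relation $\sum_{\nu\vdash n}z_\nu^{-1}\charrep{\lambda}(\nu)\charrep{\mu}(\nu)=\delta_{\lambda\mu}$ for irreducible characters of $\Sy{n}$ collapses the double sum to $\sum_{\nu}z_\nu^{-1}\charrep{\lambda}(\nu)\shiftpwr{\nu}=\shiftschur{\lambda}$, completing the proof. The one place demanding care — and the only real obstacle — is the scalar bookkeeping: arranging that the factor $\dim\simplerep{\lambda}$ produced by the central idempotent cancels precisely against the prefactor $\tfrac1{\dim\simplerep{\lambda}}$ in the statement, and correspondingly pinning down which normalization of $\Youngidempotent{\lambda}$ is intended. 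Everything else is a formal consequence of linearity of the closure, Lemma \ref{lemma-only-cycle-type-matters}, Theorem \ref{thm-main-1}, and classical character orthogonality.
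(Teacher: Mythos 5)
Your proposal is correct and follows essentially the same route as the paper: expand the closure of $\Youngidempotent{\lambda}$ over conjugacy classes (the paper packages this as the expansion of $\tfrac{1}{\dim\simplerep{\lambda}}\Youngidempotent{\lambda}$ in the class sums $\congclasssum{\mu}{n}$ together with \eqref{eqn-image-of-cong-class}, which rests on the same Lemma \ref{lemma-only-cycle-type-matters} and counting you use), apply $\primaryiso(\alpha_\nu)=\shiftpwr{\nu}$ from Theorem \ref{thm-main-1}, and finish with character orthogonality to recognize $\sum_{\nu}z_{\nu}^{-1}\charrep{\lambda}(\nu)\shiftpwr{\nu}=\shiftschur{\lambda}$. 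Your extra care about the normalization of $\Youngidempotent{\lambda}$ (central idempotent versus conjugation-averaged Young symmetrizer) is a detail the paper glosses over, and your scalar bookkeeping checks out.
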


\begin{proof}
Recall that
\begin{equation*}
\Big(\frac{1}{\dim \simplerep{\lambda}}\Big)\idempotent{\lambda} = \sum_{\mu \vdash n} \frac{\charrep{\lambda}(\mu)}{n!} \congclasssum{\mu}{n},
\end{equation*}
while
\begin{equation*}
\shiftschur{\lambda} = \sum_{\mu \vdash n} \frac{\charrep{\lambda}(\mu)}{z_{\mu,n}}\shiftpwr{\mu}.
\end{equation*}
The result then follows from \eqref{eqn-image-of-cong-class}.
\end{proof}

The previous theorems gave graphical realizations of some important bases of $\ShiftSym{}$.  Now we go the other way, and describe Khovanov's curl generators 
$\ctilde{k}$ and $c_k$ as elements of $\ShiftSym{}$.  It is this description that makes an explicit connection between $\Heisencat$ and the transition and co-transition measures of Kerov.

\begin{theorem} \label{thm-moment-images}
The isomorphism $\primaryiso$ sends:
\begin{enumerate}
\item $\ctilde{k} \mapsto \moment{k} \in \ShiftSym{}$,
\item $c_k \mapsto \shiftpwr{1}\comoment{k} = \Boolean{k+2} \in \ShiftSym{}$.
\end{enumerate}
\end{theorem}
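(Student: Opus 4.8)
The plan is to evaluate both sides as functions on partitions and then appeal to the fact (Section \ref{sect-shiftsym-as-partition-functions}) that an element of $\ShiftSym{} \subseteq \funonyd$ is uniquely determined by its values on all $\lambda \in \partitions$. Since $\primaryiso$ is defined by $[\primaryiso(x)](\lambda) = (\norcharrep{\lambda} \circ \htobimod{n})(x)$ and its image is exactly $\ShiftSym{}$ by Theorem \ref{thm-main-1}, it suffices to compute $\norcharrep{\lambda}(\htobimod{n}(\ctilde{k}))$ and $\norcharrep{\lambda}(\htobimod{n}(c_k))$ for each $\lambda \vdash n$ and to match them with the functions $\moment{k}$ and $\Boolean{k+2}$, both of which already lie in $\ShiftSym{}$ by Proposition \ref{prop-moments-as-sym}.

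For part (1), I would first invoke Lemma \ref{lemma-what-is-c_k-in-center}.\ref{eqn-cktilde-value}, which identifies $\htobimod{n}(\ctilde{k})$ with the central element $\pr{n}(\JM{n+1}^k) \in Z(\MB{C}[\Sy{n}])$. Applying $\norcharrep{\lambda}$ and comparing with the first identity of Proposition \ref{prop-alt-definition-moments}, namely $\moment{k}(\lambda) = \norcharrep{\lambda}[\pr{n}(\JM{n+1}^k)]$, gives $[\primaryiso(\ctilde{k})](\lambda) = \moment{k}(\lambda)$ for every $\lambda$. Equality in $\ShiftSym{}$ then follows from the uniqueness just mentioned.

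For part (2), the argument is parallel but uses the other branch of Lemma \ref{lemma-what-is-c_k-in-center}: $\htobimod{n}(c_k) = \sum_{i=1}^n s_i \cdots s_{n-1}\JM{n}^k s_{n-1}\cdots s_i$. Applying $\norcharrep{\lambda}$ and invoking the second identity of Proposition \ref{prop-alt-definition-moments} yields $[\primaryiso(c_k)](\lambda) = |\lambda|\comoment{k}(\lambda) = \Boolean{k+2}(\lambda)$, where the final equality is Proposition \ref{prop-boolean-comoment}. To recognize this as $\shiftpwr{1}\comoment{k}$, I would use the fact recorded after Proposition \ref{prop-value-prwshift} that $\shiftpwr{1}(\lambda) = |\lambda|$, so that the function $\shiftpwr{1}\comoment{k}$ sends $\lambda \mapsto |\lambda|\comoment{k}(\lambda)$, agreeing pointwise with $\Boolean{k+2}$.

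Since the genuinely hard work is already packaged in Lemma \ref{lemma-what-is-c_k-in-center} (the diagrammatic computation of the curls inside $Z(\MB{C}[\Sy{n}])$) and in Proposition \ref{prop-alt-definition-moments} (Biane's algebraic descriptions of the transition and co-transition moments), the proof itself amounts to chaining these identities together. Accordingly, the only point requiring real care is bookkeeping: confirming that the index shifts line up correctly --- $\ctilde{k}$ producing the $k$th transition moment, and $c_k$ producing the $(k+2)$nd Boolean cumulant through the $k$th co-transition moment --- and that pointwise agreement of functions on $\partitions$ legitimately upgrades to equality in $\ShiftSym{}$.
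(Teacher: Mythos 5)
Your proposal is correct and follows essentially the same route as the paper's proof: both apply Lemma \ref{lemma-what-is-c_k-in-center} to identify $\htobimod{n}(\ctilde{k})$ and $\htobimod{n}(c_k)$ inside $Z(\MB{C}[\Sy{n}])$, then evaluate with $\norcharrep{\lambda}$ and invoke Proposition \ref{prop-alt-definition-moments} together with $\shiftpwr{1}(\lambda)=|\lambda|$ and Proposition \ref{prop-boolean-comoment}. Your extra remark that pointwise agreement on $\partitions$ upgrades to equality in $\ShiftSym{}$ is a point the paper leaves implicit, but the argument is the same.
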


\begin{proof}
Let $\lambda \vdash n$, then from Lemma \ref{lemma-what-is-c_k-in-center} and Proposition \ref{prop-alt-definition-moments} we have
\begin{equation*}
[\primaryiso(\ctilde{k})](\lambda) = \norcharrep{\lambda}(\pr{n}(\JM{n+1}^k)) = \moment{k}(\lambda)
\end{equation*}
and
\begin{equation*}
[\primaryiso(c_{k})](\lambda) = \norcharrep{\lambda}\Big(\sum_{i = 1}^n s_i \cdots s_{n-1} \JM{n}^k s_{n-1} \cdots s_i \Big) = \shiftpwr{1}(\lambda)\comoment{k}(\lambda) = \Boolean{k+2}(\lambda).
\end{equation*}
\end{proof}

\begin{remark} \label{remark-FH}
In \cite{FH59}, Farahat and Higman used the inductive structure of symmetric groups to construct a $\MB{C}$-algebra known as the Farahat-Higman algebra $\FH_{\MB{C}}$ (see also Example 24, Section I.7, \cite{Mac15}). 
It follows from, for example \cite{IK99}, that there is an algebra isomorphism $\FH_{\MB{C}}\cong \ShiftSym{}$, and 
the functors $\htobimod{n}$ can also be used to give a direct isomorphism between $\Hcenter$ and $\FH_{\MB{C}}$. So in principle all of the appearances of shifted symmetric functions in the previous sections could be rephrased in the language of the Farahat-Higman algebra.
\end{remark}

\begin{remark} \label{remark-curl-recursive-relations}
Theorem \ref{thm-moment-images} and Remark \ref{remark-moments-as-sym} together imply that the recursive relationships for $\{\moment{k}\}$ and $\{\Boolean{k}\}$ in Remark \ref{rmk-recursive-for-boolean-moments} and $\{c_k\}$ and $\{\ctilde{k}\}$ in Lemma \ref{lemma-c-ctilde-rln} are both consequences of the well-known relationship between the elementary and homogeneous symmetric functions:
\begin{equation*}
\sum_{i = 0}^k (-1)^i e_ih_{n-i} = 0.
\end{equation*}
\end{remark}

\begin{example}
In $\ShiftSym{}$ we have $\shiftpwr{(2)}\shiftpwr{(2)} = \shiftpwr{(2,2)} + 4\shiftpwr{(3)} + 2\shiftpwr{(1,1)}$. In $\Hcenter$ the local relations can be used to compute the corresponding equation:
\begin{center}
\begin{tikzpicture}



\draw[thick] (0,0) arc (0:180:8mm);
\draw[thick] (-.28,0) arc (0:180:5mm);

\draw[thick] (2,0) arc (0:180:8mm);
\draw[thick] (1.71,0) arc (0:180:5mm);


\draw[thick] (-1.6,0) to [in = 90, out = 270] (-1.275,-.35);
\draw[thick] (-1.6,-.35) to [in = 270, out = 90] (-1.275,0);

\draw[thick] (0,0)--(0,-.35);
\draw[thick] (-.28,0)--(-.28,-.35);

\draw[thick] (.4,0) to [in = 90, out = 270] (.71,-.32);
\draw[thick] (.4,-.32) to [in = 270, out = 90] (.71,0);

\draw[thick] (2,0)--(2,-.35);
\draw[thick] (1.71,0)--(1.71,-.35);


\draw[thick] (0,-.3) arc (180:0:-8mm);
\draw[thick] (-.28,-.3) arc (180:0:-5mm);

\draw[thick] (2,-.3) arc (180:0:-8mm);
\draw[thick] (1.71,-.3) arc (180:0:-5mm);


\node at (2.8,-.1) {$=$};



\draw[thick] (6,0) arc (0:180:13mm);
\draw[thick] (5.7,0) arc (0:180:10mm);

\draw[thick] (5.4,0) arc (0:180:7mm);
\draw[thick] (5.1,0) arc (0:180:4mm);


\draw[thick] (3.4,0) to [in = 90, out = 270] (3.7,-.35);
\draw[thick] (3.4,-.35) to [in = 270, out = 90] (3.7,0);

\draw[thick] (4,0) to [in = 90, out = 270] (4.3,-.35);
\draw[thick] (4,-.35) to [in = 270, out = 90] (4.3,0);

\draw[thick] (6,0)--(6,-.3);
\draw[thick] (5.7,0)--(5.7,-.3);
\draw[thick] (5.4,0)--(5.4,-.3);
\draw[thick] (5.1,0)--(5.1,-.3);


\draw[thick] (6,-.3) arc (180:0:-13mm);
\draw[thick] (5.7,-.3) arc (180:0:-10mm);

\draw[thick] (5.4,-.3) arc (180:0:-7mm);
\draw[thick] (5.1,-.3) arc (180:0:-4mm);


\node at (6.7,0) {\Large{$+$}};


\node at (7.5,0) {4};


\draw[thick] (10,0) arc (0:180:10mm);
\draw[thick] (9.7,0) arc (0:180:7mm);
\draw[thick] (9.4,0) arc (0:180:4mm);


\draw[thick] (8,0) to [out = 270, in = 90] (8.6,-.3);
\draw[thick] (8.3,0) to [out = 270, in = 90] (8,-.3);
\draw[thick] (8.6,0) to [out = 270, in = 90] (8.3,-.3);

\draw[thick] (10,0) -- (10,-.3);
\draw[thick] (9.7,0) -- (9.7,-.3);
\draw[thick] (9.4,0) -- (9.4,-.3);


\draw[thick] (10,-.3) arc (180:0:-10mm);
\draw[thick] (9.7,-.3) arc (180:0:-7mm);
\draw[thick] (9.4,-.3) arc (180:0:-4mm);


\node at (10.8,0) {$+$};


\node at (11.6,0) {$2$};


\draw[thick] (13.4,0) arc (0:180:7mm);
\draw[thick] (13.1,0) arc (0:180:4mm);


\draw[thick] (13.4,0) -- (13.4,-.3);
\draw[thick] (13.1,0) -- (13.1,-.3);
\draw[thick] (12.3,0) -- (12.3,-.3);
\draw[thick] (12,0) -- (12,-.3);


\draw[thick] (13.4,-.3) arc (180:0:-7mm);
\draw[thick] (13.1,-.3) arc (180:0:-4mm);


\node[rotate = 90] at (-.29,0) {\arrow};
\node[rotate = 90] at (0,0) {\arrow};
\node[rotate = 90] at (1.71,0) {\arrow};
\node[rotate = 90] at (2,0) {\arrow};
\node[rotate = 90] at (6,0) {\arrow};
\node[rotate = 90] at (5.7,0) {\arrow};
\node[rotate = 90] at (5.4,0) {\arrow};
\node[rotate = 90] at (5.1,0) {\arrow};
\node[rotate = 90] at (10,0) {\arrow};
\node[rotate = 90] at (9.7,0) {\arrow};
\node[rotate = 90] at (9.4,0) {\arrow};
\node[rotate = 90] at (13.4,0) {\arrow};
\node[rotate = 90] at (13.1,0) {\arrow};

\node at (13.5,-1) {.};


\end{tikzpicture}
\end{center}
\end{example}


\subsection{Involutions on $\Hcenter$} 

In \cite{Kho14}, Khovanov introduced three involutive autoequivalences on $\Heisencat$. Only one of these, which we denote as $\involution$, acts non-trivially on $\Hcenter$ where it gives an involutive algebra automorphism. For $D \in \Hom_{\Heisencat}(Q_{\epsilon_1}, Q_{\epsilon_2})$, we have
\begin{equation*}
\involution(D) := (-1)^{\crossingplusdots(D)}D
\end{equation*}
where $\crossingplusdots(D)$ is the total number of dots and crossings in the diagram. Thus, in $\Hcenter$:
\begin{flalign}
& c_k \quad \xmapsto{\quad \involution \quad} \quad (-1)^k c_k, \\
&\ctilde{k} \quad \xmapsto{\quad \involution \quad} \quad (-1)^k\ctilde{k}, \\
& \alpha_{k} \quad \xmapsto{\quad \involution \quad} \quad (-1)^{k-1}\alpha_{k}  \label{eqn-involution-on-pwr}.
\end{flalign}
In Section 4 of \cite{OO97}, Okounkov and Olshanski identified an involutive algebra automorphism \\$\shiftinvolution:\ShiftSym{} \rightarrow \ShiftSym{}$ which acts on $f \in \ShiftSym{}$ such that for $\lambda \in \partitions$,
\begin{equation*}
[\shiftinvolution(f)](\lambda) = f(\lambda'),
\end{equation*}
where $\lambda'$ is the conjugate partition to $\lambda$. In particular
\begin{flalign}
& \shiftinvolution(\shiftschur{\lambda}) = \shiftschur{\lambda'}, \\
& \shiftinvolution(\elementaryshift{k}) = \homogenshift{k}, \\
&\shiftinvolution(\shiftpwr{k}) = (-1)^{k-1}\shiftpwr{k} \label{eqn-involution-on-power}.
\end{flalign}

\begin{proposition}
The involution $\involution$ on $\Hcenter$ coincides with the involution $\shiftinvolution$ on $\ShiftSym{}$.
\end{proposition}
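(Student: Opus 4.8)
The plan is to reduce the claim to a computation on a generating set. Since $\primaryiso\colon\Hcenter\to\ShiftSym{}$ is an algebra isomorphism, the assertion that the two involutions coincide means precisely that $\primaryiso\circ\involution=\shiftinvolution\circ\primaryiso$ as maps $\Hcenter\to\ShiftSym{}$. Both $\involution$ and $\shiftinvolution$ are algebra automorphisms of their respective algebras, so each side of this equation is an algebra homomorphism $\Hcenter\to\ShiftSym{}$, and two algebra homomorphisms agree as soon as they agree on a set of algebra generators.

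First I would invoke Corollary \ref{cor-end-generators}, which tells us that the elements $\alpha_1,\alpha_2,\dots$ are algebraically independent generators of $\Hcenter$. It therefore suffices to verify the identity $\primaryiso(\involution(\alpha_k))=\shiftinvolution(\primaryiso(\alpha_k))$ for every $k\ge 1$.

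Next I would compute both sides using the formulas already recorded. On the one hand, by \eqref{eqn-involution-on-pwr} we have $\involution(\alpha_k)=(-1)^{k-1}\alpha_k$, and by Theorem \ref{thm-main-1} the isomorphism $\primaryiso$ sends $\alpha_k\mapsto\shiftpwr{k}$; hence the left-hand side equals $(-1)^{k-1}\shiftpwr{k}$. On the other hand, again using $\primaryiso(\alpha_k)=\shiftpwr{k}$ together with \eqref{eqn-involution-on-power}, the right-hand side equals $\shiftinvolution(\shiftpwr{k})=(-1)^{k-1}\shiftpwr{k}$. The two expressions coincide, which completes the verification on generators and hence establishes the proposition.

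There is essentially no serious obstacle here: the entire content is that both involutions act on the distinguished generators $\alpha_k$ (resp.\ $\shiftpwr{k}$) by the same sign $(-1)^{k-1}$, and that $\primaryiso$ matches these generators by Theorem \ref{thm-main-1}. The only point requiring care is conceptual rather than computational, namely the observation that because $\involution$, $\shiftinvolution$, and $\primaryiso$ are all algebra maps, agreement on the algebraically independent generators $\{\alpha_k\}_{k\ge 1}$ propagates to all of $\Hcenter$.
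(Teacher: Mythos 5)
Your argument is correct and is essentially identical to the paper's proof: both reduce to the algebraically independent generators $\{\alpha_k\}_{k\ge 1}$ via Corollary \ref{cor-end-generators}, then compare \eqref{eqn-involution-on-pwr} and \eqref{eqn-involution-on-power} using $\primaryiso(\alpha_k)=\shiftpwr{k}$ from Theorem \ref{thm-main-1}. No issues.
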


\begin{proof}
This follows from the fact that $\{\alpha_k\}_{k \geq 1}$, (respectively $\{\shiftpwr{k}\}_{k \geq 1}$) generate $\Hcenter$ (resp. $\ShiftSym{}$), $\primaryiso(\alpha_k) = \shiftpwr{k}$, and a comparison of \eqref{eqn-involution-on-pwr} and \eqref{eqn-involution-on-power}. 
\end{proof}


\subsection{A graphical construction of the action of $\Walgebra$ on $\ShiftSym{}$}

In \cite{CLLS15}, the trace  $\mbox{Tr}(\Heisencat)$ (or zeroth Hochschild homology) of $\Heisencat$ is shown to be isomorphic as an algebra to a quotient of the W-algebra $\Walgebra$.  Like the center 
$\Hcenter$, which is the algebra of closed planar diagrams, the trace $\mbox{Tr}(\Heisencat)$ has a purely graphical description, as the space of annular diagrams modulo Khovanov's local diagrammatic relations.  More precisely, 
the underlying vector space of $\mbox{Tr}(\Heisencat)$ is isomorphic to the span of annular diagrams, where an annular diagram $\tilde{f}$ is by definition a diagram obtained by taking an endomorphism $f\in \mbox{End}_{\Heisencat}(X)$ for some object $X\in \Heisencat$, and closing it up to the right in an annulus. The multiplication in $\mbox{Tr}(\Heisencat)$ is given by gluing annuli around one another:

\begin{center}
\begin{tikzpicture}[scale = .8]

\node at (.3,0) {\begin{tikzpicture}[scale = .4]



\draw[thick,blue,fill = lightblue] (27,-.4) ellipse (38mm and 41mm);




\draw[thick] (29.7,0) arc (0:180:27mm);
\draw[thick] (28.3,0) arc (0:180:15mm);


\draw[thick] (24.3,0) to [out = 270, in = 90] (25.3,-.8);
\draw[thick] (25.3,0) to [out = 270, in = 90] (24.3,-.8);

\draw[thick] (29.7,0) -- (29.7,-.8);
\draw[thick] (28.3,0) -- (28.3,-.8);


\draw[thick] (29.7,-.8) arc (180:0:-27mm);
\draw[thick] (28.3,-.8) arc (180:0:-15mm);

\node[rotate = 270,scale = 1.4] at (29.7,-.4) {\arrow};
\node[rotate = 270,scale = 1.4]  at (28.3,-.4) {\arrow};


\draw[fill = black] (25.3,.3) circle (1.2mm);


\draw[thick,blue,fill = white] (26.8,-.4) ellipse (6mm and 8mm);

\end{tikzpicture}};

\node at (2.7,0) {$\times$};

\node at (5,0) {\begin{tikzpicture}[scale = .4]



\draw[thick,blue,fill = lightblue] (27,-.4) ellipse (38mm and 41mm);




\draw[thick] (30,0) arc (0:180:30mm);
\draw[thick] (29,0) arc (0:180:21mm);
\draw[thick] (28,0) arc (0:180:12mm);


\draw[thick] (24,0) to [out = 270, in = 90] (25.6,-.8);
\draw[thick] (24.8,0) to [out = 270, in = 90] (24,-.8);
\draw[thick] (25.6,0) to [out = 270, in = 90] (24.8,-.8);

\draw[thick] (30,0) -- (30,-.8);
\draw[thick] (29,0) -- (29,-.8);
\draw[thick] (28,0) -- (28,-.8);


\draw[thick] (30,-.8) arc (180:0:-30mm);
\draw[thick] (29,-.8) arc (180:0:-21mm);
\draw[thick] (28,-.8) arc (180:0:-12mm);

\node[rotate = 90,scale = 1.4] at (30,-.4) {\arrow};
\node[rotate = 90,scale = 1.4]  at (29,-.4) {\arrow};
\node[rotate = 90,scale = 1.4]  at (28,-.4) {\arrow};


\draw[fill = black] (24.03,.3) circle (1.2mm);
\draw[fill = black] (24.1,.8) circle (1.2mm);
\draw[fill = black] (24.83,.3) circle (1.2mm);


\draw[thick,blue,fill = white] (26.8,-.4) ellipse (6mm and 8mm);

\end{tikzpicture}};

\node at (7.8,0) {$=$};

\node at (11.5,0) {\begin{tikzpicture}[scale = .4]



\draw[thick,blue,fill = lightblue] (27.2,-.4) ellipse (54.72mm and 59.04mm);




\draw[thick] (32,0) arc (0:180:48mm);
\draw[thick] (31,0) arc (0:180:39mm);
\draw[thick] (30,0) arc (0:180:30mm);
\draw[thick] (29,0) arc (0:180:21mm);
\draw[thick] (28,0) arc (0:180:12mm);


\draw[thick] (22.4,0) to [out = 270, in = 90] (23.2,-.8);
\draw[thick] (23.2,0) to [out = 270, in = 90] (22.4,-.8);
\draw[thick] (24,0) to [out = 270, in = 90] (25.6,-.8);
\draw[thick] (24.8,0) to [out = 270, in = 90] (24,-.8);
\draw[thick] (25.6,0) to [out = 270, in = 90] (24.8,-.8);

\draw[thick] (32,0) -- (32,-.8);
\draw[thick] (31,0) -- (31,-.8);
\draw[thick] (30,0) -- (30,-.8);
\draw[thick] (29,0) -- (29,-.8);
\draw[thick] (28,0) -- (28,-.8);


\draw[thick] (32,-.8) arc (180:0:-48mm);
\draw[thick] (31,-.8) arc (180:0:-39mm);
\draw[thick] (30,-.8) arc (180:0:-30mm);
\draw[thick] (29,-.8) arc (180:0:-21mm);
\draw[thick] (28,-.8) arc (180:0:-12mm);

\node[rotate = 270,scale = 1.4] at (32,-.4) {\arrow};
\node[rotate = 270,scale = 1.4]  at (31,-.4) {\arrow};
\node[rotate = 90,scale = 1.4] at (30,-.4) {\arrow};
\node[rotate = 90,scale = 1.4]  at (29,-.4) {\arrow};
\node[rotate = 90,scale = 1.4]  at (28,-.4) {\arrow};


\draw[fill = black] (23.2,.3) circle (1.2mm);
\draw[fill = black] (24.03,.3) circle (1.2mm);
\draw[fill = black] (24.1,.8) circle (1.2mm);
\draw[fill = black] (24.83,.3) circle (1.2mm);


\draw[thick,blue,fill = white] (26.8,-.4) ellipse (6mm and 8mm);

\node at (33,-5) {.};

\end{tikzpicture}};

\end{tikzpicture}
\end{center}

The action of $\mbox{Tr}(\Heisencat)$ on $\Hcenter$ then acquires a graphical description: given an annular diagram $\tilde{f}\in \mbox{Tr}(\Heisencat)$ and a closed planar diagram $f\in \Hcenter$, the closed planar diagram $\tilde{f}g\in \Hcenter$ is given by inserting a planar neighborhood of the closed diagram $g$ into the middle of the annulus:

\begin{center}
\begin{tikzpicture}[scale = .8]

\node at (.3,0) {\begin{tikzpicture}[scale = .4]



\draw[thick,blue,fill = lightblue] (27,-.4) ellipse (38mm and 41mm);




\draw[thick] (29.7,0) arc (0:180:27mm);
\draw[thick] (28.3,0) arc (0:180:15mm);


\draw[thick] (24.3,0) to [out = 270, in = 90] (25.3,-.8);
\draw[thick] (25.3,0) to [out = 270, in = 90] (24.3,-.8);

\draw[thick] (29.7,0) -- (29.7,-.8);
\draw[thick] (28.3,0) -- (28.3,-.8);


\draw[thick] (29.7,-.8) arc (180:0:-27mm);
\draw[thick] (28.3,-.8) arc (180:0:-15mm);

\node[rotate = 270,scale = 1.4] at (29.7,-.4) {\arrow};
\node[rotate = 270,scale = 1.4]  at (28.3,-.4) {\arrow};


\draw[fill = black] (25.3,.3) circle (1.2mm);


\draw[thick,blue,fill = white] (26.8,-.4) ellipse (6mm and 8mm);

\end{tikzpicture}};

\node at (3,0) {$\times$};


\node at (5,0) {\begin{tikzpicture}[scale = .4]



\draw[thick,red,fill = lightred] (27,-.2) circle (30mm);




\draw[thick] (25.7,.8) circle (7mm);
\draw[thick] (28.3,.8) circle (7mm);
\draw[thick] (27,-1.5) circle (7mm);






\node[rotate = 90,scale = 1.4] at (27.7,-1.5) {\arrow};
\node[rotate = 90,scale = 1.4]  at (29,.8) {\arrow};
\node[rotate = 90,scale = 1.4]  at (26.4,.8) {\arrow};


\draw[fill = black] (25,1) circle (1.2mm);
\draw[fill = black] (26.35,-1.2) circle (1.2mm);
\draw[fill = black] (26.3,-1.6) circle (1.2mm);



\end{tikzpicture}};

\node at (7.8,0) {$=$};

\node at (11.5,0) {\begin{tikzpicture}[scale = .4]



\draw[thick,red,fill = lightred] (27.2,-.4) ellipse (49.25mm and 53.136mm);




\draw[thick] (31,0) arc (0:180:39mm);
\draw[thick] (30,0) arc (0:180:30mm);


\draw[thick] (23.2,0) to [out = 270, in = 90] (24,-.8);
\draw[thick] (24,0) to [out = 270, in = 90] (23.2,-.8);

\draw[thick] (31,0) -- (31,-.8);
\draw[thick] (30,0) -- (30,-.8);


\draw[thick] (31,-.8) arc (180:0:-39mm);
\draw[thick] (30,-.8) arc (180:0:-30mm);

\node[rotate = 270,scale = 1.4] at (30,-.4) {\arrow};
\node[rotate = 270,scale = 1.4]  at (31,-.4) {\arrow};


\draw[fill = black] (24,.3) circle (1.2mm);

\draw[thick] (25.7,.6) circle (7mm);
\draw[thick] (28.3,.6) circle (7mm);
\draw[thick] (27,-1.7) circle (7mm);

\node[rotate = 90,scale = 1.4] at (27.7,-1.7) {\arrow};
\node[rotate = 90,scale = 1.4]  at (29,.6) {\arrow};
\node[rotate = 90,scale = 1.4]  at (26.4,.6) {\arrow};

\draw[fill = black] (25,.8) circle (1.2mm);
\draw[fill = black] (26.35,-1.4) circle (1.2mm);
\draw[fill = black] (26.3,-1.8) circle (1.2mm);



\node at (33,-5) {.};

\end{tikzpicture}};

\end{tikzpicture}
\end{center}

Thus, via the isomorphisms
\[
	\Hcenter \cong \ShiftSym{},  \ \ \ \ \mbox{Tr}(\Heisencat) \cong \Walgebra
\]
of Theorem \ref{thm-main-1} and \cite{CLLS15}, respectively, we obtain a purely graphical construction of the action of $\Walgebra$ on $\ShiftSym{}$.  Such an action was first considered by Lascoux-Thibon in \cite{LT01}.

\bibliographystyle{amsplain}
\bibliography{HeisenbergRefs}

\end{document}